\theoremstyle{thmstyleone}%
\newtheorem{theorem}{Theorem}[section]
\newtheorem{lemma}{Lemma}[section]
\theoremstyle{thmstyletwo}%
\newtheorem{remark}{Remark}%
\theoremstyle{thmstylethree}%
\newtheorem{assumption}{Assumption}%
\newcommand{\R}{{\mathbb{R}}}
\newcommand{\BEQ}{\begin{equation}}
\newcommand{\EEQ}{\end{equation}}
\newcommand{\prox}{\mathbf{prox}}
\newcommand{\minimize}{\operatorname{minimize}}
\newcommand{\st}{\operatorname{subject~to}}
\newcommand{\mb}{\mathbf}
\begin{document}

\title[Primal-Dual Bundle Methods for Linear Equality-Constrained Problems]{Primal-Dual Bundle Methods for Linear Equality-Constrained Problems}


\author[1]{\fnm{Zhuoqing} \sur{Zheng}}\email{12433033@mail.sustech.edu.cn}
\author[1]{\fnm{Tao} \sur{Liu}}\email{liut6@sustech.edu.cn}

\author[1]{\fnm{Xuyang} \sur{Wu}}\email{wuxy6@sustech.edu.cn}

\affil[1]{\orgdiv{School of Automation and Intelligent Manufacturing}, \orgname{Southern University of Science and Technology}, \orgaddress{\street{1088 Xueyuan Avenue}, \city{Shenzhen}, \postcode{518055}, \state{Guangdong}, \country{China}}}


\abstract{Dual ascent (DA) and the method of multipliers (MM) are fundamental methods for solving linear equality-constrained convex optimization problems, and their dual updates can be viewed as the minimization of a proximal linear surrogate function of the negative Lagrange dual and augmented Lagrange dual function, respectively. However, the proximal linear surrogate function may suffer from low approximation accuracy, which leads to slow convergence of DA and MM. To accelerate their convergence, we adapt the proximal bundle surrogate framework that can incorporate a list of more accurate surrogate functions, to both the primal and the dual updates of DA and MM, leading to a family of novel primal-dual bundle methods. Our methods generalize the primal-dual gradient method, DA, the linearized MM, and MM. Under standard assumptions that allow for a broad range of surrogate functions, we prove theoretical convergence guarantees for the proposed methods. Numerical experiments demonstrate that our methods converge not only faster, but also significantly more robust with respect to the parameters compared to the primal-dual gradient method, DA, the linearized MM, and MM.}

\keywords{Bundle method, Primal-dual method, Linear equality-constrained problem, Model-based optimization}



\maketitle

\section{Introduction}\label{sec1}
This article addresses linear equality-constrained composite optimization problems:
\begin{equation}\label{problem}
\begin{split}
    \underset{x\in\R^n}{\minimize}~~~ & F(x) \overset{\triangle}{=} f(x)  + h(x)\\
    \st  ~~&Ax=b 
\end{split}
\end{equation}
where $A \in \mathbb{R}^{m \times n}$, $b \in \mathbb{R}^m$, $f:\mathbb{R}^n\rightarrow\mathbb{R}$ is convex and differentiable, and $h:\mathbb{R}^n\rightarrow\mathbb{R}\cup\{+\infty\}$ is convex but possibly non-differentiable. This 
problem has found numerous applications in machine learning \cite{wright2022optimization,sergio2015reinforement}, smart grid \cite{yang2016distributed}, and distributed optimization \cite{wu2023unifyMM}. 


A large number of algorithms have been proposed to solve problem \eqref{problem}, including the penalty-function-based methods \cite{mayne1979first,zangwill1967non,conn1973constrained,lan2013iteration}, projection-based methods \cite{levitin1966constrained,goldstein1964convex,bertsekas1976goldstein,bertsekas1982projected}, and the primal-dual methods \cite{polyak1970iterative,boyd2011ADMM,bertsekas2014constrained,He2012,yashtini2022convergence,malitsky2018first,chambolle2011first,luo2008convergence,Rocka1976proximalpoint,chen1994proximal,xu2017accelerated,douglas1956numerical,eckstein1992douglas,ouyang2015accelerated,attouch2008augmented,xu2021iteration,liu2019nonergodic,alghunaim2020linear,xu2021first,rockafellar1976augmented,liao2025bundlebasedMM}. Among these, primal-dual methods have become the most popular approaches, which solve problem \eqref{problem} by solving its Lagrange dual problem.
Two fundamental primal-dual methods are dual ascent (DA) and the method of multipliers (MM) \cite{boyd2011ADMM,Rocka1976proximalpoint,xu2021iteration,liu2019nonergodic}, which apply steepest descent and the proximal point method to minimizing the negative dual function of problem \eqref{problem}, respectively. Notably, DA requires strong convexity of $f$ and MM eliminates this requirement. To reduce the computational complexity per iteration or to accelerate the convergence, several variants of DA and MM were proposed. For example, the primal-dual gradient method \cite{alghunaim2020linear} and the linearized method of multipliers (linearized MM) \cite{xu2017accelerated,xu2021first} replaces $f$ in the primal update of DA and MM with a simple proximal-linear approximation, respectively; The proximal method of multipliers (PMM) \cite{chen1994proximal,rockafellar1976augmented} adds a weighted proximal term on the primal update, making the primal subproblem strongly convex and ensuring that the primal subproblem is decomposable over the variables; The alternating direction method of multipliers (ADMM) \cite{douglas1956numerical,eckstein1992douglas,boyd2011ADMM} simplifies the primal update by decomposing the minimization of $f$ and $h$ in MM. Other primal-dual methods include the proximal ADMM \cite{attouch2008augmented} and the method of quadratic approximation \cite{polyak1970iterative}, where the former adds a weighted proximal term to the primal update to make it separable and the latter incorporates second-order information of $f$ into the primal update of DA.

Many primal-dual algorithms for solving problem \eqref{problem} can be understood from the perspective of model-based optimization, in which the primal function or the dual function is approximated by a surrogate function (\emph{also called model}). To list a few, the update in DA takes the form of
\[v^{k+1} = v^k+\alpha \nabla q(v^k)\]
where $q(v) = \min_x F(x)+\langle v, Ax-b\rangle$ is the Lagrange dual function of problem \eqref{problem} and the update is equivalent to minimizing the model $-q(v^k)+\langle -\nabla q(v^k), v-v^k \rangle+\frac{1}{2\alpha}\|v-v^k\|^2$ of $-q(v)$. Similarly, the primal-dual gradient method \cite{alghunaim2020linear} solves problem \eqref{problem} with $h\equiv 0$ and takes the form of
\begin{align*}
    x^{k+1} &= x^k - \alpha \nabla_x L(x^k,v^k),\quad v^{k+1} = v^k + \alpha\nabla_v L(x^{k+1}, v^k)
\end{align*}
where $L(x,v) = f(x)+\langle v, Ax-b\rangle$ is the Lagrange function. For this method, the primal update minimizes the model $f(x^k)+\langle \nabla f(x^k), x-x^k\rangle+\frac{1}{2\alpha}\|x-x^k\|^2$ that approximates $f(x)$ in DA. In addition, the work \cite{polyak1970iterative} considers a second-order model that incorporates the Hessian information in the primal update of MM, and \cite{liao2025bundlebasedMM} applies a cutting-plane model in the dual update of DA.



    This paper adapts the proximal bundle model \cite{lemarechal1993convex,lemarechal1975extension,wolfe1975method} in unconstrained optimization to both the primal and dual updates in DA and MM, resulting in a family of primal-dual bundle methods (BDA and BMM). Our algorithms are motivated by the following facts: 1) DA and MM can be viewed as model-based methods with a proximal linear model (first-order Taylor expansion plus a proximal term) of the (augmented) Lagrange dual function; 2) a higher approximation accuracy of the model often leads to a faster convergence of the optimization methods \cite{asi2019importance}; and 3) the proximal bundle model yields higher approximation accuracy than the proximal linear model. Our methods adopt a \emph{general} bundle framework, which enables the use of several popular models, including the Polyak model, the two-cut model, the cutting-plane model, and the Polyak cutting-plane model. These models incorporate the problem parameters or the function value and gradient information at historical iterates and yield a higher approximation accuracy compared to those in most existing primal-dual methods. The main contributions are summarized as follows:
\begin{enumerate}[1)]
    \item We propose a family of novel primal-dual bundle methods, which adapt the proximal bundle model, typically used for unconstrained optimization, to DA and MM to accelerate their convergence.
    \item We theoretically analyse the convergence of our methods under standard assumptions that allow for a broad range of surrogate models.  
    \item We demonstrate the practical advantages of our methods by numerical experiments. Compared to alternative methods, our methods converge not only faster, but also more robustly in the parameters.
\end{enumerate}
An early effort with a similar idea is \citep{liao2025bundlebasedMM}, which applies the proximal bundle model in the dual update of DA to solve linear equality-constrained problems with the linear objective function. In contrast, our methods handle general convex objective functions, apply the proximal bundle model to both the primal and dual updates, and consider both DA and MM.

The remaining part of this paper is organized as follows: Section \ref{algorithm development} develops the algorithms and Section \ref{sec:conv_ana} analyses the convergence of the proposed methods. Section \ref{Section numerical experiment} illustrates the practical advantages of our methods through
numerical experiments, and Section \ref{sec:conclusion} concludes the paper.

\textbf{Notation and Definition.} We denote by $\langle \cdot,\cdot \rangle$ the Euclidean inner product and by $\|\cdot\|$ the Euclidean norm. For any $f:\mathbb{R}^n \rightarrow \mathbb{R}$, $\partial f(x) \subset \mathbb{R}^n$ represents its subdifferential at $x$. If $f$ is differentiable, then $\partial f(x)=\{\nabla f(x)\}$. For any $A\in \mathbb{R}^{m \times n}$, $\operatorname{Range}(A) = \{ y \in \mathbb{R}^m \mid y=Ax, \forall x \in \mathbb{R}^n\}$ and $\text{Null}(A) = \{ x \in \mathbb{R}^n \mid Ax=0\}$ represent its range and null space, respectively, and $\sigma_A$ is the smallest non-zero singular value of $A$.

For any function $f:\mathbb{R}^n \rightarrow \mathbb{R}\cup\{+\infty\}$, we say it is $\beta$-smooth for some $\beta>0$ if it is differentiable and
\[\|\nabla f(x)-\nabla f(y)\| \leq \beta\|x-y\|\quad \forall x,y \in \mathbb{R}^n\]
and it is $\mu$-strongly convex for some $\mu>0$ if
\[\langle g_x -g_y,x-y\rangle \geq \mu \|x-y\|^2\quad \forall x,y \in \mathbb{R}^n, ~ g_x \in \partial f(x), ~ g_y \in \partial f(y).\]

\section{Algorithm Development}\label{algorithm development}

This section extends the proximal bundle model that is typical in unconstrained optimization to accelerate the dual ascent method (DA) and the method of multipliers (MM) \cite{boyd2011ADMM}.

\subsection{Preliminaries: DA, MM, and the proximal bundle model}
\subsubsection{DA and MM}\label{review of DA MM...}

DA and MM are fundamental algorithms for solving linear equality-constrained problems, and several primal-dual methods can be viewed as their variants, such as the primal-dual gradient method \cite{alghunaim2020linear}, the linearized method of multipliers (linearized MM) \cite{xu2021first}, and the alternating direction method of multipliers (ADMM) \cite{boyd2011ADMM}. DA and MM for solving problem \eqref{problem} take the following forms.

\begin{itemize}
\item Dual Ascent (DA): At each iteration $k\ge 0$, 
\begin{align}
x^{k+1} &= \underset{x}{\operatorname{\arg\;\min}}~ L(x,v^k)\label{eq:DA_x}\\
v^{k+1} &= v^k + \alpha (Ax^{k+1}-b)\label{eq:DA_v}
\end{align}
    where $L(x,v)=F(x)+\langle v,Ax-b\rangle$ and the step-size $\alpha > 0$. DA applies steepest descent \cite{boyd2004convex} to minimizing the negative dual function and requires the strong convexity of $F$ to guarantee the convergence.

\item Method of Multipliers (MM): For some $\rho>0$ and at each iteration $k\ge 0$,
\begin{align}
    x^{k+1} &= \underset{x}{\operatorname{\arg\;\min}}~ L_\rho(x,v^k)\label{eq:MM_x}\\
    v^{k+1} &= v^k + \rho (Ax^{k+1}-b)\label{eq:MM_v}
\end{align}
where $L_\rho(x,v)=F(x)+ \langle v,Ax-b\rangle + \frac{\rho}{2}\|Ax-b\|^2$. MM is equivalent to applying the proximal point method \cite{Rocka1976proximalpoint}  to solving the dual problem of \eqref{problem}. Compared with DA, MM only
requires the objective function to be convex, rather than strongly convex, which significantly broadens the range of solvable problems.
\end{itemize}

Both DA and MM can be viewed from the perspective of model-based optimization. To see this, note that the update of DA and MM are equivalent to steepest descent on minimizing $-q(v)$ and $-q_\rho(v)$, respectively, where
\begin{align}\label{eq:dual_func}
    q(v) & = \min_x~L(x,v),\qquad q_\rho(v) = \min_x~L_\rho(x,v).
\end{align}
DA and MM can also be expressed as, for some $c_d > 0$,
\begin{align}
    v^{k+1} &= \underset{v}{\operatorname{\arg\;\min}} -q(v^k)-\langle \nabla q(v^k), v-v^k\rangle+\frac{c_d}{2}\|v-v^k\|^2\label{eq:DA_dual_model}\\
    v^{k+1} &= \underset{v}{\operatorname{\arg\;\min}} -q_\rho(v^k)-\langle \nabla q_\rho(v^k), v-v^k\rangle+\frac{c_d}{2}\|v-v^k\|^2\label{eq:MM_dual_model}
\end{align}
respectively, where the minimized proximal linear function is a surrogate function (\emph{also called model}) of $q$ in DA and $q_\rho$ in MM. The proximal linear surrogate function has a simple form, but it can lead to low approximation accuracy, which further slows down the convergence of the algorithms.

\subsubsection{The proximal bundle model}

The proximal bundle model is adopted in bundle methods \cite{lemarechal1993convex,lemarechal1975extension,wolfe1975method} for solving unconstrained optimization problems. At each iteration $k\ge 0$, the proximal bundle model of a function $\phi:\mathbb{R}^n\rightarrow\mathbb{R}\cup\{+\infty\}$ takes the form of 
\begin{equation}\label{eq:bundle}
    \hat{\phi}^k(x) + \frac{c}{2}\|x-x^k\|^2
\end{equation}
where $\hat{\phi}^k$ is an approximation (often a minorant) of $\phi$, $c\ge 0$ is a penalty parameter, and $x^k$ is the iterate at the $k$th iteration.

The proximal bundle model is explicitly or implicitly employed in many optimization methods. For example, when minimizing a function $\phi$, steepest descent and the proximal point method correspond to \eqref{eq:bundle} with $\hat{\phi}^k(x) = \phi(x^k)+\langle \nabla \phi(x^k), x-x^k\rangle$ and $\hat{\phi}^k(x)=\phi(x)$, respectively, the proximal bundle method in \cite{lemarechal1993convex} corresponds to \eqref{eq:bundle} with the cutting-plane model \cite{kelley1960cutting} where
\[\hat{\phi}^k(x) = \max_{t\in S^k}~\phi(x^t)+\langle\nabla \phi(x^t),x-x^t\rangle\]
for an index set $S^k\subseteq \{0,1,\ldots,k\}$. Other variants of bundle methods include the trust region-based bundle method and the relaxation-based bundle method \cite[Sec. XV.]{lemarechal1993convex}, which incorporates the cutting-plane model with the trust-region technique and a relaxation technique, respectively. The use of multiple cutting planes often yields a high approximation accuracy of $\hat{\phi}^k$ on $\phi$, and the resulting proximal bundle method not only converges fast but also exhibits extraordinary robustness in the parameter $c$. Due to this reason, a growing body of works extends the bundle method to solve optimization problems with different structures. For example, \cite{cederberg2025an,fischer2025asynchronous} propose asynchronous bundle methods to solve distributed optimization problems, \cite{liang2024single} develops a stochastic
composite proximal bundle method to solve stochastic optimization with composite objective functions, and \cite{hare2010redistributed,de2019proximal} apply the bundle method to solve non-convex problems with certain structures.

\subsection{Algorithm}

Inspired by the generality, quick convergence, and robustness of optimization methods with the proximal bundle model \eqref{eq:bundle}, we adapt it to the primal and dual updates in DA and MM, respectively.
\begin{itemize}
    \item \textbf{Dual bundle update}: By \eqref{eq:DA_dual_model}--\eqref{eq:MM_dual_model}, DA and MM take the form of the proximal bundle method \eqref{eq:bundle} where $\phi$ is replaced with $-q$ and $-q_\rho$, respectively. As a result, the straightforward bundle extensions of DA and MM are
    \begin{align}
        v^{k+1} = \underset{v}{\operatorname{\arg\;\min}} -\hat{q}^k(v)+\frac{c_d}{2}\|v-v^k\|^2\label{eq:BDA-D}\\
                v^{k+1} = \underset{v}{\operatorname{\arg\;\min}}-\hat{q}_\rho^k(v)+\frac{c_d}{2}\|v-v^k\|^2\label{eq:BMM-D}
    \end{align}
    respectively, where $\hat{q}^k$ and $\hat{q}_\rho^k$ are models of $q$ and $q_\rho$, respectively, and $c_d\ge 0$. Incorporating standard surrogate models such as the cutting-plane model into \eqref{eq:BDA-D} and \eqref{eq:BMM-D}, we can obtain novel variants of DA and MM.
    \item \textbf{Primal bundle update}: The models $\hat{q}^k$ and $\hat{q}_\rho^k$ in the dual bundle updates \eqref{eq:BDA-D}--\eqref{eq:BMM-D} often involve the function values or the gradients of $q$ and $q_\rho$ at $v_k$, which requires solving \eqref{eq:DA_x} and \eqref{eq:MM_x}. To reduce the computational cost, we adapt the bundle model to the primal updates \eqref{eq:DA_x} and \eqref{eq:MM_x}, leading to
    \begin{align}
        x^{k+1} &= \underset{x}{\operatorname{\arg\;\min}}~ \hat{F}^k(x)+\frac{c_p}{2}\|x-x^k\|^2+\langle v^k, Ax-b\rangle\label{eq:BDA-x}\\
        x^{k+1} &= \underset{x}{\operatorname{\arg\;\min}} ~\hat{F}^k(x)+\frac{c_p}{2}\|x-x^k\|^2+\langle v^k, Ax-b\rangle+\frac{\rho}{2}\|Ax-b\|^2\label{eq:BMM-x}
    \end{align}
    where $\hat{F}^k$ is a surrogate model of $F$.
\end{itemize}
We refer to the algorithm with the primal update \eqref{eq:BDA-x} and the dual update \eqref{eq:BDA-D} as the bundle dual ascent method (\textbf{BDA}), and the one with \eqref{eq:BMM-x} and \eqref{eq:BMM-D} as the bundle method of multipliers (\textbf{BMM}), which generalize several existing methods. In particular, when $h\equiv 0$, $\hat{F}^k(x)=f(x^k)+\langle \nabla f(x^k), x-x^k\rangle$, and $\hat{q}^k(v)=F(x^{k+1})+\langle Ax^{k+1}-b, v\rangle$, BDA reduces to the primal-dual gradient method \cite{alghunaim2020linear}, and when $\hat{F}^k(x)=f(x^k)+\langle \nabla f(x^k), x-x^k\rangle$ and $\hat{q}_\rho^k(v)=F(x^{k+1})+\langle Ax^{k+1}-b, v\rangle + \frac{\rho}{2}\|Ax^{k+1}-b\|^2$, BMM becomes the linearized MM \cite{xu2017accelerated,xu2021first}.

By letting the primal and dual models take specific forms in BDA and BMM, we obtain several variants. For BDA, we have
\begin{equation*}
    \begin{cases}
        \textbf{BDA-P:}\qquad &\text{primal update \eqref{eq:BDA-x} in BDA~}~+~ \text{dual update \eqref{eq:DA_v} in DA}\\
        \textbf{BDA-D:}\qquad &\text{primal update \eqref{eq:DA_x} in DA~}~+~ \text{dual update \eqref{eq:BDA-D} in BDA}
    \end{cases}
\end{equation*}
and for BMM, we have
\begin{equation*}
    \begin{cases}
        \textbf{BMM-P:}\qquad &\text{primal update \eqref{eq:BMM-x} in BMM~} ~+~ \text{dual update \eqref{eq:MM_v} in MM}\\
        \textbf{BMM-D:}\qquad &\text{primal update \eqref{eq:MM_x} in MM} ~+~ \text{dual update \eqref{eq:BMM-D} in BMM.}
    \end{cases}
\end{equation*}
Specifically, the -P methods adopt the same primal update as BDA and BMM and the same dual updates as DA and MM. The -D methods take the same primal updates as DA and MM, and the same dual update as BDA and BMM.

\begin{figure}[t]
\centering
    \begin{subfigure}[t]{0.26\linewidth}
        \includegraphics[width=\linewidth]{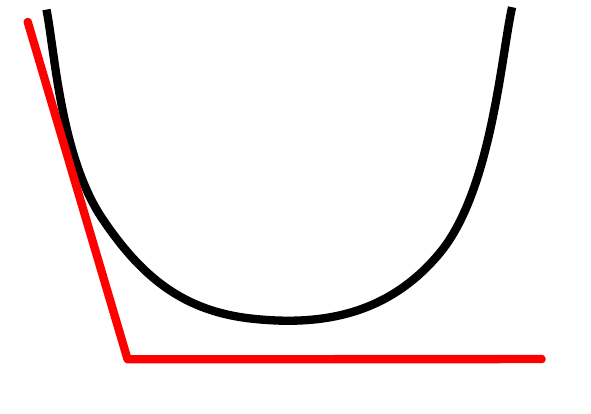}
        \caption{Polyak}
    \end{subfigure} 
    \hfill
    \begin{subfigure}[t]{0.26\linewidth}
        \includegraphics[width=\linewidth]{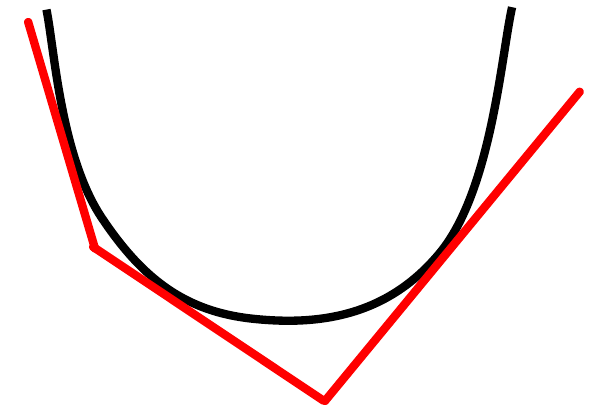}
        \caption{Cutting-plane}
    \end{subfigure} 
    \hfill
    \begin{subfigure}[t]{0.26\linewidth}
        \includegraphics[width=\linewidth]{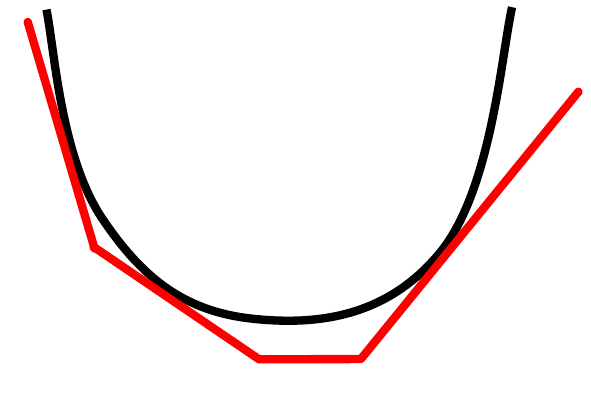}
        \caption{Polyak cutting-plane}
    \end{subfigure} 
    \caption{Surrogate functions in the Polyak model \eqref{eq:pol_model}, cutting-plane model \eqref{eq:cpm} and the Polyak cutting-plane model \eqref{eq:pcpm}}
    \label{fig:surrogate functions}
\end{figure}

\subsubsection{Candidates of the primal model and the dual model}\label{sssec:mod_cand}

We provide several choices for the primal and dual models in BDA and BMM. For the primal objective function $F$, we consider the primal model of the form $\hat{F}^k(x) = \hat{f}^k(x)+h(x)$, where $\hat{f}^k$ is a surrogate model of $f$ and a natural choice is $\hat{f}^k=f$. Below, we first provide four models for $f$, and then adapt them to the dual functions $q$ and $q_\rho$.

\bigskip

\textbf{Primal models for $f$}. We focus on convex models that take the maximum of affine functions, where each affine function is a minorant of $f$. Consequently, these models are also minorants of $f$. We visualize three of these models in Fig. \ref{fig:surrogate functions}.
    \begin{itemize}
        \item Polyak model: The model originates from the Polyak step-size \cite{polyak1987introduction} for steepest descent, which takes the following form when minimizing a function $f$:
    \[x^{k+1} = x^k - \alpha^k\nabla f(x^k),\qquad \alpha^k = \frac{f(x^k)-\ell_f}{\|\nabla f(x^k)\|^2}\]
    where $\ell_f$ is a lower bound of $\min_x f(x)$. In the update, $x^{k+1}$ is a minimizer of
    \begin{equation}\label{eq:pol_model}
        \hat{f}^k(x) = \max\{f(x^k)+\langle\nabla f(x^k), x-x^k\rangle, \ell_f\}
    \end{equation}
    which is referred to as the Polyak model of $f$.
        \item Cutting-plane model: It takes the maximum of historical cutting planes \cite{kelley1960cutting}:
        \begin{equation}\label{eq:cpm}
            \hat{f}^k (x) = \max_{t\in S^k}~f(x^t)+\langle \nabla f(x^t),x-x^t\rangle
        \end{equation}
        where $S^k\subseteq \{0,1,\ldots,k\}$ is a subset of historical iteration indexes.
        \item Polyak cutting-plane model: The model is designed as the maximum of the Polyak model and the cutting-plane model:
        \begin{equation}\label{eq:pcpm}
            \hat{f}^k (x) = \max\{\ell_f, \max_{t\in S^k}~f(x^t)+\langle \nabla f(x^t),x-x^t\rangle\}.
        \end{equation}
        \item Two-cut model: It takes the maximum of the cutting planes of $f$ and $\hat{f}^{k-1}$ at $x^k$:
        \begin{equation}\label{eq:two-cut}
            \hat{f}^k (x) = \max\{\hat{f}^{k-1}(x^k)+\langle \hat{g}^k, x-x^k\rangle,~f(x^k)+\langle \nabla f(x^k),x-x^k\rangle\}
        \end{equation}
        where $\hat{g}^k\in \partial \hat{f}^{k-1}(x^k)$ and $\hat{f}^0(x) = f(x^0)+\langle\nabla f(x^0), x-x^0\rangle$.
    \end{itemize}
    \begin{remark}
        All four models \eqref{eq:pol_model}--\eqref{eq:two-cut} take the maximum of minorants of $f$ and they are all majorants of the linear model $f(x^k)+\langle\nabla f(x^k), x-x^k\rangle$. Therefore, they achieve higher accuracy in approximating $f$ compared to the linear model
        \[f(x)-\hat{f}^k \le f(x) - (f(x^k)+\langle\nabla f(x^k), x-x^k\rangle).\]
        Similarly, the Polyak cutting-plane model yields higher approximation accuracy than the Polyak model and the cutting-plane model (see Fig. \ref{fig:surrogate functions}).
    \end{remark}

\textbf{Dual model for $q$.} Adapting the four models to $q$ requires both the function value and the gradient of $q$ at $v^k$, which requires solving \eqref{eq:DA_x} exactly. However, the primal update \eqref{eq:BDA-x} only offers an approximate solution. To address this issue, we propose to approximate $q(v^k)$ and its gradient using $x^{k+1}$ generated by the primal update of BDA. When $x^{k+1}\in \operatorname{\arg\;\min}_x~L(x,v^k)$, it follows that
\begin{equation}\label{eq:q_and_graq}
    q(v^k) = L(x^{k+1},v^k),\qquad \nabla q(v^k) = \nabla_v L(x^{k+1},v^k)
\end{equation}
and the cutting plane of $q$ at $v^k$ is
\[L(x^{k+1},v^k)+\langle \nabla_v L(x^{k+1},v^k), v-v^k\rangle.\]
Inspired by this, we construct the following approximate cutting plane of $q$ at $v^k$:
\begin{equation}\label{eq:Cqk}
    C_q^k(v) = L(x^{k+1},v^k)+\langle \nabla_v L(x^{k+1},v^k), v-v^k\rangle
\end{equation}
where $x^{k+1}$ is computed from \eqref{eq:BDA-x}. Let $u_q$ be a known upper bound of $\max_v q(v)$. Incorporating \eqref{eq:Cqk} into the four models \eqref{eq:pol_model}--\eqref{eq:two-cut}, we obtain
\begin{equation}\label{eq:model_q}
    \hat{q}^k(v) = \begin{cases}
        \min\{C_q^k(v), u_q\}, & \text{Polyak model}\\
        \min_{t\in S^k}~C_q^t(v), & \text{Cutting-plane model}\\
        \min\{u_q, \min_{t\in S^k}~C_q^t(v)\}, & \text{Polyak cutting-plane model}\\
        \min\{\hat{q}^{k-1}(v^k)+\langle \hat{g}_q^k, v-v^k\rangle , C_q^k(v)\}, & \text{Two-cut model}
    \end{cases}
\end{equation}
where $-\hat{g}_q^k\in \partial(-\hat{q}^{k-1})(v^k)$ and $\hat{q}^0(v)=C_q^0(v)$ in the two-cut model.
\bigskip

\textbf{Dual model for $q_\rho$}. We adapt the four models \eqref{eq:pol_model}--\eqref{eq:two-cut} to the augmented Lagrange dual function $q_\rho$ in the same way of adapting them to $q$. Similar to \eqref{eq:Cqk}, we construct the following approximate cutting-plane of $q_\rho$ at $v^k$ using $x^{k+1}$:
\begin{equation}\label{eq:Cq_rhok}
    C_{q,\rho}^k(v) = L_\rho(x^{k+1},v^k)+\langle \nabla_v L_\rho(x^{k+1},v^k), v-v^k\rangle
\end{equation}
where $L_\rho$ is the augmented Lagrangian. Let $u_{q,\rho}$ be a known upper bound of $\max_v q_\rho(v)$. Incorporating \eqref{eq:Cq_rhok} into the four models \eqref{eq:pol_model}--\eqref{eq:two-cut}, we obtain
\begin{equation}\label{eq:model_qrho}
    \hat{q}_\rho^k(v) = \begin{cases}
        \min\{C_{q,\rho}^k(v), u_{q,\rho}\}, & \text{Polyak model}\\
        \min_{t\in S^k}~C_{q,\rho}^t(v), & \text{Cutting-plane model}\\
        \min\{u_{q,\rho},\min_{t\in S^k}~C_{q,\rho}^t(v)\}, & \text{Polyak cutting-plane model}\\
        \min\{\hat{q}_\rho^{k-1}(v^k)+\langle \hat{g}_{q_\rho}^k, v-v^k\rangle, C_{q,\rho}^k(v)\}, & \text{Two-cut model}
    \end{cases}
\end{equation}
where $-\hat{g}_{q_\rho}^k\in \partial(-\hat{q}_\rho^{k-1})(v^k)$ and $\hat{q}_\rho^0(v)=C_{q,\rho}^0(v)$ in the two-cut model.

\subsubsection{Solving the subproblems in BDA and BMM}

When using the models in Section \ref{sssec:mod_cand}, the subproblems in BDA and BMM can be solved efficiently. To see this, note that with the models in Section \ref{sssec:mod_cand}, the subproblems in the primal updates of BDA and BMM can be simplified as
\begin{equation}\label{eq:simplified_primal_prob}
    \underset{x}{\operatorname{minimize}}~\max_{i\in\{1,\ldots,M\}}\{\tilde{a}_i^Tx+\tilde{b}_i\}+\frac{x^TCx}{2}+d^Tx+h(x)
\end{equation}
where $M$ is the number of affine functions in $\hat{f}^k$. Take cutting-plane model \eqref{eq:cpm} for example,
\begin{align*}
    & \tilde{a}_i = \nabla f(x^i),~~~~~ \tilde{b}_i = f(x^i) -\langle \nabla f(x^i),x^i \rangle\\
    & d = A^Tv^k-c_px^k,~~C=c_pI ~~~~\text{for BDA}\\
    & d = A^Tv^k-c_px^k-\rho A^Tb,~~C=c_pI+\rho A^TA ~~~~\text{for BMM}.
\end{align*}
Problem \eqref{eq:simplified_primal_prob} can be equivalently transformed into a linearly constrained problem
\begin{equation}\label{eq:QP}
    \begin{split}
        \underset{x,t}{\operatorname{minimize}}~~&~t+\frac{x^TCx}{2}+d^Tx+h(x)\\
        \operatorname{subject~to}~ &~\tilde{a}_i^Tx+\tilde{b}_i \le t,~i\in\{1,\ldots,M\}
    \end{split}
\end{equation}
which reduces to an easy-to-solve convex quadratic program when $h\equiv 0$. 

For problem \eqref{eq:QP}, it is preferable to solve it by dual approaches since the dimension $M$ of the dual variable is usually small (e.g., $M=5,10$) in practical implementations. To formulate the dual problem, we define
\[\tilde{A}=(\tilde{a}_1, \ldots, \tilde{a}_m)^T, \quad \tilde{b} = (\tilde{b}_1;\ldots;\tilde{b}_m)\]
and rewrite the constraint in \eqref{eq:QP} as
\[\tilde{A}x+\tilde{b}\le t\mb{1}.\]

\begin{lemma}\label{lem:subproblem_dual}
The Lagrange dual of problem \eqref{eq:QP} is
\begin{equation}\label{eq:subproblem_dual}
    \begin{split}
        \underset{\lambda\in\R^M}{\operatorname{maximize}}  ~~~& g(\lambda)\\
        \st ~~& \mb{1}^T\lambda =1 , ~\lambda \geq 0,
    \end{split}
\end{equation}
where
\begin{equation*}
    \begin{split}
        g(\lambda) 
        =&\inf_x \left\{ h(x) +\frac{1}{2}x^TCx +d^Tx + \lambda^T\tilde{A}x \right\}+\tilde{b}^T\lambda
    \end{split}
\end{equation*}
is the dual function. Letting $x_{\lambda} = \operatorname{\arg\;\min}_x h(x) +\frac{1}{2}x^TCx +d^Tx + \lambda^T\tilde{A}x$, it holds that
\[
\nabla g(\lambda) = \tilde{A}x_\lambda+b.\]
Furthermore, if $\lambda^\star$ is optimal to \eqref{eq:subproblem_dual}, then the unique optimal solution of problem \eqref{eq:simplified_primal_prob} is $x_{\lambda^\star}$.
\end{lemma}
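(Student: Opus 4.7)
The plan is to derive the dual via standard Lagrangian duality and then extract the gradient formula and primal recovery from the structure of the inner minimization. First, I would attach multipliers $\lambda_i \ge 0$ to the $M$ inequalities in \eqref{eq:QP} to form the Lagrangian
\begin{equation*}
L(x,t,\lambda) = t + \tfrac{1}{2}x^T C x + d^T x + h(x) + \sum_{i=1}^M \lambda_i(\tilde{a}_i^T x + \tilde{b}_i - t),
\end{equation*}
and regroup it as $L(x,t,\lambda) = (1-\mathbf{1}^T\lambda)\,t + h(x) + \tfrac{1}{2}x^T C x + (d + \tilde{A}^T\lambda)^T x + \tilde{b}^T\lambda$. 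Infimizing over $t\in\mathbb{R}$ forces $\mathbf{1}^T\lambda = 1$ for finiteness, and the remaining infimum over $x$ is exactly the expression $g(\lambda)$ stated in the lemma. Together with $\lambda\ge 0$, this produces the dual \eqref{eq:subproblem_dual}.

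Next, for the gradient formula, I would use that $C$ is positive definite in both instantiations of interest (either $c_p I$ or $c_p I + \rho A^T A$ with $c_p>0$), so together with convexity of $h$ the inner objective is strongly convex in $x$ for every $\lambda$. Hence the minimizer $x_\lambda$ is unique and the map $\lambda \mapsto x_\lambda$ is continuous. I would then invoke Danskin's theorem (equivalently the envelope theorem for value functions of strongly convex perturbed problems) to conclude that $g$ is differentiable and
\begin{equation*}
\nabla g(\lambda) \;=\; \nabla_\lambda L(x_\lambda,t,\lambda) \;=\; \tilde{A}x_\lambda + \tilde{b},
\end{equation*}
since the dependence of $L$ on $\lambda$ (after the $t$-reduction) is affine through the smooth term $\lambda^T(\tilde{A}x+\tilde{b})$.

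For the primal recovery statement, I would first verify Slater's condition: picking any $x_0$ and choosing $t_0$ strictly larger than $\max_i \tilde{a}_i^T x_0 + \tilde{b}_i$ gives a strictly feasible point, so strong duality holds. Given a dual optimum $\lambda^\star$, the pair $(x_{\lambda^\star},\,t^\star)$ with $t^\star = \max_i \tilde{a}_i^T x_{\lambda^\star} + \tilde{b}_i$ is primal feasible, and the complementary-slackness-style computation shows that its primal objective equals $g(\lambda^\star)$, so it attains the primal optimum. Uniqueness of $x_{\lambda^\star}$ as the primal minimizer again follows from strong convexity of the objective of \eqref{eq:simplified_primal_prob} in $x$.

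The main obstacle I anticipate is justifying differentiability of $g$ cleanly when $h$ is nonsmooth, since a naive application of Danskin assumes smoothness of the inner objective. The resolution is that only uniqueness of the inner minimizer is needed for differentiability of the value function, and the nonsmooth piece $h$ does not depend on $\lambda$; the $\lambda$-dependence enters solely through the smooth affine term, so $\nabla g$ is obtained by partial differentiation at $x_\lambda$ without any subdifferential calculus on $h$.
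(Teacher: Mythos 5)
Your proposal is correct and follows essentially the same route as the paper: form the Lagrangian of \eqref{eq:QP}, eliminate $t$ to obtain the simplex constraint $\mb{1}^T\lambda=1$, identify the remaining infimum over $x$ with $g(\lambda)$, apply Danskin's theorem (justified by strong convexity of the inner problem) for the gradient, and combine strong duality with strong convexity of \eqref{eq:simplified_primal_prob} for the primal recovery. Your two refinements — deriving $\mb{1}^T\lambda=1$ from finiteness of the infimum over $t$ rather than from the stationarity condition $\nabla_t L=0$, and explicitly verifying Slater's condition before invoking strong duality — are minor polish on the same argument rather than a different approach; incidentally, your gradient formula $\tilde{A}x_\lambda+\tilde{b}$ agrees with the paper's proof, and the $b$ in the lemma statement appears to be a typo for $\tilde{b}$.
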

\begin{proof}
    See Appendix \ref{appen:subproblem_dual}.
\end{proof}
Since $C$ is positive definite, the dual function $g$ is smooth, and we can solve problem \eqref{eq:subproblem_dual} by dual gradient-type methods such as FISTA \cite{Beck09FISTA}, which typically only takes a few iterations to solve problems with low variable dimension. In the implementation of FISTA, the gradient $\nabla g(\lambda)=\tilde{A}x_{\lambda}+b$ and the projection onto the constraint set (simplex) of \eqref{eq:subproblem_dual} can be implemented in the complexity of $O(M)$ \cite{condat2016fast}. For the subproblem in the primal update \eqref{eq:BDA-x} of BDA, $C=c_p I$ and \[x_{\lambda}=\prox_{h/c_p}(-(d+\tilde{A}^T\lambda)/c_p)+\tilde{b},\]
which is easy to solve if $h=\|\cdot\|_1$ or $h$ is the indicator function of an easy-to-project constraint set. Specifically, when $h=\|\cdot\|_1$, $n=100,000$, and $M=15$, applying FISTA to solve \eqref{eq:subproblem_dual} only takes $19 \sim 40$ iterations and $0.1\sim 0.2$ second in a PC with the Apple M3 8-core CPU. For the subproblem in the primal update \eqref{eq:BMM-x}, $C=c_pI+\rho A^TA$ and the computation of $x_{\lambda}$ requires the minimization of a strongly convex objective function.

The subproblems in the dual updates \eqref{eq:BDA-D} and \eqref{eq:BMM-D} of BDA and BMM take similar forms as \eqref{eq:simplified_primal_prob} with $x$ replaced with $v$, $C=c_dI$, and $h\equiv 0$, and can be rewritten as a quadratic program following the reformulation procedure from \eqref{eq:simplified_primal_prob} to \eqref{eq:QP}. The resulting quadratic program can be solved by either the primal or the dual approaches efficiently due to its simple form.

\section{Convergence Analysis}\label{sec:conv_ana}

This section analyses the convergence of BDA and BMM.

\subsection{Assumptions and assisting lemmas}\label{ssec:assum_assist_lemmas}

To analyse the convergence, we first impose assumptions on problem \eqref{problem} and the primal and dual models, and derive assisting lemmas.
\begin{assumption}\label{asm:prob}
The following holds:
     \begin{enumerate}[(a)]
     \item $f: \mathbb{R}^n \rightarrow \mathbb{R}$ is proper, closed, convex, and $\beta$-smooth for some $\beta>0$.
     \item $h: \mathbb{R}^n \rightarrow \mathbb{R}\cup\{+\infty\}$ is proper, closed, and convex.
     \item There exists at least one optimal solution $x^\star$ to problem \eqref{problem}.
    \end{enumerate}
\end{assumption}

Assumption \ref{asm:prob} is standard in convex optimization, which guarantees the strong duality between problem \eqref{problem} and its Lagrange dual problem \cite[Proposition 6.2.1]{bertsekas1997nonlinear}. Under Assumption \ref{asm:prob} and by the KKT condition \cite{boyd2004convex}, there exists $v^\star\in\mathbb{R}^m$ such that
\begin{align}
    & \mathbf{0} \in \partial F(x^\star) + A^Tv^\star\label{KKT2}
\end{align}
where $v^\star$ is a dual optimum to problem \eqref{problem}. The following lemma guarantees the existence of a particular dual optimum.
\begin{lemma}\label{lemma:vstar_in_R(A)}
    Suppose that Assumption \ref{asm:prob} holds. Then, there exists a dual variable $v^\star$ such that \eqref{KKT2} holds and $v^\star-v^0\in \operatorname{Range}(A)$. 
\end{lemma}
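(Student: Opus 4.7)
The plan is to exploit the fact that the KKT condition \eqref{KKT2} only constrains $v^\star$ through the image $A^Tv^\star$, so we are free to modify $v^\star$ by any element of $\operatorname{Null}(A^T)$ without destroying optimality. Since Assumption \ref{asm:prob} guarantees strong duality, there exists at least one dual optimum $\tilde v^\star$ satisfying \eqref{KKT2}; the goal is to project $\tilde v^\star - v^0$ onto $\operatorname{Range}(A)$ and absorb the orthogonal component.

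More concretely, the steps I would take are as follows. First, invoke the existence of some $\tilde v^\star$ satisfying $\mathbf{0}\in\partial F(x^\star)+A^T\tilde v^\star$ using strong duality under Assumption \ref{asm:prob}, as already noted in the paragraph preceding the lemma. Second, apply the fundamental orthogonal decomposition $\mathbb{R}^m = \operatorname{Range}(A)\oplus\operatorname{Null}(A^T)$ to the vector $\tilde v^\star - v^0$, writing
\begin{equation*}
    \tilde v^\star - v^0 = u + w,\qquad u\in\operatorname{Range}(A),~w\in\operatorname{Null}(A^T).
\end{equation*}
Third, define the candidate $v^\star := v^0 + u = \tilde v^\star - w$ and verify the two required properties: by construction $v^\star - v^0 = u\in\operatorname{Range}(A)$; and since $w\in\operatorname{Null}(A^T)$, one has $A^T v^\star = A^T\tilde v^\star - A^T w = A^T\tilde v^\star$, so the KKT inclusion \eqref{KKT2} persists.

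There is essentially no obstacle here; the lemma is a one-line consequence of the fundamental subspace decomposition together with the fact that \eqref{KKT2} depends on $v^\star$ only through $A^Tv^\star$. The only minor care needed is to state the orthogonal decomposition correctly (i.e., that $\operatorname{Range}(A)$ and $\operatorname{Null}(A^T)$ are orthogonal complements in $\mathbb{R}^m$), which is a standard linear algebra fact and can be cited without proof. No regularity of $F$ beyond that already assumed is used, so the argument is self-contained.
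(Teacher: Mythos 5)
Your proposal is correct and is essentially the paper's own argument: the paper decomposes $v^\star$ and $v^0$ separately into their $\operatorname{Range}(A)$ and $\operatorname{Null}(A^T)$ components and sets $\tilde v^\star = v_N^0 + v_R^\star$, which is exactly your $v^0+u$ with $u$ the $\operatorname{Range}(A)$ component of $\tilde v^\star - v^0$. Both rely on the same two facts — the orthogonal decomposition $\mathbb{R}^m=\operatorname{Range}(A)\oplus\operatorname{Null}(A^T)$ and the invariance of \eqref{KKT2} under perturbations of $v^\star$ in $\operatorname{Null}(A^T)$ — so there is no substantive difference.
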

\begin{proof}
See Appendix \ref{appen:prop_vstar_in_R(A)}.
\end{proof}

\begin{assumption}\label{asm:pri_model}
    \textbf{(Primal model)} The primal model $\hat{f}^k$ satisfies 
    \begin{enumerate}[(a)]
    \item $\hat{f}^k$ is convex. \label{primal model assumption a}
    \item $\hat {f}^k(x) \geq f(x^k) + \langle \nabla f(x^k),x-x^k\rangle$ for all $x\in\R^n$. \label{primal model assumption b}
    \item $\hat{f}^k(x)\leq f(x)$ for all $x\in\R^n$.\label{primal model assumption c}
    \end{enumerate}
\end{assumption}

Assumption \ref{asm:pri_model} requires $\hat{f}^k$ to be a convex minorant of $f$ and a majorant of the cutting plane $f(x^k)+\langle\nabla f(x^k),x-x^k\rangle$, which is standard in bundle methods \cite{mateo2023optimal,liao2025bundlebasedMM}. Moreover, by letting $x=x^k$ in Assumption \ref{asm:pri_model} (b)--(c), we have $\hat{f}^k(x^k)=f(x^k)$, i.e., the model $\hat{f}^k$ \emph{accurately} evaluates $f$ at $x^k$. Under Assumptions \ref{asm:prob}--\ref{asm:pri_model}, the following lemma bounds the approximation error of $\hat{f}^k$ on $f$ and its subgradients on $\nabla f$, respectively.
\begin{lemma}\label{lemma:smooth_f_hatf}
    Suppose that Assumptions \ref{asm:prob}--\ref{asm:pri_model} hold. Then, for each $k\ge 0$, 
    \begin{align}
        & f(x) \leq \hat{f}^k(x) + \frac{\beta}{2}\|x-x^k\|^2\label{lemma 3.1 1}\\
        & \|\hat{g} - \nabla f(x)\| \leq \beta\|x-x^k\|,\forall \hat{g} \in \partial \hat{f}^k(x). \label{like smooth}
    \end{align}
   
\end{lemma}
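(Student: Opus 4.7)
The plan is to prove the two inequalities in sequence, where the first provides a key ingredient for the second.

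For inequality \eqref{lemma 3.1 1}, I would combine $\beta$-smoothness of $f$ with Assumption \ref{asm:pri_model}(b). The $\beta$-smoothness of the convex function $f$ gives the standard descent inequality $f(x) \le f(x^k) + \langle \nabla f(x^k), x-x^k\rangle + \frac{\beta}{2}\|x-x^k\|^2$, and Assumption \ref{asm:pri_model}(b) bounds the affine part from above by $\hat{f}^k(x)$, yielding the result immediately.

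For inequality \eqref{like smooth}, let $\hat g\in\partial \hat f^k(x)$ be arbitrary. The idea is to chain three one-sided inequalities to get an upper bound on $\langle \hat g-\nabla f(x), y-x\rangle$ that is valid for every $y$, and then take a supremum over $y$ to extract the norm bound. First, convexity of $\hat{f}^k$ (Assumption \ref{asm:pri_model}(a)) gives $\hat{f}^k(y)\ge \hat{f}^k(x)+\langle \hat g, y-x\rangle$. Second, Assumption \ref{asm:pri_model}(c) gives $\hat{f}^k(y)\le f(y)$. Third, $\beta$-smoothness of $f$ gives $f(y)\le f(x)+\langle \nabla f(x), y-x\rangle + \frac{\beta}{2}\|y-x\|^2$. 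Concatenating these and rearranging yields
\[
\langle \hat g-\nabla f(x),y-x\rangle\le \bigl(f(x)-\hat{f}^k(x)\bigr)+\tfrac{\beta}{2}\|y-x\|^2.
\]
Applying \eqref{lemma 3.1 1} to the first term on the right bounds $f(x)-\hat{f}^k(x)$ by $\tfrac{\beta}{2}\|x-x^k\|^2$, leaving an inequality that holds for every $y\in\R^n$.

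At this point the proof reduces to optimizing over $y$. Substituting $z=y-x$, I get $\langle d,z\rangle-\tfrac{\beta}{2}\|z\|^2\le \tfrac{\beta}{2}\|x-x^k\|^2$ for all $z\in\R^n$, where $d:=\hat g-\nabla f(x)$. The left-hand side is maximized at $z=d/\beta$ with maximum value $\|d\|^2/(2\beta)$, so $\|d\|^2/(2\beta)\le \tfrac{\beta}{2}\|x-x^k\|^2$, which rearranges to the desired $\|\hat g-\nabla f(x)\|\le \beta\|x-x^k\|$.

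I do not anticipate a serious obstacle; the main subtlety is simply recognizing that \eqref{like smooth} follows by applying a Fenchel-type maximization argument to the pointwise estimate one obtains after sandwiching $\hat{f}^k$ between its linear lower bound (from convexity) and the quadratic upper bound on $f$ from smoothness, using \eqref{lemma 3.1 1} to absorb the gap $f(x)-\hat{f}^k(x)$.
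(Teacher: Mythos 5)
Your proof is correct and uses essentially the same ingredients as the paper's: part \eqref{lemma 3.1 1} is identical, and for part \eqref{like smooth} you chain the same three inequalities (convexity of $\hat f^k$, the minorant property, and $\beta$-smoothness) and evaluate at the same test point $y=x+(\hat g-\nabla f(x))/\beta$. The only difference is presentational — you argue directly by maximizing over $y$, whereas the paper phrases the same computation as a proof by contradiction.
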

\begin{proof}
See Appendix \ref{appen:lem_smooth_f_hatf}. 
\end{proof}
Lemma \ref{lemma:smooth_f_hatf} shows that the approximation error in both the function value and the gradient can be bounded by a function of the distance $\|x-x^k\|$, which is vital to our convergence analysis.

\begin{assumption}\label{asm:dual_modelDA}
    \textbf{(Dual model in BDA)} The dual model $\hat{q}^k$ satisfies 
    \begin{enumerate}[(a)]
    \item $\hat{q}^k$ is concave.
    \item $\hat{q}^k(v) \leq L(x^{k+1},v)$ for all $v\in\mathbb{R}^m$.
    \item $\hat{q}^k(v)\geq q(v)$  for all $v\in\mathbb{R}^m$.
    \end{enumerate}
\end{assumption}    

\begin{assumption}\label{asm:dual_modelMM}
    \textbf{(Dual model in BMM)} The dual model $\hat{q}_\rho^k$ satisfies 
    \begin{enumerate}[(a)]
    \item $\hat{q}^k_\rho$ is concave.
    \item $\hat{q}^k_\rho(v) \leq L_\rho(x^{k+1}, v)$ for all $v\in\mathbb{R}^m$.
    \item $\hat{q}_\rho^k(v)\geq q_\rho(v)$ for all $v\in\mathbb{R}^m$. 
    \end{enumerate}
\end{assumption} 

 Assumptions \ref{asm:dual_modelDA}--\ref{asm:dual_modelMM} can be viewed as variants of Assumption \ref{asm:pri_model}, with $f$ replaced with $-q$ and $-q_\rho$ and $\hat{f}^k$ replaced with $-\hat{q}^k$ and $-\hat{q}_\rho^k$, respectively. To see this, note that in Assumption \ref{asm:dual_modelDA}(b) and Assumption \ref{asm:dual_modelMM}(b),
\begin{align}
    L(x^{k+1},v) &= L(x^{k+1},v^k)+\langle \nabla_v L(x^{k+1}),v-v^k\rangle\label{eq:cut_Lxv}\\
    L_\rho(x^{k+1},v) &= L_\rho(x^{k+1},v^k)+\langle \nabla_v L_\rho(x^{k+1}),v-v^k\rangle.\label{eq:cut_Lrho_xv}
\end{align}
If $x^{k+1}\in \operatorname{\arg\;\min}_x L(x,v^k)$ in Assumption \ref{asm:dual_modelDA} and  $x^{k+1}\in \operatorname{\arg\;\min}_x L_\rho(x,v^k)$ in Assumption \ref{asm:dual_modelMM}, then Assumption \ref{asm:dual_modelDA}(b) and Assumption \ref{asm:dual_modelMM}(b) reduce to
\begin{align}
    -\hat{q}^k(v) \ge -q(v)+\langle -\nabla q(v^k), v-v^k\rangle,~~ -\hat{q}_\rho^k(v) \ge -q_\rho(v)+\langle -\nabla q_\rho(v^k), v-v^k\rangle.
\end{align}
If $\hat{f}^k\ne f$, then, in general, $x^{k+1}$ is not a minimizer of $L(x,v^k)$ in BDA and $L_\rho(x,v^k)$ in BMM, and Assumption \ref{asm:dual_modelDA}--\ref{asm:dual_modelMM} only approximate Assumption \ref{asm:pri_model}. The following lemma shows that the models in Section \ref{sssec:mod_cand} satisfy Assumptions \ref{asm:pri_model}--\ref{asm:dual_modelMM}.

\begin{lemma}\label{lemma:mod_sat_asm}
    The four models \eqref{eq:pol_model}--\eqref{eq:two-cut} and $\hat{f}^k=f$ satisfy Assumption \ref{asm:pri_model}, and their extensions \eqref{eq:model_q} and \eqref{eq:model_qrho} satisfy Assumption \ref{asm:dual_modelDA} and Assumption \ref{asm:dual_modelMM}, respectively.
\end{lemma}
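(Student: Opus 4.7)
The plan is to verify the three conditions in Assumption \ref{asm:pri_model} for each primal model, and then transfer the argument to the dual models in Assumptions \ref{asm:dual_modelDA}--\ref{asm:dual_modelMM} using the fact that $L$ and $L_\rho$ are affine in $v$.

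For the primal models, I would invoke the following general principles. Convexity (a) follows in every case because each $\hat{f}^k$ is a pointwise maximum of a finite family of affine (or constant) functions. For (b), each model explicitly contains the linear cut $f(x^k)+\langle \nabla f(x^k), x-x^k\rangle$ as one argument of the max: directly for the Polyak and two-cut models, and under the standard convention $k\in S^k$ for the cutting-plane and Polyak cutting-plane models. For (c), I would observe that each affine piece inside the max is a global minorant of $f$: the constant $\ell_f$ satisfies $\ell_f \le \min_x f(x) \le f$, and each cut $f(x^t)+\langle\nabla f(x^t), x-x^t\rangle$ lies below $f$ by convexity. The two-cut model \eqref{eq:two-cut} needs a short induction: assuming $\hat{f}^{k-1}\le f$ and using $\hat{g}^k\in\partial\hat{f}^{k-1}(x^k)$, the subgradient inequality for the convex function $\hat{f}^{k-1}$ gives $\hat{f}^{k-1}(x^k)+\langle \hat{g}^k, x-x^k\rangle\le \hat{f}^{k-1}(x)\le f(x)$. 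The base case $\hat{f}^0$ is a single cut, and $\hat{f}^k\equiv f$ is trivial.

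For the dual models, the central observation is that $L(x,v)$ and $L_\rho(x,v)$ are affine in $v$, which, combined with \eqref{eq:Cqk} and \eqref{eq:Cq_rhok}, yields the key identities
\begin{align*}
C_q^k(v) &= L(x^{k+1},v), \qquad C_{q,\rho}^k(v) = L_\rho(x^{k+1},v).
\end{align*}
Consequently, every dual model in \eqref{eq:model_q} and \eqref{eq:model_qrho} is a pointwise minimum of affine functions of $v$, hence concave, giving part (a). For part (b), the minimum over $\{C_q^t\}_{t\in S^k}$ (or $\{C_{q,\rho}^t\}_{t\in S^k}$) is bounded above by the $t=k$ term, namely $L(x^{k+1},v)$ (resp.\ $L_\rho(x^{k+1},v)$); the constants $u_q$ and $u_{q,\rho}$ appearing in the min only tighten this upper bound. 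For part (c), each $C_q^t(v)=L(x^{t+1},v)\ge \inf_x L(x,v)=q(v)$ and $u_q\ge \max_v q(v) \ge q(v)$ by assumption, so the minimum still dominates $q$; the same reasoning with $L_\rho$ and $u_{q,\rho}$ handles $\hat{q}_\rho^k\ge q_\rho$. The two-cut dual model is handled by the analogous induction, using the concave supergradient inequality associated with $-\hat{g}_q^k\in \partial(-\hat{q}^{k-1})(v^k)$, i.e., $\hat{q}^{k-1}(v^k)+\langle \hat{g}_q^k, v-v^k\rangle\ge \hat{q}^{k-1}(v)\ge q(v)$, and symmetrically for $\hat{q}_\rho^k$.

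The main bookkeeping obstacle I anticipate is keeping sign conventions straight when translating convex subgradient inequalities on the primal side into concave supergradient inequalities on the dual side, especially in the two-cut inductive step where the sign flip in $-\hat{g}_q^k\in\partial(-\hat{q}^{k-1})(v^k)$ must be unwound carefully. Everything else reduces to elementary pointwise max/min manipulations together with convexity of $f$ and the defining relation $q(v)=\inf_x L(x,v)$ (resp.\ $q_\rho(v)=\inf_x L_\rho(x,v)$).
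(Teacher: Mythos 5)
Your proposal is correct and follows essentially the same route as the paper's proof: convexity/concavity from pointwise max/min of affine functions, the key identity $C_q^k(v)=L(x^{k+1},v)$ from affinity of $L$ in $v$, minorant/majorant arguments for each affine piece, and induction for the two-cut models. The only addition worth noting is that you make explicit the convention $k\in S^k$ needed for Assumption \ref{asm:pri_model}(b) in the cutting-plane models, which the paper leaves implicit.
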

\begin{proof}
    See Appendix \ref{appen:lem_mod_sat_asm}.
\end{proof}





Another favorable property of the dual models \eqref{eq:model_q} and \eqref{eq:model_qrho} is that for any $k\ge 0$ and $v\in\mathbb{R}^m$,
\begin{equation}\label{eq:partial_in_range_A}
    \partial (-\hat{q}^k(v))\subseteq \operatorname{Range}(A),\qquad \partial (-\hat{q}_\rho^k(v))\subseteq \operatorname{Range}(A)
\end{equation}
which will be used in Section \ref{ssec:conv_BDA} and Section \ref{ssec:conv_BMM} to derive linear convergence rates of our methods and is assumed below.
\begin{assumption}\label{asm:dual subgradient in range a}
    Equation \eqref{eq:partial_in_range_A} holds for all $k\ge 0$ and $v\in\mathbb{R}^m$.
\end{assumption}

\begin{lemma}\label{lem:dual_model_subdifferential}
     The dual models \eqref{eq:model_q} and \eqref{eq:model_qrho} satisfy Assumption \ref{asm:dual subgradient in range a}.
\end{lemma}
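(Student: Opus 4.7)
The plan is to show that every ``slope'' appearing in the construction of $\hat{q}^k$ and $\hat{q}_\rho^k$ already lies in $\operatorname{Range}(A)$, and then to propagate this property through the minimum/maximum operations via standard subdifferential calculus, using the fact that $\operatorname{Range}(A)$ is a linear subspace (and therefore closed under convex combinations).

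First I would establish the base observation: by Assumption \ref{asm:prob}(c) there exists $x^\star$ with $Ax^\star=b$, so $b\in\operatorname{Range}(A)$. Consequently, for every $k\ge 0$,
\begin{equation*}
    \nabla_v L(x^{k+1},v^k)=Ax^{k+1}-b\in\operatorname{Range}(A), \qquad \nabla_v L_\rho(x^{k+1},v^k)=Ax^{k+1}-b\in\operatorname{Range}(A),
\end{equation*}
so each approximate cutting plane $C_q^k$ (resp.\ $C_{q,\rho}^k$) defined in \eqref{eq:Cqk} (resp.\ \eqref{eq:Cq_rhok}) is affine with slope in $\operatorname{Range}(A)$. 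Since $0\in\operatorname{Range}(A)$, the constant affine functions $u_q$ and $u_{q,\rho}$ also have slopes in $\operatorname{Range}(A)$.

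Next I would handle the Polyak, cutting-plane and Polyak cutting-plane models uniformly. In each of these three cases $\hat{q}^k$ (resp.\ $\hat{q}_\rho^k$) is the pointwise minimum of finitely many affine functions, so $-\hat{q}^k$ (resp.\ $-\hat{q}_\rho^k$) is a pointwise maximum of finitely many affine functions whose slopes all lie in $\operatorname{Range}(A)$. By the standard subdifferential formula for the max of convex functions \cite[Cor.\ VI.4.3.2]{lemarechal1993convex}, $\partial(-\hat{q}^k)(v)$ is the convex hull of the slopes of those affine pieces that are active at $v$. Since $\operatorname{Range}(A)$ is a linear subspace, any convex combination of its elements remains inside, giving $\partial(-\hat{q}^k)(v)\subseteq\operatorname{Range}(A)$, and the same argument applies to $\partial(-\hat{q}_\rho^k)(v)$.

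Finally, for the two-cut model I would argue by induction on $k$. The base case $\hat{q}^0=C_q^0$ is already handled by Step~1. For the inductive step, assume $\partial(-\hat{q}^{k-1})(v)\subseteq\operatorname{Range}(A)$ for every $v$; then $-\hat{g}_q^k\in\partial(-\hat{q}^{k-1})(v^k)\subseteq\operatorname{Range}(A)$, so the affine minorant $\hat{q}^{k-1}(v^k)+\langle\hat{g}_q^k,v-v^k\rangle$ appearing in \eqref{eq:model_q} has slope in $\operatorname{Range}(A)$. Together with Step~1 this makes $\hat{q}^k$ the minimum of two affine functions with slopes in $\operatorname{Range}(A)$, and the max-subdifferential formula again yields $\partial(-\hat{q}^k)(v)\subseteq\operatorname{Range}(A)$. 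The identical induction on $\hat{q}_\rho^k$ completes the proof. No step is genuinely difficult; the only care needed is keeping the sign conventions straight when moving between the concave $\hat{q}^k$ and the convex $-\hat{q}^k$ and when invoking the max-subdifferential rule.
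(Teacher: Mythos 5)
Your proposal is correct and follows essentially the same route as the paper's proof: observe that $b=Ax^\star$ makes every cutting-plane slope $Ax^{t+1}-b=A(x^{t+1}-x^\star)$ (and the constant pieces' slope $\mathbf{0}$) lie in $\operatorname{Range}(A)$, apply the max-of-affine-functions subdifferential formula together with closedness of the subspace under convex combinations, and handle the two-cut model by induction. The only cosmetic difference is the reference cited for the max-subdifferential rule.
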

\begin{proof}
    See Appendix \ref{appen:lem_dual_model_subdifferential}.
\end{proof}

\subsection{Convergence of BDA}\label{ssec:conv_BDA}

With the assisting lemmas in Section \ref{ssec:assum_assist_lemmas}, we derive the convergence of BDA. To this end, define
\begin{align*}
    z^k = 
\begin{bmatrix}
x^k\\
v^k 
\end{bmatrix},\quad z^\star = 
\begin{bmatrix}
x^\star\\
v^\star
\end{bmatrix},\quad \Lambda = \begin{bmatrix}
\frac{c_p}{2} & 0\\
0 & \frac{c_d}{2}  
\end{bmatrix}
\end{align*}
where $x^\star$ is optimal to problem \eqref{problem} and $v^\star$ is a dual optimum satisfying $v^\star-v^0\in \operatorname{Range}(A)$ whose existence is guaranteed by Lemma \ref{lemma:vstar_in_R(A)}.

\begin{theorem}\label{thm:BDA}
\textbf{(Convergence of BDA)} Suppose that $f$ is $\mu$-strongly convex for some $\mu>0$ and that Assumptions \ref{asm:prob} -- \ref{asm:dual_modelDA} hold. Let $\{x^k\}$ be generated by BDA (\eqref{eq:BDA-x} and \eqref{eq:BDA-D}). If
\begin{equation}\label{eq:step_cond_BDA}
    c_p>\frac{2\beta^2}{\mu},\quad c_d > \frac{2\|A\|^2}{\mu}
\end{equation}
then $\lim_{k\rightarrow+\infty} x^k=x^\star$ and for all $k\ge 1$, 
\begin{equation}\label{eq:BDA_rate}
    \frac{1}{k}\sum_{t=1}^k \|x^\star-x^{t}\|^2=O\left(\frac{1}{k}\right), \qquad\min_{t\leq k}\|x^\star-x^t\|^2 = o\left(\frac{1}{k}\right).
\end{equation}
Furthermore, if $h(x) \equiv0$ and Assumption \ref{asm:dual subgradient in range a} holds, then
\begin{equation}\label{eq:linear_rate_BDA}
    \|z^\star-z^{k+1}\|^2_\Lambda \leq (1-\alpha)\|z^\star-z^{k}\|^2_\Lambda,
\end{equation}
where $\alpha =  \frac{\sigma_A^2}{6c_dc_p} \cdot \min\left\{1-2\beta^2/(\mu c_p), 1-2\|A\|^2/(\mu c_d)\right\}\in(0,1)$ with $\sigma_A$ being the smallest non-zero singular value of $A$.
\end{theorem}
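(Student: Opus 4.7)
The plan is to build a Lyapunov function $\Phi^k := \|z^k-z^\star\|_\Lambda^2 = \frac{c_p}{2}\|x^k-x^\star\|^2 + \frac{c_d}{2}\|v^k-v^\star\|^2$ and derive descent inequalities $\Phi^k - \Phi^{k+1} \ge (\text{nonnegative terms})$ that yield both \eqref{eq:BDA_rate} and \eqref{eq:linear_rate_BDA}. The analysis rests on two three-point inequalities (one from each subproblem) plus a Young-type treatment of the cross term they produce.

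First I derive the primal three-point inequality. By $c_p$-strong convexity of the primal subproblem's objective (applied to its minimizer $x^{k+1}$ and comparator $x^\star$), Assumption \ref{asm:pri_model} ($\hat{f}^k\le f$), and Lemma \ref{lemma:smooth_f_hatf} ($f \le \hat{f}^k + \frac{\beta}{2}\|\cdot-x^k\|^2$):
\[\frac{c_p}{2}\|x^\star-x^k\|^2 - \frac{c_p}{2}\|x^\star-x^{k+1}\|^2 \ge \frac{c_p-\beta}{2}\|x^{k+1}-x^k\|^2 + \langle v^k, Ax^{k+1}-b\rangle + F(x^{k+1}) - F(x^\star).\]
Substituting the saddle-point bound $F(x^{k+1}) - F(x^\star) + \langle v^\star, Ax^{k+1}-b\rangle \ge \frac{\mu}{2}\|x^{k+1}-x^\star\|^2$ (a consequence of $\mu$-strong convexity of $f$ and KKT \eqref{KKT2}) yields
\[\Delta_x \ge \frac{\mu}{2}\|x^{k+1}-x^\star\|^2 + \frac{c_p-\beta}{2}\|x^{k+1}-x^k\|^2 + \langle v^k-v^\star, Ax^{k+1}-b\rangle,\]
with $\Delta_x := \frac{c_p}{2}\|x^\star-x^k\|^2 - \frac{c_p}{2}\|x^\star-x^{k+1}\|^2$. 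Analogously, $c_d$-strong convexity of the dual objective together with Assumption \ref{asm:dual_modelDA} and $q(v^\star)=F(x^\star)$ gives
\[\Delta_v \ge \frac{c_d}{2}\|v^{k+1}-v^k\|^2 - \bigl[F(x^{k+1}) - F(x^\star) + \langle v^{k+1}, Ax^{k+1}-b\rangle\bigr].\]

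For the sublinear rates, I apply Young's inequality to the cross term in the primal descent, e.g., $\langle v^k-v^\star, A(x^{k+1}-x^\star)\rangle \ge -\frac{\mu}{4}\|x^{k+1}-x^\star\|^2 - \frac{\|A\|^2}{\mu}\|v^k-v^\star\|^2$. Adding $\Delta_v$ and exploiting $c_d>2\|A\|^2/\mu$ absorbs the negative $-\frac{\|A\|^2}{\mu}\|v^k-v^\star\|^2$ term into the dual progress, leaving a descent of the form $\Phi^k-\Phi^{k+1}\ge \gamma\|x^{k+1}-x^\star\|^2+(\text{other nonneg.})$ for some $\gamma>0$. Telescoping bounds $\sum_t\|x^t-x^\star\|^2$ by a constant, which immediately gives the $O(1/k)$ ergodic rate and, via the standard summability-to-$o(1/k)$ argument, the minimum rate; the same summability together with a Fej\'er-style monotonicity argument delivers $\lim_{k\to\infty}x^k=x^\star$.

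For the linear rate under $h\equiv 0$ and Assumption \ref{asm:dual subgradient in range a}, the primal optimality reads $\nabla\hat{f}^k(x^{k+1}) + c_p(x^{k+1}-x^k) + A^Tv^k = 0$. Combining this with the KKT equality $\nabla f(x^\star)=-A^Tv^\star$, Lemma \ref{lemma:smooth_f_hatf}'s gradient bound $\|\nabla\hat{f}^k(x^{k+1})-\nabla f(x^{k+1})\|\le\beta\|x^{k+1}-x^k\|$, and $\beta$-smoothness of $f$ yields
\[\|A^T(v^k-v^\star)\| \le (\beta+c_p)\|x^{k+1}-x^k\| + \beta\|x^{k+1}-x^\star\|.\]
Assumption \ref{asm:dual subgradient in range a} and the dual update imply $v^{k+1}-v^k\in\operatorname{Range}(A)$; together with $v^\star-v^0\in\operatorname{Range}(A)$ from Lemma \ref{lemma:vstar_in_R(A)}, induction gives $v^k-v^\star\in\operatorname{Range}(A)$ for every $k$, hence $\sigma_A^2\|v^k-v^\star\|^2 \le \|A^T(v^k-v^\star)\|^2$. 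This upper-bounds $\|v^k-v^\star\|^2$ — and, via $\|v^{k+1}-v^\star\|^2\le 2\|v^{k+1}-v^k\|^2+2\|v^k-v^\star\|^2$, also $\|v^{k+1}-v^\star\|^2$ — by a positive combination of $\|x^{k+1}-x^\star\|^2$, $\|x^{k+1}-x^k\|^2$, and $\|v^{k+1}-v^k\|^2$. Matching this upper bound on $\Phi^k$ against the descent produced by Young-bounding both cross terms in the primal and dual inequalities, and invoking \eqref{eq:step_cond_BDA} to guarantee positive coefficients on $\|x^{k+1}-x^\star\|^2$ and $\|x^{k+1}-x^k\|^2$ in the descent, delivers $\Phi^{k+1}\le(1-\alpha)\Phi^k$ with the claimed $\alpha$.

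The main obstacle is the bookkeeping in the final step: extracting exactly $\alpha=\frac{\sigma_A^2}{6c_dc_p}\min\{1-2\beta^2/(\mu c_p),\,1-2\|A\|^2/(\mu c_d)\}$ requires a carefully balanced three-way split of the positive descent to (i) absorb the cross term, (ii) dominate the $\|x^{k+1}-x^\star\|^2$ contribution coming from the $\|v^k-v^\star\|^2$ upper bound, and (iii) leave a residual that drives the geometric contraction; the factor $\tfrac{1}{6}$ and the symmetric $\min$ appear to be the consequences of this splitting.
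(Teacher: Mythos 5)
Your two three-point inequalities are individually correct, and your overall architecture (the Lyapunov function $\|z-z^\star\|_\Lambda^2$, telescoping, and the $\operatorname{Range}(A)$/$\sigma_A$ argument for the linear rate) matches the paper's. The genuine gap is in how you combine the primal and dual inequalities. Once you substitute the saddle-point bound $F(x^{k+1})-F(x^\star)+\langle v^\star,Ax^{k+1}-b\rangle\ge\frac{\mu}{2}\|x^{k+1}-x^\star\|^2$ into the primal inequality, the difference $F(x^{k+1})-F(x^\star)$ is consumed there, but it still appears with a minus sign (together with $-\langle v^{k+1},Ax^{k+1}-b\rangle$) on the right-hand side of your dual inequality, and nothing controls it: a lower bound on $-[F(x^{k+1})-F(x^\star)]$ is an upper bound on the objective gap, which is unavailable for nonsmooth $F$. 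If you instead add the two inequalities \emph{before} invoking strong convexity so that the $F$-differences cancel, the surviving cross term is $\langle v^k-v^{k+1},Ax^{k+1}-b\rangle$ and the $\frac{\mu}{2}\|x^{k+1}-x^\star\|^2$ term disappears entirely, leaving $\Delta_x+\Delta_v\ge\frac{c_p-\beta}{2}\|x^{k+1}-x^k\|^2-\frac{\|A\|^2}{2c_d}\|x^{k+1}-x^\star\|^2$ after Young's inequality, with a \emph{negative} coefficient on $\|x^{k+1}-x^\star\|^2$. Your proposed remedy---Young on $\langle v^k-v^\star,Ax^{k+1}-b\rangle$ and absorbing $-\frac{\|A\|^2}{\mu}\|v^k-v^\star\|^2$ ``into the dual progress''---also fails, because $\Delta_v=\frac{c_d}{2}\|v^k-v^\star\|^2-\frac{c_d}{2}\|v^{k+1}-v^\star\|^2$ is a telescoping difference, not a nonnegative quantity: after the absorption you are left with an uncontrolled $-\frac{c_d}{2}\|v^{k+1}-v^\star\|^2$.

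The paper avoids this by never cancelling the $F$-gap against the saddle inequality. It starts from the dual three-point inequality \eqref{A BDA}, substitutes the primal first-order optimality condition $\hat g^{k+1}+s^{k+1}+c_p(x^{k+1}-x^k)+A^Tv^k=\mathbf{0}$ into $\langle v^{k+1},Ax^{k+1}-b\rangle$, and thereby produces the Bregman-type quantity $F(x^\star)-F(x^{k+1})-\langle\hat g^{k+1}+s^{k+1},x^\star-x^{k+1}\rangle$, which is lower bounded by $\frac{\mu}{4}\|x^\star-x^{k+1}\|^2-\frac{\beta^2}{\mu}\|x^{k+1}-x^k\|^2$ using strong convexity of $f$ at $x^{k+1}$ together with the gradient-error bound \eqref{like smooth} (not the function-value bound \eqref{lemma 3.1 1}). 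The leftover cross term is $\langle v^{k+1}-v^k,Ax^{k+1}-b\rangle$, Young-ed against the $\frac{c_d}{2}\|v^{k+1}-v^k\|^2$ generated by the dual three-point inequality and against the $\frac{\mu}{4}\|x^{k+1}-x^\star\|^2$ just obtained; this is exactly where $c_d>2\|A\|^2/\mu$ enters. A secondary consequence of your route is that you would obtain the coefficient $\frac{c_p-\beta}{2}$ on $\|x^{k+1}-x^k\|^2$ rather than $\frac{c_p}{2}-\frac{\beta^2}{\mu}$, so even after repairing the combination you would not recover the stated constant $\alpha$. Your linear-rate step (bounding $\|A^T(v^k-v^\star)\|$ from the two optimality conditions and passing to $\|v^k-v^\star\|$ via $\sigma_A$ and the range argument) is the paper's, but it rests on the descent inequality that the combination step was supposed to deliver.
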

\begin{proof}
See Appendix \ref{appen:thm_BDA}.
\end{proof}

The next theorem gives the convergence result of BDA-D where $\hat{f}^k=f$. Due to the condition $c_p>\frac{2\beta^2}{\mu}$, the convergence results in Theorem \ref{thm:BDA} do not cover BDA-D where $c_p=0$.

\begin{theorem}\label{thm:BDA-D}
    \textbf{(Convergence of BDA-D)} Suppose that $f$ is $\mu$-strongly convex and that Assumptions \ref{asm:prob} and \ref{asm:dual_modelDA} hold. Let $\{x^k\}$ be generated by BDA-D (\eqref{eq:DA_x} and \eqref{eq:BDA-D}). If $c_d>\frac{\|A\|^2}{\mu}$, then $\lim_{k\rightarrow+\infty} x^k=x^\star$ and \eqref{eq:BDA_rate} holds. If, in addition, $h(x) \equiv0$ and Assumption \ref{asm:dual subgradient in range a} holds, then
    \begin{align}
        & \|x^\star-x^{k+1}\|^2 \le  \frac{c_d}{\mu-\|A\|^2/c_d}(1-\alpha)^k\|v^\star-v^{0}\|^2\label{eq:BDA-D_x_rate}\\
        & \|v^\star-v^k\|^2 \le (1-\alpha)^k\|v^\star-v^{0}\|^2\label{eq:BDA-D_v_rate}
    \end{align}
    where $\alpha = \frac{\sigma_A^2}{\beta^2 c_d}\cdot \left(\mu-\frac{\|A\|^2}{c_d}\right)\in (0,1)$.
\end{theorem}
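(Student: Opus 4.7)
The plan is to view BDA-D as an inexact proximal point method applied to $\phi:=-q$ and derive a single descent inequality from which summability, the sublinear rates, and the linear rate all follow. Because the primal subproblem \eqref{eq:DA_x} is solved exactly, we have $q(v^k)=L(x^{k+1},v^k)$ and $\nabla q(v^k)=Ax^{k+1}-b$. Hence Assumption \ref{asm:dual_modelDA}(b) reduces to the cutting-plane inequality $\hat q^k(v)\le q(v^k)+\langle\nabla q(v^k),v-v^k\rangle$; combined with Assumption \ref{asm:dual_modelDA}(c), this makes $\hat q^k$ a standard bundle model tangent to $q$ at $v^k$. Moreover, $\mu$-strong convexity of $f$ makes $\phi$ convex and $L$-smooth with $L=\|A\|^2/\mu$.

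The key descent step I would derive is
\begin{equation*}
\tfrac{c_d}{2}\|v^\star-v^{k+1}\|^2 \le \tfrac{c_d}{2}\|v^\star-v^k\|^2 - \tfrac{c_d-L}{2}\|v^{k+1}-v^k\|^2 - (q(v^\star)-q(v^{k+1})),
\end{equation*}
obtained by applying the $c_d$-strong convexity of the dual subproblem at $v=v^\star$, bounding $\hat q^k(v^{k+1})\le L(x^{k+1},v^{k+1})$ above using $L$-smoothness of $\phi$, and using $\hat q^k(v^\star)\ge q(v^\star)$. Under $c_d>L$ and since $q(v^\star)\ge q(v^{k+1})$, all terms on the right other than $\tfrac{c_d}{2}\|v^\star-v^k\|^2$ are nonpositive, giving monotonicity of $\|v^\star-v^k\|^2$ and summability of both $\|v^{k+1}-v^k\|^2$ and $q(v^\star)-q(v^{k+1})$.

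For the sublinear rates, I would next establish the primal-dual inequality $q(v^\star)-q(v^k)\ge\tfrac{\mu}{2}\|x^\star-x^{k+1}\|^2$, which comes from $\mu$-strong convexity of $F$, the optimality condition of $x^{k+1}$ in \eqref{eq:DA_x}, strong duality, and $Ax^\star=b$. Summing the descent inequality then yields $\sum_{k\ge 1}\|x^\star-x^{k+1}\|^2\le\tfrac{c_d}{\mu}\|v^\star-v^0\|^2<\infty$, which gives $x^k\to x^\star$, the $O(1/k)$ averaged rate, and the $o(1/k)$ min rate via the standard tail-sum argument ($\min_{t\le k}a_t=o(1/k)$ whenever $\sum_t a_t<\infty$).

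For the linear rate under $h\equiv 0$ and Assumption \ref{asm:dual subgradient in range a}, I would first confine the iterates to $v^0+\operatorname{Range}(A)$: Lemma \ref{lemma:vstar_in_R(A)} supplies $v^\star-v^0\in\operatorname{Range}(A)$, while the optimality condition $c_d(v^{k+1}-v^k)\in-\partial(-\hat q^k)(v^{k+1})\subseteq\operatorname{Range}(A)$ gives $v^k-v^0\in\operatorname{Range}(A)$ by induction, so $v^k-v^\star\in\operatorname{Range}(A)$. On this affine subspace, the identity $\nabla f(x(v))=-A^T v$ combined with $\mu$-strong convexity and $\beta$-smoothness of $f$ and $\|A^T w\|\ge\sigma_A\|w\|$ for $w\in\operatorname{Range}(A)$ shows that $\phi$ is $\sigma$-strongly convex with $\sigma=\mu\sigma_A^2/\beta^2$, hence $q(v^\star)-q(v^{k+1})\ge\tfrac{\sigma}{2}\|v^{k+1}-v^\star\|^2$. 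Inserting this into the descent inequality and balancing it against the $\tfrac{c_d-L}{2}\|v^{k+1}-v^k\|^2$ term should yield the contraction $\|v^\star-v^{k+1}\|^2\le(1-\alpha)\|v^\star-v^k\|^2$ with the stated $\alpha$. The primal rate \eqref{eq:BDA-D_x_rate} then follows from $\|x^{k+1}-x^\star\|^2\le\tfrac{2}{\mu}(q(v^\star)-q(v^k))$ together with a bound on $q(v^\star)-q(v^k)$ read off the descent inequality and the geometric decay of $\|v^\star-v^k\|^2$. The main obstacle is reproducing the precise constants $\alpha$ and $c_d/(\mu-\|A\|^2/c_d)$: this requires carefully weighting the descent and strong-convexity inequalities rather than simply dropping terms, and keeping track of which quantities are telescoped versus discarded at each stage.
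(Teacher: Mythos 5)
Your proposal is correct, but it follows a genuinely different route from the paper. The paper treats BDA-D as the $c_p=0$, $\hat f^k=f$ specialization of the BDA analysis: it combines the $c_d$-strong convexity of the dual subproblem with Assumptions \ref{asm:dual_modelDA}(b)--(c) to get a primal gap inequality in $F$, eliminates $\langle v^{k+1},Ax^{k+1}-b\rangle$ via the primal optimality condition and Young's inequality, invokes $\mu$-strong convexity of $f$ in $x$-space to arrive at $(\tfrac{\mu}{2}-\tfrac{\|A\|^2}{2c_d})\|x^\star-x^{k+1}\|^2\le\tfrac{c_d}{2}(\|v^\star-v^k\|^2-\|v^\star-v^{k+1}\|^2)$, and for the linear rate uses the KKT identities $\nabla f(x^{k+1})=-A^Tv^k$, $\nabla f(x^\star)=-A^Tv^\star$ with $\beta$-smoothness and $\sigma_A^2\|w\|^2\le\|A^Tw\|^2$ on $\operatorname{Range}(A)$ to bound $\|v^k-v^\star\|^2\le\tfrac{\beta^2}{\sigma_A^2}\|x^\star-x^{k+1}\|^2$ and close the recursion. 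You instead run a pure dual-side bundle/proximal-point analysis on $-q$, using its $\|A\|^2/\mu$-smoothness and its $\mu\sigma_A^2/\beta^2$-strong convexity on $v^0+\operatorname{Range}(A)$, and convert to $x$-space only at the end via $q(v^\star)-q(v^k)\ge\tfrac{\mu}{2}\|x^\star-x^{k+1}\|^2$. Your descent inequality and both auxiliary inequalities are correct, and the approach is arguably more modular (it isolates the dual function's conditioning, and in fact yields the slightly sharper contraction factor $(1+\mu\sigma_A^2/(\beta^2c_d))^{-1}$); what the paper's route buys is that the stated constants fall out directly. The constant-matching you flag as the remaining obstacle does go through: from $\mu\le\beta$ and $\sigma_A\le\|A\|$ one gets $\mu\sigma_A^2/(\beta^2 c_d)\le\|A\|^2/(\mu c_d)$, which implies both $(1+\mu\sigma_A^2/(\beta^2c_d))^{-1}\le 1-\alpha$ and $\mu(1-\alpha)\ge\mu-\|A\|^2/c_d$, so your bounds dominate \eqref{eq:BDA-D_v_rate} and \eqref{eq:BDA-D_x_rate} exactly as stated (after the one-index shift between $q(v^{k})$ and $x^{k+1}$, which you handle consistently).
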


\begin{proof}
 See Appendix \ref{appen:thm_BDA_D}.
\end{proof}

\begin{remark}
    BDA and BDA-D generalize the primal-dual gradient method and DA, respectively, and the convergence rates in Theorems \ref{thm:BDA}--\ref{thm:BDA-D} are of the same order as the primal-dual gradient method \cite{alghunaim2020linear} and DA \cite{Lu16}. However, due to the use of more accurate models, BDA and BDA-D can yield much faster convergence, which will be shown numerically in Section \ref{Section numerical experiment}.
\end{remark}

\subsection{Convergence of BMM}\label{ssec:conv_BMM}

This subsection analyses the convergence of BMM. By the KKT condition of problem \eqref{problem}, a point $x^\star$ is optimal to problem \eqref{problem} if and only if it is feasible and \eqref{KKT2} holds for some $v^\star$, which is equivalent to 
\begin{equation}\label{eq:KKT_BMM}
    g_F^\star \in \operatorname{Range}(A^T),\qquad Ax^\star=b
\end{equation}
for some $g_F^\star \in \partial F(x^\star)$. Moreover, letting $P_{N}$ be the projection matrix onto the space $\operatorname{Null}(A)$, \eqref{eq:KKT_BMM} can be rewritten as
\begin{equation}\label{eq:KKT_BMM2}
    P_{N}g_F^\star=\mathbf{0},\qquad Ax^\star=b.
\end{equation}
\begin{theorem}\label{thm:BMM}
    \textbf{(Convergence of BMM)} Suppose that Assumptions \ref{asm:prob}, \ref{asm:pri_model}, \ref{asm:dual_modelMM} hold. Let $\{x^k\}$ be generated by BMM (\eqref{eq:BMM-x} and \eqref{eq:BMM-D}). If 
    \begin{equation}\label{eq:step_BMM}
        \rho>0, \quad c_p > \beta, \quad c_d > \frac{1}{\rho}
    \end{equation}
    then for some $g_F^{k} \in \partial F(x^{k})$, it holds that $\lim_{k \to +\infty} P_{N} g_F^k = \mathbf{0}$, $\lim_{k \to +\infty}Ax^k=b$ and
    \begin{align}
        &\frac{1}{k}\sum_{t=1}^k\|P_{N}g_F^t\|^2 = O\left(\frac{1}{k}\right),~~ \min_{t\leq k}\|P_{N} g_F^{t}\|^2 = o\left(\frac{1}{k}\right)\label{eq:BMM_result_sublinear_P}\\
        &\frac{1}{k}\sum_{t=1}^k  \|Ax^t-b\|^2= O\left(\frac{1}{k}\right),~~ \min_{t\leq k}\|Ax^t-b\|^2= o\left(\frac{1}{k}\right).\label{eq:BMM_result_sublinear_Ax-b}
    \end{align}
If, in addition, $f$ is $\mu$-strongly convex, $h(x)\equiv0$, $c_p>2\beta^2/\mu$, and Assumption \ref{asm:dual subgradient in range a} holds, then 
\begin{equation}\label{eq:BMM_result_linear}
     \|z^\star-z^{k+1}\|_\Lambda^2\leq (1-\alpha)\|z^\star-z^{k}\|_\Lambda^2
 \end{equation}
where $\alpha = \frac{\min\left\{1-2\beta^2/(\mu c_p),1-1/(\rho c_d)\right\}}{5c_pc_d(1/\sigma_A^2+\rho/\mu)}\in(0,1)$.
\end{theorem}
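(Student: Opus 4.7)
The plan is to derive a Lyapunov-type descent inequality for $\|z^\star-z^k\|_\Lambda^2$ that furnishes the sublinear KKT-residual rates by telescoping, and, under the strengthened hypotheses, to upgrade this to a contraction using the strong convexity of $f$ together with Assumption~\ref{asm:dual subgradient in range a}. The overall scheme mirrors the proof of Theorem~\ref{thm:BDA} for BDA, but the augmented quadratic term $(\rho/2)\|Ax-b\|^2$ in the primal subproblem and the cross term $\langle v^k-v^{k+1},Ax^{k+1}-b\rangle$ that arises on combining the primal and dual inequalities both require careful handling.

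I would first establish a primal inequality by exploiting the fact that the primal objective is strongly convex with respect to the form $c_pI+\rho A^TA$. Evaluating the strong-convexity gap $\Phi^k(x^\star)-\Phi^k(x^{k+1})$ using $Ax^\star=b$ produces the term $(\rho/2)\|Ax^{k+1}-b\|^2$ on top of the usual $(c_p/2)\|x^\star-x^{k+1}\|^2$; combining with $\hat{f}^k\le f$ (Assumption~\ref{asm:pri_model}(c)) and $f\le \hat{f}^k+(\beta/2)\|\cdot-x^k\|^2$ (Lemma~\ref{lemma:smooth_f_hatf}) yields
\[F(x^{k+1})-F(x^\star)+\langle v^k,Ax^{k+1}-b\rangle+\rho\|Ax^{k+1}-b\|^2+\tfrac{c_p}{2}\|x^\star-x^{k+1}\|^2+\tfrac{c_p-\beta}{2}\|x^{k+1}-x^k\|^2 \le \tfrac{c_p}{2}\|x^\star-x^k\|^2.\]
An analogous argument for the $c_d$-strongly convex dual subproblem, using $q_\rho(v^\star)=F(x^\star)$ together with $\hat{q}_\rho^k(v^\star)\ge q_\rho(v^\star)$ and $\hat{q}_\rho^k(v^{k+1})\le L_\rho(x^{k+1},v^{k+1})$ from Assumption~\ref{asm:dual_modelMM}, bounds $F(x^{k+1})-F(x^\star)+\langle v^{k+1},Ax^{k+1}-b\rangle+(\rho/2)\|Ax^{k+1}-b\|^2$ from below by $(c_d/2)(\|v^\star-v^{k+1}\|^2-\|v^\star-v^k\|^2)+(c_d/2)\|v^{k+1}-v^k\|^2$.

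Subtracting the dual inequality from the primal cancels $F(x^{k+1})-F(x^\star)$ and leaves the cross term $\langle v^k-v^{k+1},Ax^{k+1}-b\rangle+(\rho/2)\|Ax^{k+1}-b\|^2$; completing the square rewrites this as $(\rho/2)\|(v^k-v^{k+1})/\rho+Ax^{k+1}-b\|^2-(1/(2\rho))\|v^{k+1}-v^k\|^2$ and, under $c_p>\beta$ and $c_d>1/\rho$, produces the monotone descent
\[\|z^\star-z^{k+1}\|_\Lambda^2+\tfrac{c_p-\beta}{2}\|x^{k+1}-x^k\|^2+\tfrac{c_d\rho-1}{2\rho}\|v^{k+1}-v^k\|^2+\tfrac{\rho}{2}\bigl\|\tfrac{v^k-v^{k+1}}{\rho}+Ax^{k+1}-b\bigr\|^2 \le \|z^\star-z^k\|_\Lambda^2.\]
Telescoping yields summability of $\|x^{t+1}-x^t\|^2$, $\|v^{t+1}-v^t\|^2$, and $\|(v^t-v^{t+1})/\rho+Ax^{t+1}-b\|^2$. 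To convert to the KKT residual, I extract $g_F^{t+1}\in\partial F(x^{t+1})$ from the primal optimality condition in the form $g_F^{t+1}=[\nabla f(x^{t+1})-\hat g^{t+1}]-c_p(x^{t+1}-x^t)-A^T(v^t+\rho(Ax^{t+1}-b))$; projecting onto $\operatorname{Null}(A)$ eliminates the $A^T$-term, and Lemma~\ref{lemma:smooth_f_hatf} gives $\|P_{N}g_F^{t+1}\|\le(\beta+c_p)\|x^{t+1}-x^t\|$. The triangle inequality $\|Ax^{t+1}-b\|\le\|(v^t-v^{t+1})/\rho+Ax^{t+1}-b\|+\rho^{-1}\|v^{t+1}-v^t\|$ bounds $\|Ax^{t+1}-b\|^2$ by summable quantities, and the $O(1/k)$ average and $o(1/k)$ minimum rates then follow by standard partial-sum arguments.

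For the linear rate under the strengthened hypotheses, I would sharpen the primal inequality using the $\mu$-strong convexity of $f$, which supplies an extra $(\mu/2)\|x^\star-x^{k+1}\|^2$. Assumption~\ref{asm:dual subgradient in range a} forces $v^{k+1}-v^k\in\operatorname{Range}(A)$, and combined with Lemma~\ref{lemma:vstar_in_R(A)} an induction gives $v^k-v^\star\in\operatorname{Range}(A)$ for every $k$, so that $\|A^T(v^k-v^\star)\|\ge\sigma_A\|v^k-v^\star\|$. Balancing these two gains against the cross term via carefully chosen Young's inequalities produces the claimed contraction $\|z^\star-z^{k+1}\|_\Lambda^2\le(1-\alpha)\|z^\star-z^k\|_\Lambda^2$. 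The main obstacle throughout is the cross term $\langle v^k-v^{k+1},Ax^{k+1}-b\rangle$: the complete-the-square identity is essential both for preserving monotonicity of the Lyapunov (otherwise a residual $(\rho/2)\|Ax^{k+1}-b\|^2$ spoils the bound) and for producing the summable surrogate for $\|Ax^{k+1}-b\|^2$; for the linear rate, the delicate tuning of Young's coefficients so that $\alpha\in(0,1)$ holds under only $c_p>2\beta^2/\mu$ and $c_d>1/\rho$ is the principal technical hurdle.
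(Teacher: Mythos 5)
Your sublinear-rate argument is correct and reaches essentially the same descent inequality as the paper by a mildly different route. The paper works on the dual side first (evaluating $L_\rho^k$ at $v^{k+1}$ and $v^\star$), then substitutes the primal stationarity condition to rewrite $\langle v^k,Ax^{k+1}-b\rangle$, and disposes of the cross term $\langle v^{k+1}-v^k,Ax^{k+1}-b\rangle$ by the Young inequality \eqref{eq:AMGM_vA}, which directly leaves the coefficient $\rho/2-1/(2c_d)$ on $\|Ax^{k+1}-b\|^2$ in \eqref{eq:C_BMM}. You instead apply the prox-inequality to both strongly convex subproblems (with the metric $c_pI+\rho A^TA$ on the primal side) and complete the square on the cross term, which preserves $(c_d/2-1/(2\rho))\|v^{k+1}-v^k\|^2$ together with a shifted residual $\tfrac{\rho}{2}\|(v^k-v^{k+1})/\rho+Ax^{k+1}-b\|^2$; you then need one extra triangle inequality to recover summability of $\|Ax^{k+1}-b\|^2$, whereas the paper gets it immediately. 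Both are valid under $c_p>\beta$ and $c_d>1/\rho$, and your extraction of $\|P_Ng_F^{k+1}\|\le(\beta+c_p)\|x^{k+1}-x^k\|$ from the stationarity condition is exactly the paper's.

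For the linear rate your sketch is too loose in two places. First, the claim that the $\mu$-strong convexity of $f$ ``supplies an extra $(\mu/2)\|x^\star-x^{k+1}\|^2$'' in the primal inequality does not follow from your derivation: the subproblem minimizes the model $\hat f^k$ (e.g., a piecewise-linear cutting-plane model), which inherits no strong convexity from $f$, and your primal inequality has already discarded the function-value residual by applying $\hat F^k(x^\star)\le F(x^\star)$ and Lemma \ref{lemma:smooth_f_hatf}. The correct mechanism, as in the paper, is to retain the Bregman-type term $f(x^\star)-f(x^{k+1})-\langle\hat g^{k+1},x^\star-x^{k+1}\rangle$, split $\hat g^{k+1}$ as $\nabla f(x^{k+1})+(\hat g^{k+1}-\nabla f(x^{k+1}))$, and use \eqref{like smooth} with Young's inequality; this yields only $\tfrac{\mu}{4}\|x^\star-x^{k+1}\|^2-\tfrac{\beta^2}{\mu}\|x^{k+1}-x^k\|^2$, and the hypothesis $c_p>2\beta^2/\mu$ is precisely what absorbs that penalty. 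Second, knowing $v^k-v^\star\in\operatorname{Range}(A)$ only reduces the problem to bounding $\|A^T(v^k-v^\star)\|$; you never say how that quantity is controlled. The missing step is to combine the primal stationarity condition of \eqref{eq:BMM-x} with the KKT identity $\nabla f(x^\star)+A^Tv^\star=\mathbf{0}$ to obtain $A^T(v^k-v^\star)=\nabla f(x^\star)-\hat g^{k+1}-c_p(x^{k+1}-x^k)-\rho A^T(Ax^{k+1}-b)$, and then bound each piece by $\|x^{k+1}-x^\star\|$, $\|x^{k+1}-x^k\|$, and $\|Ax^{k+1}-b\|$ using smoothness and \eqref{like smooth} --- exactly the three quantities your strengthened descent inequality controls. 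With those two repairs your outline matches the paper's proof.
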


\begin{proof}
See Appendix \ref{appen:thm_BMM}.
\end{proof}

Theorem \ref{thm:BMM} guarantees the convergence of BMM, but does not cover BMM-D where $c_p=0$. In the next theorem, we present our convergence results for BMM-D. To emphasize, we derive a linear convergence rate for BMM-D without requiring either the strong convexity of $f$ or row/column rank assumptions on $A$. 

\begin{theorem}\label{thm:BMM-D}
    \textbf{(Convergence of BMM-D)} Suppose that Assumptions \ref{asm:prob}, \ref{asm:dual_modelMM} hold, $\rho>0$, and $c_d \geq \frac{1}{\rho}$. Let $\{x^k\}$ be generated by BMM-D (\eqref{eq:MM_x} and \eqref{eq:BMM-D}). Then, for some $g_F^k \in \partial F(x^k)$, $P_N g_F^k = \textbf{0}$, $\lim_{k \to +\infty} Ax^k=b$ and \eqref{eq:BMM_result_sublinear_Ax-b} holds.
    If, in addition, $h(x) \equiv0$ and Assumption \ref{asm:dual subgradient in range a} holds, then
    \begin{align}
        &\|v^k-v^\star\|^2 \leq \frac{1}{(1+\alpha)^k} \|v^0-v^\star\|^2\label{eq:BMMD_v_rate}\\
        &\|\nabla f(x^{k+1})-\nabla f(x^\star)\|^2\le \frac{\beta}{\rho(1+\alpha)^{k-1}}\|v^0-v^\star\|^2\label{eq:BMMD_gra_rate}\\
        &\|Ax^{k+1}-b\|^2 \leq \frac{1}{(1+\alpha)^{k-1}} \cdot\frac{1}{\rho^2}\|v^0-v^\star\|^2\label{eq:BMMD_feas_rate}
 \end{align}
where $\alpha = \frac{1}{2c_d} \min\left(\frac{\sigma_A^2}{\beta},\frac{1}{\rho}\right)$.
\end{theorem}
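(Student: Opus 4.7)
The plan is to handle the KKT-based subgradient claim immediately, then develop a core dual descent inequality that also captures the classical firm nonexpansiveness of MM as a special case, and finally bootstrap a linear rate by an error bound that lives on $\operatorname{Range}(A)$.

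For the subgradient claim, since $x^{k+1}\in\operatorname{arg\,min}_x L_\rho(x,v^k)$, the optimality condition yields a particular $g_F^{k+1}\in\partial F(x^{k+1})$ of the form $g_F^{k+1}=-A^T\tilde{v}^{k+1}$ where $\tilde{v}^{k+1}:=v^k+\rho(Ax^{k+1}-b)$. Thus $g_F^{k+1}\in\operatorname{Range}(A^T)$, and $P_Ng_F^{k+1}=\mathbf{0}$ for every $k\ge 0$.

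For the core descent inequality, I start from the $c_d$-strong convexity of the bundle subproblem, evaluate its optimality at $v^\star$, and combine three ingredients: Assumption \ref{asm:dual_modelMM}(c) (so $\hat{q}_\rho^k(v^\star)\ge q_\rho(v^\star)$); Assumption \ref{asm:dual_modelMM}(b) together with the identity $L_\rho(x^{k+1},v^{k+1})=q_\rho(v^k)+\langle Ax^{k+1}-b,v^{k+1}-v^k\rangle$, which relies crucially on exact primal minimization so that $L_\rho(x^{k+1},v^k)=q_\rho(v^k)$ and $\nabla q_\rho(v^k)=Ax^{k+1}-b$; and the cocoercivity-sharp bound $q_\rho(v^k)-q_\rho(v^\star)\le -\rho\|Ax^{k+1}-b\|^2$ (from $1/\rho$-smoothness of $-q_\rho$, its concavity, and $\nabla q_\rho(v^\star)=\mathbf{0}$). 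Completing the square in $Ax^{k+1}-b$ and $v^{k+1}-v^k$ produces a descent of the form
\[\|v^\star-v^{k+1}\|^2 + \text{positive progress in }\|v^{k+1}-v^k\|^2\text{ and }\|\tilde{v}^{k+1}-v^{k+1}\|^2 \le \|v^\star-v^k\|^2,\]
which collapses to the classical firm nonexpansiveness $\|v^\star-v^{k+1}\|^2+\|v^{k+1}-v^k\|^2\le\|v^\star-v^k\|^2$ in the limit $\hat{q}_\rho^k=L_\rho(x^{k+1},\cdot)$ with $c_d=1/\rho$. Telescoping yields $\sum_k\|Ax^{k+1}-b\|^2<\infty$, from which $Ax^k\to b$ and the $O(1/k)$, $o(1/k)$ feasibility rates in \eqref{eq:BMM_result_sublinear_Ax-b} follow.

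For the linear rate under $h\equiv 0$ and Assumption \ref{asm:dual subgradient in range a}, I first argue by induction that $v^k-v^\star\in\operatorname{Range}(A)$ for every $k$: Lemma \ref{lemma:vstar_in_R(A)} supplies a $v^\star$ with $v^0-v^\star\in\operatorname{Range}(A)$, and the bundle KKT $c_d(v^k-v^{k+1})\in\partial(-\hat{q}_\rho^k)(v^{k+1})\subseteq\operatorname{Range}(A)$ propagates this. In particular $\tilde{v}^{k+1}-v^\star\in\operatorname{Range}(A)$. Combining the KKT identity $\nabla f(x^{k+1})-\nabla f(x^\star)=-A^T(\tilde{v}^{k+1}-v^\star)$, cocoercivity of $\nabla f$ (smooth convex $f$ gives $\|\nabla f(x^{k+1})-\nabla f(x^\star)\|^2\le\beta\langle v^\star-\tilde{v}^{k+1},Ax^{k+1}-b\rangle$), and $\|A^T(\tilde{v}^{k+1}-v^\star)\|\ge\sigma_A\|\tilde{v}^{k+1}-v^\star\|$, I obtain $\sigma_A^2\|\tilde{v}^{k+1}-v^\star\|\le\beta\|Ax^{k+1}-b\|$. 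Splitting via $\|v^k-v^\star\|\le\|\tilde{v}^{k+1}-v^\star\|+\|\tilde{v}^{k+1}-v^k\|=\|\tilde{v}^{k+1}-v^\star\|+\rho\|Ax^{k+1}-b\|$ yields the error bound
\[\|Ax^{k+1}-b\|\ge \tfrac{1}{2}\min\{\sigma_A^2/\beta,\;1/\rho\}\,\|v^k-v^\star\|,\]
which, fed into the core descent inequality, produces the linear contraction $\|v^{k+1}-v^\star\|^2\le\|v^k-v^\star\|^2/(1+\alpha)$ with the stated $\alpha$. The remaining rates \eqref{eq:BMMD_gra_rate}--\eqref{eq:BMMD_feas_rate} then follow by composing this bound with Lipschitzness of $\nabla q_\rho$ (so $\|Ax^{k+1}-b\|\le\|v^k-v^\star\|/\rho$) and with the cocoercivity inequality above (so $\|\nabla f(x^{k+1})-\nabla f(x^\star)\|^2\le(\beta/\rho)\|v^{k-1}-v^\star\|^2$, the one-step index shift accounting for the $k-1$ in the exponent).

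The main obstacle is the linear-rate step: without strong convexity of $f$, the dual $-q_\rho$ is merely smooth and convex, so a textbook proximal-point analysis delivers only sublinear convergence. What unlocks a linear rate is the error bound on $Ax^{k+1}-b$, which requires the $\operatorname{Range}(A)$ confinement from Assumption \ref{asm:dual subgradient in range a} (so that $A^T$ is invertible on the relevant subspace with constant $\sigma_A$) and the conversion of smoothness of $f$ into a Lipschitz-type control on $\nabla f(x^{k+1})-\nabla f(x^\star)$ via the primal KKT identity; the $\min\{\sigma_A^2/\beta,\,1/\rho\}$ structure of $\alpha$ reflects the two competing terms in that triangle-inequality split.
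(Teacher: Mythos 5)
Your treatment of the subgradient claim ($g_F^{k+1}=-A^T\tilde v^{k+1}\in\operatorname{Range}(A^T)$, hence $P_Ng_F^{k+1}=\mathbf{0}$), the $\operatorname{Range}(A)$ induction for $v^k-v^\star$, and the error-bound machinery combining the primal KKT identity with $\sigma_A$ all match the paper's argument. The gap is in your core descent inequality. You bound $-\hat q_\rho^k(v^{k+1})\ge -L_\rho(x^{k+1},v^{k+1})=-q_\rho(v^k)-\langle Ax^{k+1}-b,v^{k+1}-v^k\rangle$ and then invoke the function-value bound at $v^k$. First, the correct constant there is $q_\rho(v^k)-q_\rho(v^\star)\le-\tfrac{\rho}{2}\|Ax^{k+1}-b\|^2$ (from $\tfrac1\rho$-smoothness one gets $\tfrac{1}{2L}\|\nabla\|^2$, not $\tfrac1L\|\nabla\|^2$). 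Second, and fatally, with that correct constant your complete-the-square gives
\begin{equation*}
-\tfrac{\rho}{2}\|u\|^2+\langle u,w\rangle-\tfrac{c_d}{2}\|w\|^2=-\tfrac{1}{2\rho}\|w-\rho u\|^2-\tfrac{c_d-1/\rho}{2}\|w\|^2,\qquad u=Ax^{k+1}-b,\ w=v^{k+1}-v^k,
\end{equation*}
so the only surviving progress terms are $\|v^{k+1}-\tilde v^{k+1}\|^2$ and $(c_d-1/\rho)\|v^{k+1}-v^k\|^2$. At the boundary $c_d=1/\rho$ with the single-cut model (i.e., classical MM, which the theorem explicitly covers and for which the linear rate is advertised as the novel contribution), one has $v^{k+1}=\tilde v^{k+1}$ and \emph{both} progress terms vanish identically: you are left with plain nonexpansiveness $\|v^{k+1}-v^\star\|\le\|v^k-v^\star\|$, which yields neither $\sum_k\|Ax^{k+1}-b\|^2<\infty$ nor a contraction factor, so there is nothing to feed your error bound into. (Your claim that this "collapses to firm nonexpansiveness" is not what the chain delivers; firm nonexpansiveness of the prox step comes from monotonicity, not from the function-value bound you use.)

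The paper avoids this by not stopping at $L_\rho(x^{k+1},v^{k+1})$: it further bounds $-q_\rho(v^k)-\langle\nabla q_\rho(v^k),v^{k+1}-v^k\rangle\ge -q_\rho(v^{k+1})-\tfrac{1}{2\rho}\|v^{k+1}-v^k\|^2$ by smoothness of $-q_\rho$, and then expands $q_\rho(v^{k+1})=L_\rho(x^{k+2},v^{k+1})$ at the \emph{next} exact primal minimizer. After substituting the optimality condition for $x^{k+2}$, the left-hand side becomes a nonnegative Bregman divergence of $F$ plus the full $\tfrac{\rho}{2}\|Ax^{k+2}-b\|^2$, while $c_d\ge 1/\rho$ is only used to discard $(\tfrac{c_d}{2}-\tfrac{1}{2\rho})\|v^{k+1}-v^k\|^2\ge 0$. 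This one-index shift is what preserves the feasibility-residual progress at $c_d=1/\rho$; it also explains why the paper's contraction naturally takes the form $(1+\alpha)\|v^{k+1}-v^\star\|^2\le\|v^k-v^\star\|^2$ (the error bound is expressed through $\nabla f(x^{k+2})$ and $Ax^{k+2}-b$, i.e., through $v^{k+1}$), whereas your version, being anchored at $x^{k+1}$, would at best produce a $(1-\alpha)$-type bound with mismatched indices. You would need to adopt the paper's shift to $x^{k+2}$ (or an equivalent device) to cover the stated parameter range.
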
 

\begin{proof}
See Appendix \ref{appen:thm_BMM-D}
\end{proof}

\begin{remark}
    BMM and BMM-D generalize the linearized MM and MM, respectively, and the convergence rates in Theorems \ref{thm:BMM}--\ref{thm:BMM-D} are of the same order as the linearized MM \cite{xu2017accelerated,xu2021first} and MM \cite{xu2021iteration,liu2019nonergodic}. Moreover, Theorem \ref{thm:BMM-D} also establishes a linear rate for MM, which is novel to the best of our knowledge.
\end{remark}

\section{Numerical Experiment}\label{Section numerical experiment}
We demonstrate the performance of our methods in solving the following constrained regularized least squares problem:
\begin{equation}\label{eq:ls}
\begin{split}
    \underset{x\in\R^n}{\minimize}~~&~F(x) = \frac{1}{2N}\|Px-q\|^2+\lambda_1\|x\|_1+\lambda_2\|x\|_2^2\\
    \st ~&~Ax= b,
\end{split}
\end{equation}
where $N=200$, $n=200$, $\lambda_1=10^{-3}$, and $P\in\R^{N\times n}$, $q\in \R^{N}$, $A\in\R^{150\times 200}$, and $b\in\R^{150}$ are randomly generated data. We set $\lambda_2=0$ and $\lambda_2=1$ to simulate the settings of convex and strongly convex objective functions, respectively.

We run BMM and BMM-D to solve problem \eqref{eq:ls} with $\lambda_2 =0$ (convex objective function), and apply BDA, BDA-D, BMM, and BMM-D to solve problem \eqref{eq:ls} with $\lambda_2=1$ (strongly convex objective function). The experiment settings are as follows: We set $\rho=0.05$ in BMM and BMM-D. Both BDA and BMM adopt the primal cutting-plane model \eqref{eq:cpm}, BDA and BDA-D use the dual cutting-plane model in \eqref{eq:model_q}, and BMM and BMM-D employ the dual cutting-plane model in \eqref{eq:model_qrho}. We set $S_k=[k-m_p+1,k]$ in the primal cutting-plane model, and $S_k=[k-m_d+1,k]$ in the dual cutting-plane models, and take different values of $m_p,m_d$ in the experiments to test their effects. We also test the robustness of the algorithm with respect to the primal and dual step-size parameters $1/c_p$ and $1/c_d$. In all figures, we use $|f(x^k)-f(x^\star)|+ \|Ax^{k}-b\|^2$ as the optimality residual. For the convexity parameter $\mu$ and the smoothness parameter $\beta$ that will be used to determine the algorithmic parameters, we set $\mu = \frac{1}{2N} \lambda_{\min}(P^TP)+\lambda_2,~\beta = \frac{1}{2N} \lambda_{\max}(P^TP)+\lambda_2$, where  $\lambda_{\min}(P^TP)$ and $\lambda_{\max}(P^TP)$ represent the smallest and the largest eigenvalues of $P^TP$, respectively. We compare our methods with the primal-dual gradient method, DA, the linearized MM, and MM, which are specializations of BDA ($m_p=m_d=1$), BDA-D ($m_d=1$), BMM ($m_p=m_d=1$), and BMM-D ($m_d=1$), respectively.

Figs. \ref{fig:BMM-convex} -- \ref{fig:BMM-D} plot the convergence of the algorithms, where subfigure (a) displays the convergence with small step-size parameters, subfigure  (b) corresponds to large step-size parameters, and subfigure (c) plots the final optimality error generated by the methods with different primal and dual step-size parameters after a certain number (denoted by $\text{Iter}$) of iterations. In all figures (convex and strongly convex objective functions, four methods), we make similar observations:
\begin{enumerate}[1)]
    \item When using small step-sizes, the bundle size does not heavily affect the convergence, while methods with larger bundle size can achieve faster convergence by using larger step-sizes (e.g., for $m_p=m_d=10$, final optimality residual $\approx 10$ in Figure \ref{fig:BMM-convex} (a) and $\approx 10^{-3}$ in Figure \ref{fig:BMM-convex} (b)), demonstrating that our bundle scheme could accelerate the primal-dual gradient method, DA, the linearized MM, and MM.
    \item By including more cutting planes in the bundle, our methods could allow for larger primal and dual step-sizes, which eases parameter selection. This demonstrates the enhanced robustness with respect to the primal and dual step-sizes brought by the large bundle size.
    \item When using large step-sizes, increasing the bundle size from $1$ to $5$ can significantly accelerate the convergence, while further increasing it does not bring clear benefits.
\end{enumerate}
In Fig. \ref{fig:BMM}(a), the final accuracy for bundle sizes of $5,10$ is lower than that for the bundle size $1$, which is because we only solve the subproblems \eqref{eq:BMM-x} and \eqref{eq:BMM-D} to a medium accuracy, whereas they have closed-form solutions when the bundle size is $1$.

\begin{center}
\begin{minipage}[h]{1\textwidth}
\begin{figure}[H]
    \begin{subfigure}[t]{0.32\linewidth}
        \includegraphics[width=\linewidth]{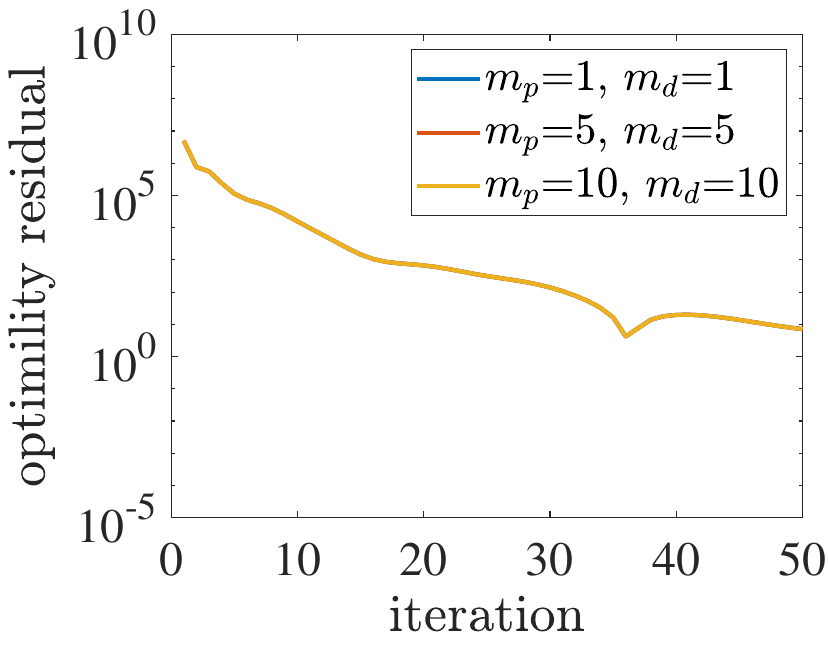}
        \caption{$\frac{1}{c_p} = \frac{1}{\beta},~\frac{1}{c_d}=\rho$}
    \end{subfigure} 
    \hfill
    \begin{subfigure}[t]{0.32\linewidth}
        \includegraphics[width=\linewidth]{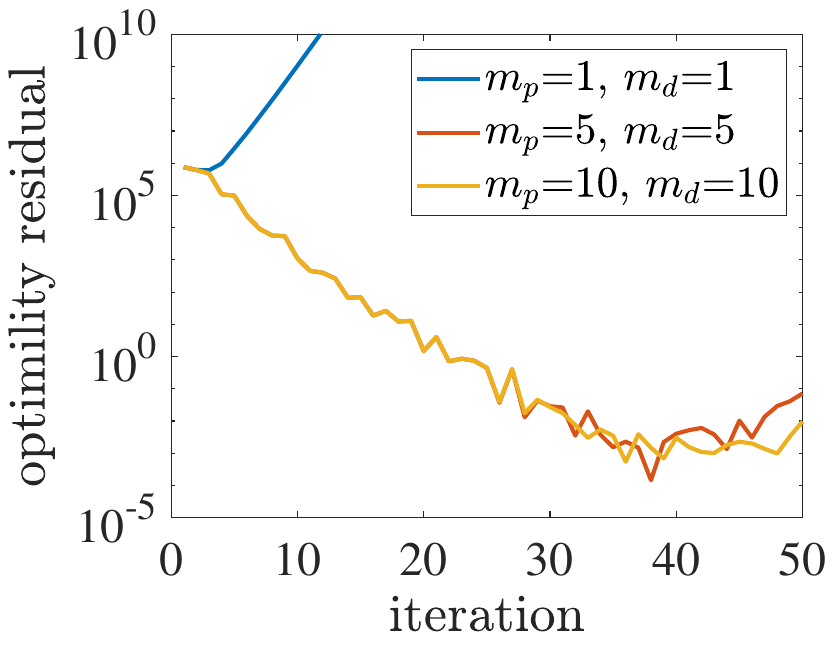}
        \caption{$\frac{1}{c_p} = \frac{4}{\beta},~\frac{1}{c_d}=2\rho$}
    \end{subfigure} 
    \hfill
    \begin{subfigure}[t]{0.32\linewidth}
        \includegraphics[width=\linewidth]{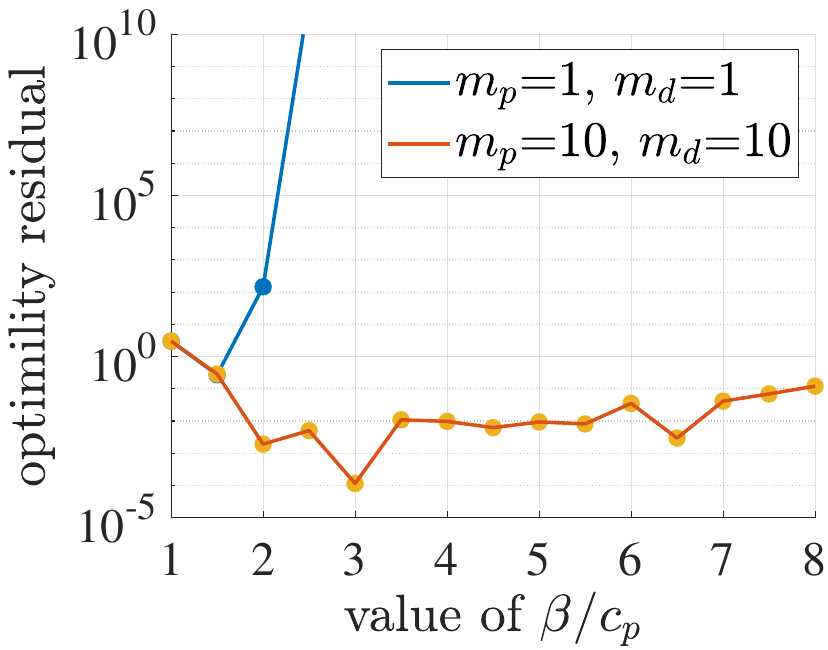}
        \caption{Iter$=50$,~$\frac{1}{c_d} = 2\rho$}
    \end{subfigure} 
    \caption{Convergence performance of BMM (convex)}
    \label{fig:BMM-convex}
\end{figure}
\end{minipage}

\begin{minipage}[h]{1\textwidth}
\begin{figure}[H]
    \centering
    \begin{subfigure}[t]{0.32\linewidth}
        \includegraphics[width=\linewidth]{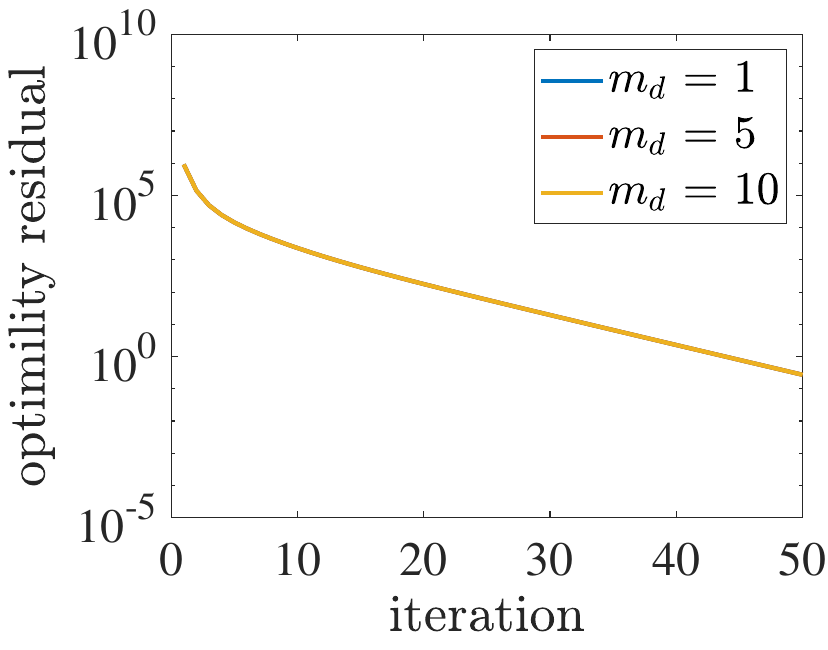}
        \caption{$\frac{1}{c_d}= \rho $}
    \end{subfigure}
    \hfill
    \begin{subfigure}[t]{0.32\linewidth}
        \includegraphics[width=\linewidth]{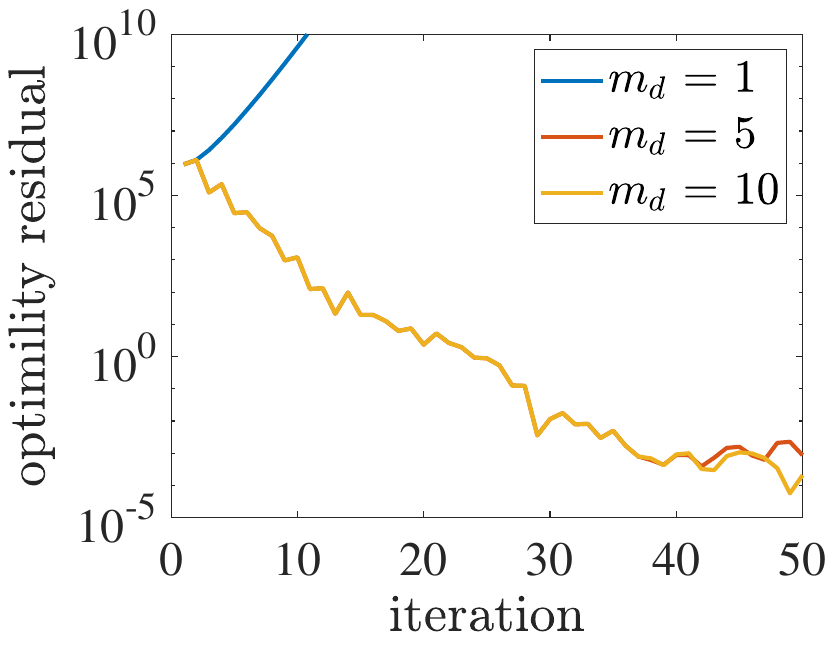}
        \caption{$\frac{1}{c_d} = 3\rho $}
    \end{subfigure}
    \hfill
    \begin{subfigure}[t]{0.32\linewidth}
        \includegraphics[width=\linewidth]{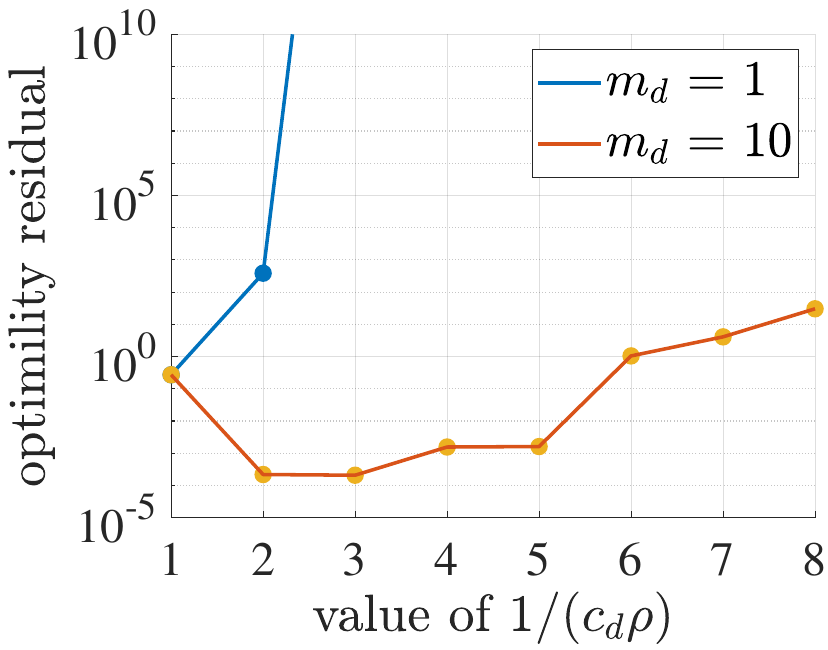}
        \caption{Iter$=50$}
    \end{subfigure}
    \caption{Convergence performance of BMM-D (convex)}
    \label{fig:BMM-D-convex}
\end{figure}
\end{minipage}

\end{center}


\begin{center}
\begin{minipage}{1\textwidth}
\begin{figure}[H]
    \centering
    \begin{subfigure}{0.32\linewidth}
        \includegraphics[width=\linewidth]{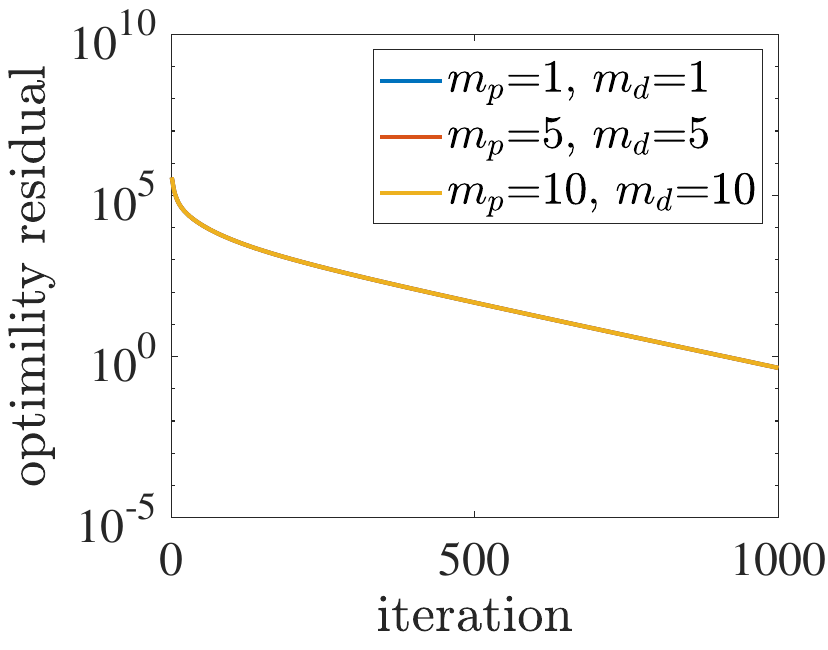}
        \caption{$\frac{1}{c_p} = \frac{1}{\beta},~\frac{1}{c_d}=\frac{\mu}{\|A\|^2}$}
    \end{subfigure}
    \hfill
    \begin{subfigure}{0.32\linewidth}
        \includegraphics[width=\linewidth]{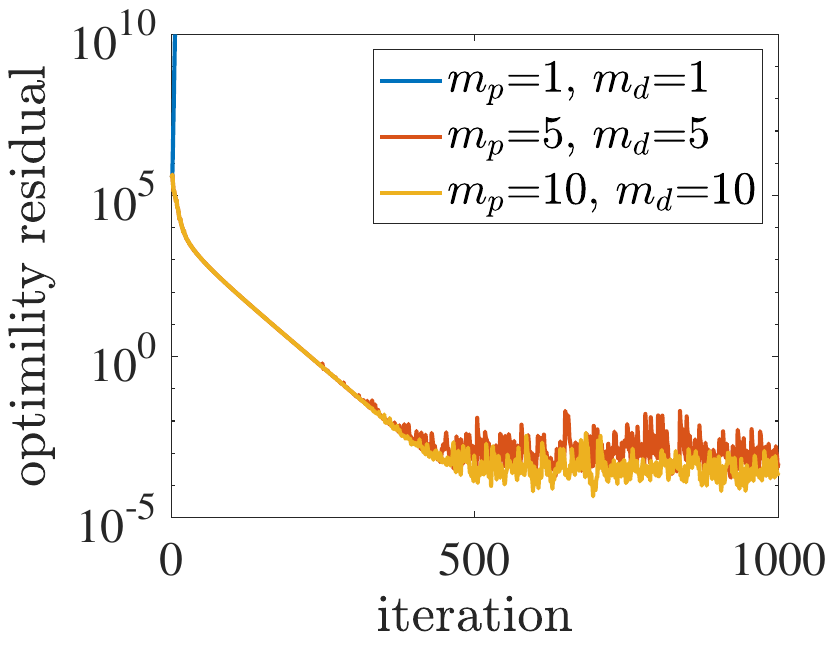}
        \caption{$\frac{1}{c_p} = \frac{3}{\beta},~\frac{1}{c_d}=\frac{4\mu}{\|A\|^2}$}
    \end{subfigure} 
    \hfill
    \begin{subfigure}{0.32\linewidth}
        \includegraphics[width=\linewidth]{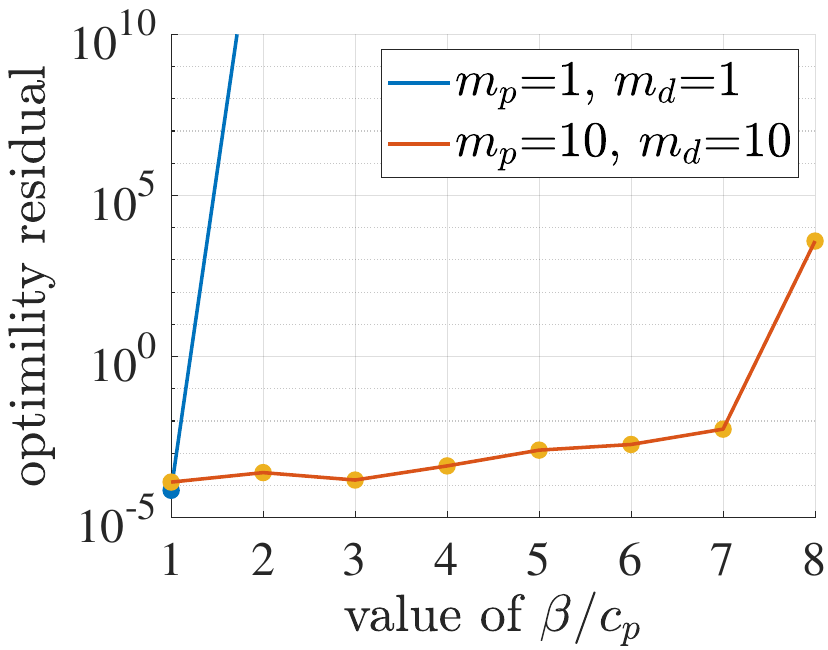}
        \caption{Iter$=1000,\frac{1}{c_d}=\frac{2\mu}{\|A\|^2}$}
    \end{subfigure} 
    \caption{Convergence performance of BDA (strongly convex)}
    \label{fig:BDA}
\end{figure}
\end{minipage}

\begin{minipage}{1\textwidth}
\begin{figure}[H]
    \centering
    \begin{subfigure}{0.32\linewidth}
        \includegraphics[width=\linewidth]{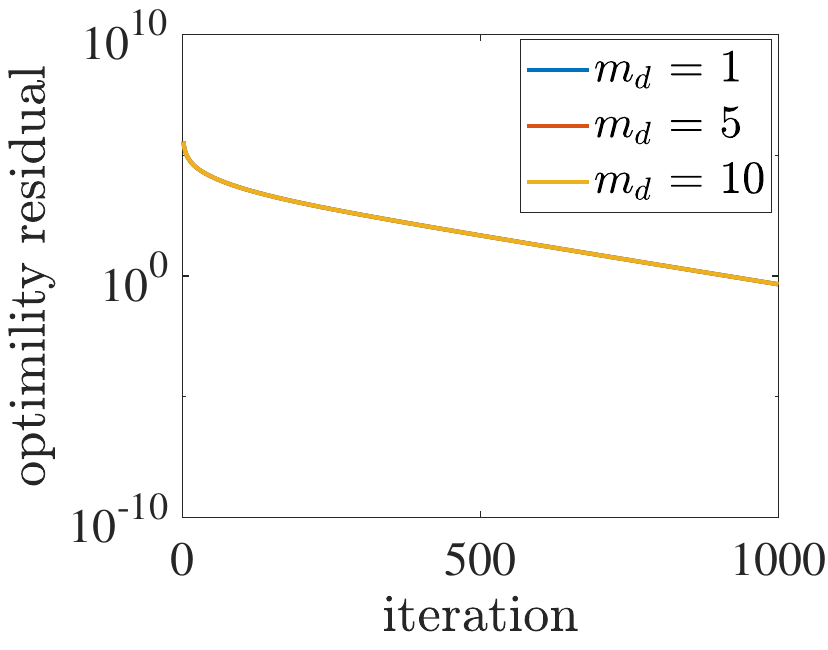}
        \caption{$\frac{1}{c_d}=\frac{\mu}{\|A\|^2}$}
    \end{subfigure}
    \hfill
    \begin{subfigure}{0.32\linewidth}
        \includegraphics[width=\linewidth]{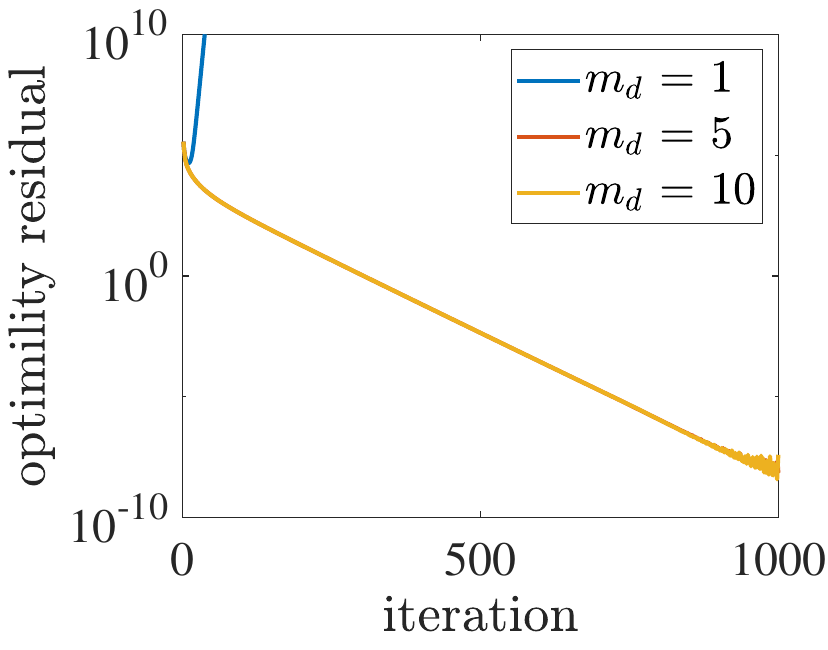}
        \caption{$\frac{1}{c_d}=\frac{3\mu}{\|A\|^2}$}
    \end{subfigure}
    \hfill
    \begin{subfigure}{0.32\linewidth}
         \includegraphics[width=\linewidth]{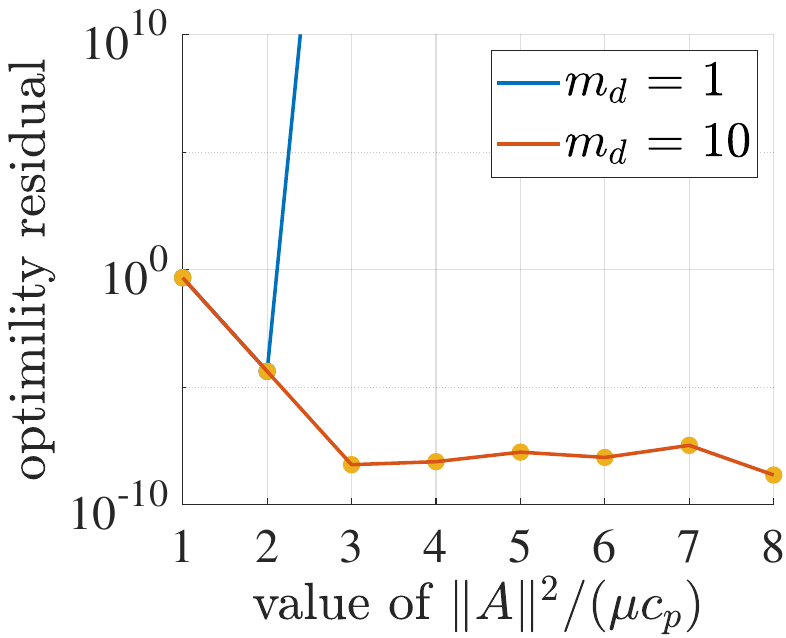}
        \caption{Iter$=1000$}
    \end{subfigure} 
    \caption{Convergence performance of BDA-D (strongly convex)}
    \label{fig:BDA-d}
\end{figure}
\end{minipage}

\begin{minipage}[h]{1\textwidth}
\begin{figure}[H]
    \centering
    \begin{subfigure}[b]{0.32\linewidth}
        \includegraphics[width=\linewidth]{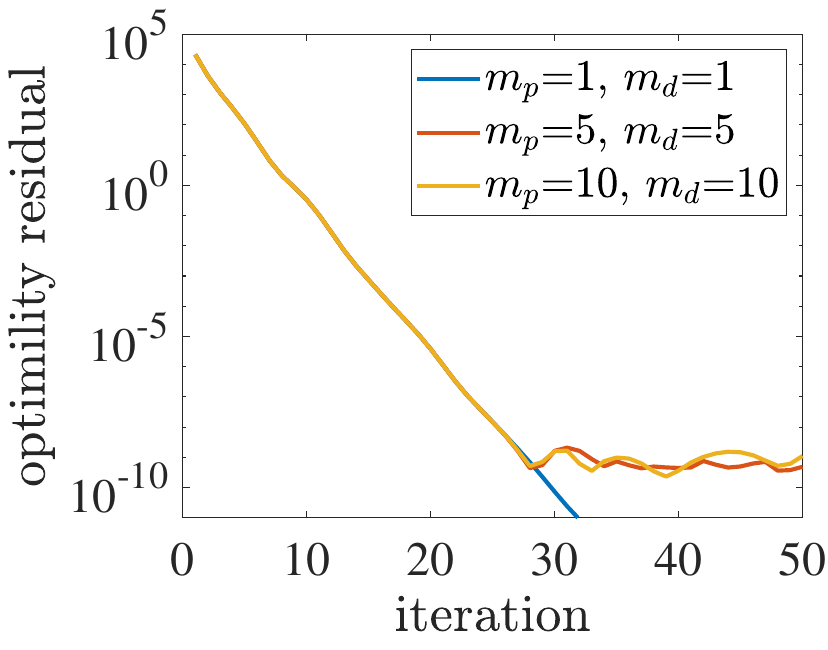}
        \caption{$\frac{1}{c_p} = \frac{1}{\beta},~\frac{1}{c_d}=\rho $}
    \end{subfigure}
    \hfill
    \begin{subfigure}[b]{0.32\linewidth}
        \includegraphics[width=\linewidth]{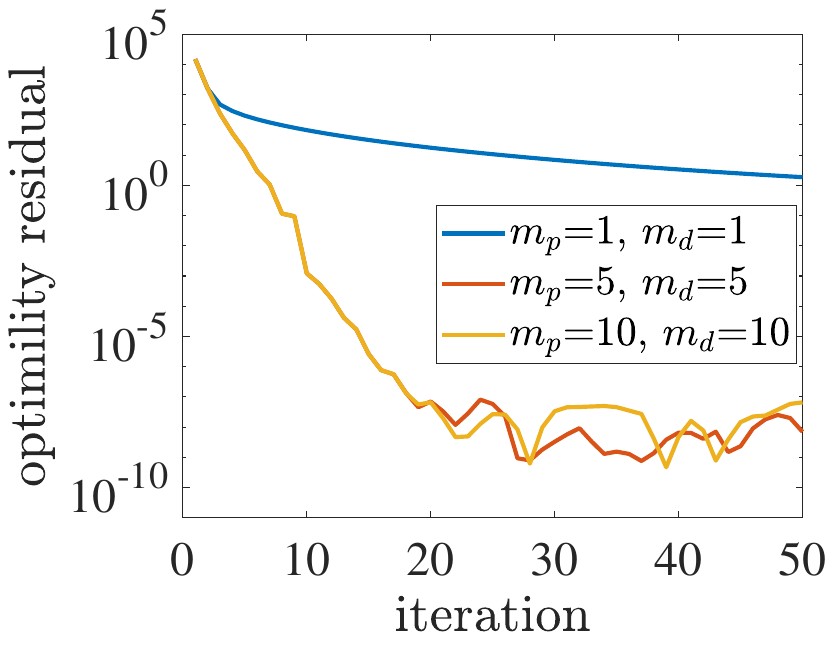}
        \caption{$\frac{1}{c_p} = \frac{2}{\beta},~\frac{1}{c_d}=2\rho $}
    \end{subfigure} 
    \hfill
    \begin{subfigure}[b]{0.32\linewidth}
        \includegraphics[width=\linewidth]{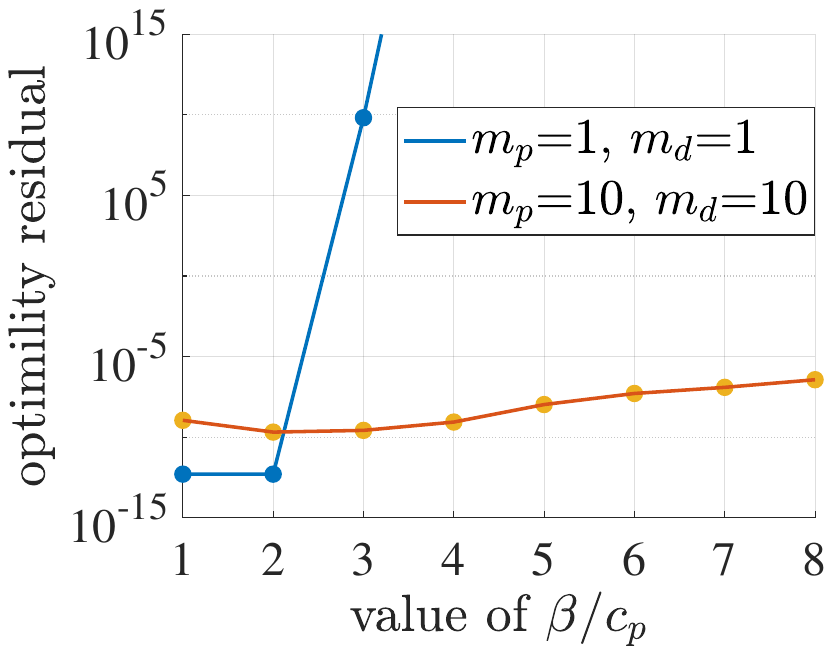}
        \caption{Iter$=50,\frac{1}{c_d} = \rho$}
    \end{subfigure} 
    \caption{Convergence performance of BMM (strongly convex)}
    \label{fig:BMM}
\end{figure}
\end{minipage}

\begin{minipage}[h]{1\textwidth}
\begin{figure}[H]
    \centering
    \begin{subfigure}[b]{0.32\linewidth}
        \includegraphics[width=\linewidth]{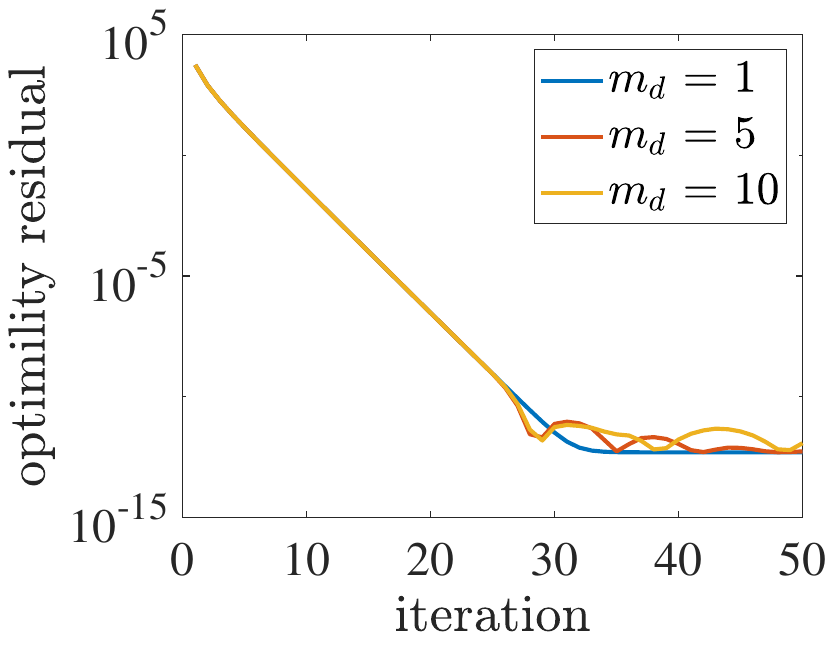}
        \caption{$\frac{1}{c_d}= \rho $}
    \end{subfigure}
    \hfill
    \begin{subfigure}[b]{0.32\linewidth}
        \includegraphics[width=\linewidth]{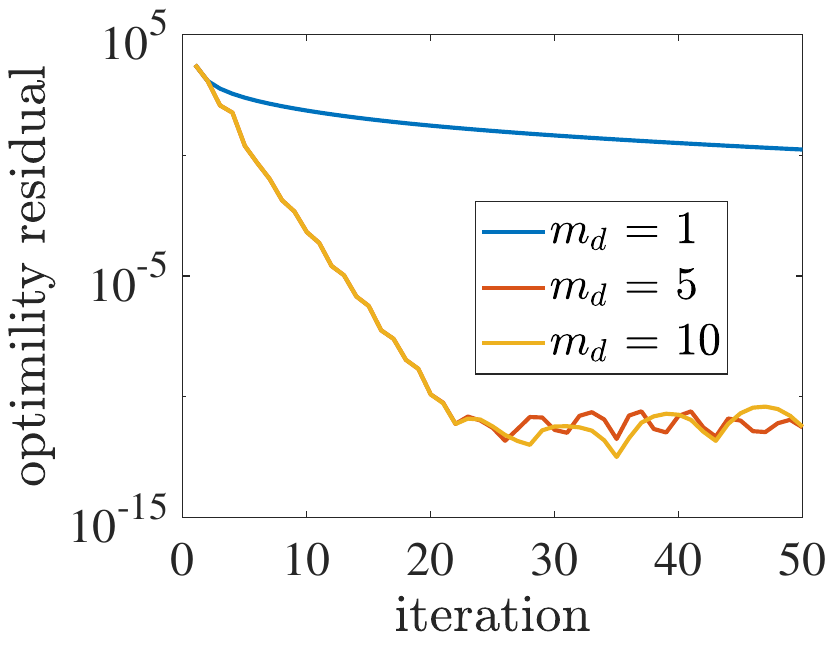}
        \caption{$\frac{1}{c_d} = 2\rho $}
    \end{subfigure}
    \hfill
    \begin{subfigure}[b]{0.32\linewidth}
        \includegraphics[width=\linewidth]{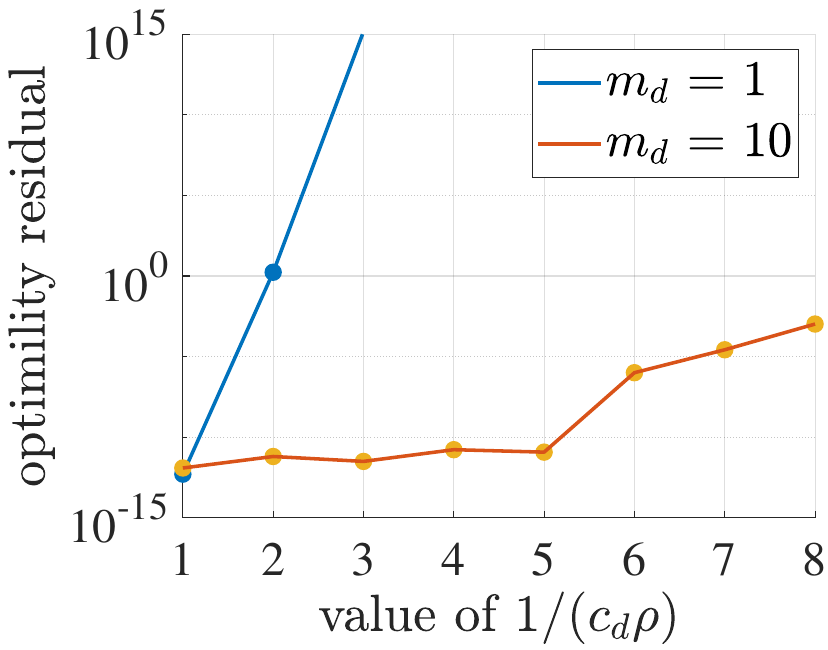}
        \caption{Iter$=50$}
    \end{subfigure}
    \caption{Convergence performance of BMM-D (strongly convex)}
    \label{fig:BMM-D}
\end{figure}
\end{minipage}

\end{center}
\section{Conclusion}\label{sec:conclusion}
We develop BDA and BMM, which adapt the general bundle model to both the primal and dual updates in DA and MM, and improve the performance of algorithms through the high approximation accuracy of the bundle model. BDA and BMM generalize DA, the primal-dual gradient method, MM, and linearized MM. Moreover, the theoretical convergence
results of BDA and BMM are no worse than and sometimes even better
than those of such existing specializations of them in terms
of rate order, problem assumption, etc. The competitive performance of the proposed methods (faster convergence and higher robustness with respect to step-size parameters) over the primal-dual gradient method, DA, the linearized MM, and MM is demonstrated by numerical experiments.

\begin{appendices}
    \section{Proof for assisting lemmas}

\subsection{Proof of Lemma \ref{lem:subproblem_dual}}\label{appen:subproblem_dual}
The Lagrange function of problem \eqref{eq:QP} is
\begin{equation*}
    L(x,t,\lambda) = t + \frac{1}{2}x^TCx + d^Tx + h(x) + \lambda^T(\tilde{A}x+\tilde{b}-t\mb{1}).
\end{equation*}
By the KKT condition,
\begin{equation}\label{eq:simplex}
    1-\mb{1}^T\lambda = \nabla_t L(x,t,\lambda) = 0, \quad \lambda\ge 0.
\end{equation}
Substituting $1-\mb{1}^T\lambda =0$ into the Lagrange function and minimizing the resulting function over $x$ yields
\begin{equation}\label{eq:QP_lagrangian}
    \begin{split}
        g(\lambda) =& \inf_x L(x,t,\lambda) = \inf_x \left\{ \frac{1}{2}x^TCx + d^Tx + h(x) + \lambda^T(\tilde{A}x+\tilde{b})\right\} \\
        =& \inf_x \left\{ h(x) +\frac{1}{2}x^TCx +d^Tx + \lambda^T\tilde{A}x \right\}+\tilde{b}^T\lambda.
    \end{split}
\end{equation}
By \eqref{eq:QP_lagrangian} and \eqref{eq:simplex}, the Lagrange dual problem of problem \eqref{eq:QP} is
\begin{equation*}
    \begin{split}
        \operatorname{maximize}  ~~~& g(\lambda)  \\
        \st ~~& 1-\mb{1}^T\lambda = \nabla_t L(x,t,\lambda) = 0, \quad \lambda\ge 0.
    \end{split}
\end{equation*}
According to the Danskin's theorem \cite{bertsekas1997nonlinear},
\[
    \nabla g(\lambda) = \tilde{A}x_{\lambda}+\tilde{b}.
\]
By the strong duality between problems \eqref{eq:QP} and \eqref{eq:subproblem_dual}, the equivalence between problem \eqref{eq:QP} and \eqref{eq:simplified_primal_prob}, and the strong convexity of the objective function in \eqref{eq:simplified_primal_prob}, we have that $x_{\lambda^\star}$ is the unique optimal solution of problem \eqref{eq:simplified_primal_prob}.

    \subsection{Proof of Lemma \ref{lemma:vstar_in_R(A)}}\label{appen:prop_vstar_in_R(A)}
By Assumption \ref{asm:prob}, there exists $v^\star$ satisfying \eqref{KKT2}. Note that both $v^\star$ and $v^0$ can be decomposed into two parts \cite[Theorem 2.26]{laub2004matrix} \[v^\star = v^\star_R + v^\star_N,\quad v^0 = v^0_R + v^0_N,\] where $v^\star_R,v^0_R \in \operatorname{Range}(A)$ and $v_N^\star,v_N^0\in\operatorname{Null}(A^T)$. Let
\[\tilde{v}^\star = v_N^0+v_R^\star.\]
It holds that
\[
\tilde{v}^\star-v^0 = v_R^\star - v_R^0 \in \operatorname{Range}(A).
\]
Moreover, it is straightforward to see $A^T\tilde{v}^\star=A^Tv^\star$, which, together with \eqref{KKT2}, yields $\mathbf{0}\in \partial F(x^\star)+A^T\tilde{v}^\star$. 

    \subsection{Proof of Lemma \ref{lemma:smooth_f_hatf}}\label{appen:lem_smooth_f_hatf}
    By the smoothness of $f$, we have \[f(x) \leq f(x^k) + \langle \nabla f (x^k),x-x^k \rangle  + \frac{\beta}{2}\|x-x^k\|^2,\] 
    which, together with Assumption \ref{asm:pri_model} (b), results in \eqref{lemma 3.1 1}.

 We prove \eqref{like smooth} by contradiction. Suppose that this fails to hold at some $\tilde{x}$, i.e., for some $\tilde{g} \in \partial \hat{f}^k(\tilde{x}) $, it holds that 
\begin{align}
    \|\tilde{g} - \nabla f(\tilde{x})\| > \beta\|\tilde{x}-x^k\| .\label{2}
\end{align}
By the convexity of $\hat{f}^k$ and \eqref{lemma 3.1 1}, for any $x$,
\begin{equation}\label{3}
\begin{split}
    \hat{f}^k(x) &\geq \hat{f}^k(\tilde{x}) + \langle \tilde{g},x-\tilde{x} \rangle\\
    & \geq f(\tilde{x}) - \frac{\beta}{2}\|\tilde{x}-x^k\|^2 + \langle \tilde{g},x-\tilde{x}\rangle\\
    &= f(\tilde{x}) + \langle \nabla f(\tilde{x}),x-\tilde{x}\rangle + \langle \tilde{g}- \nabla f(\tilde{x}),x-\tilde{x}\rangle - \frac{\beta}{2}\|\tilde{x}-x^k\|^2.
    \end{split}
\end{equation}
Let $x=\tilde{x}+\frac{1}{\beta}(\tilde{g}-\nabla f(\tilde{x}))$. Then, we have
\begin{align*}
    \langle \tilde{g}-\nabla f(\tilde{x}),x-\tilde{x}\rangle &= \frac{1}{2}\langle \tilde{g}-\nabla f(\tilde{x}),x-\tilde{x}\rangle + \frac{1}{2}\langle \tilde{g}-\nabla f(\tilde{x}),x-\tilde{x}\rangle \\
    &= \frac{1}{2\beta}\|\tilde{g}-\nabla f(\tilde{x})\|^2 + \frac{\beta}{2}\|x-\tilde{x}\|^2 \\
    &> \frac{\beta}{2}\|\tilde{x}-x^k\|^2 + \frac{\beta}{2}\|x-\tilde{x}\|^2,
\end{align*}
where the last step is due to \eqref{2}. Substituting the above equation into \eqref{3} gives 
\[
\hat{f}^k(x) > f(\tilde{x}) + \langle \nabla f(\tilde{x}),x-\tilde{x}\rangle + \frac{\beta}{2}\|x-\tilde{x}\|^2 \geq f(x),
\]
which contradicts the assumption that $\hat{f}^k(x) \leq f(x)$ for all $x$.

    \subsection{Proof of Lemma \ref{lemma:mod_sat_asm}}\label{appen:lem_mod_sat_asm}

    \subsubsection{Primal models \eqref{eq:pol_model}--\eqref{eq:two-cut}} 
    
    Since $\hat{f}^k$ in \eqref{eq:pol_model}--\eqref{eq:two-cut} are the maximum of affine functions, they are convex and satisfy Assumption \ref{asm:pri_model} (a). Assumption \ref{asm:pri_model} (b) is straightforward to see from the forms of $\hat{f}^k$ in \eqref{eq:pol_model}--\eqref{eq:two-cut}.  
    
    To show Assumption \ref{asm:pri_model} (c), note that due to the convexity of $f$,
    \[ f(x) \geq  f(x^t) + \langle \nabla f(x^t),x-x^t\rangle, \quad t \in S^k,\]
    i.e., $f(x^t) + \langle \nabla f(x^t),x-x^t\rangle, t \in S^k$ are minorants of $f$. Moreover, $\ell_f\le \min_x f(x)$. Therefore, the models \eqref{eq:pol_model}--\eqref{eq:pcpm} take the maximum of minorants of $f$, which yields $\hat{f}^k(x) \le f(x)$. For $\hat{f}^k$ in \eqref{eq:two-cut}, we show $\hat{f}^k \leq f$ by induction. By the convexity of $f$, the initial model \[\hat{f}^0(x) = f(x^0) + \langle \nabla f(x^0),x-x^0\rangle \leq f(x).\] Assume that for some $k \geq 0$, we have $\hat{f}^k(x) \leq f(x)$. By $f(x)\ge \hat{f}^k(x)$ and the convexity of $\hat{f}^k$, we have that for any $\hat{g}^{k+1}\in\partial\hat{f}^k(x^{k+1})$,
    \[f(x)\ge \hat{f}^k(x) \geq \hat{f}^k(x^{k+1}) + \langle \hat{g}^{k+1},x-x^{k+1}\rangle,\]
    which, together with the convexity of $f$, yields
    \[\hat{f}^{k+1}(x)=\max\{\hat{f}^k(x^{k+1})+\langle \hat{g}^{k+1}, x-x^k\rangle,~f(x^k)+\langle \nabla f(x^k),x-x^k\rangle\} \leq f(x).\]
    Concluding all the above, Assumption \ref{asm:pri_model} (c) holds for all $k\geq 0$.
    
    \subsubsection{Dual models \eqref{eq:model_q} and \eqref{eq:model_qrho}}
    
    Since $\hat{q}^k$ in \eqref{eq:model_q} is the minimum of affine functions, it is concave and satisfies Assumption \ref{asm:dual_modelDA} (a). For Assumption \ref{asm:dual_modelDA} (b), note that 
    \begin{equation}\nonumber
        \begin{split}
            C_q^k(v) &= L(x^{k+1},v^k)+\langle \nabla_v L(x^{k+1},v^k), v-v^k\rangle \\
            &=F(x^{k+1}) + \langle Ax^{k+1}-b,v^k\rangle + \langle Ax^{k+1}-b,v-v^k\rangle \\
            &=F(x^{k+1}) + \langle Ax^{k+1}-b,v\rangle \\
            &=L(x^{k+1},v).
        \end{split}
    \end{equation}
    By the form of $\hat{q}^k$ in \eqref{eq:model_q}, it is straightforward to see 
    $\hat{q}^k(v) \le C_q^k(v) = L(x^{k+1},v)$, i.e., Assumption \ref{asm:pri_model} (b) holds.  
    
    For Assumption \ref{asm:dual_modelDA} (c), since
    \begin{equation}\label{eq:C_q^t_majorant}
        C_q^t(v) = F(x^{t+1}) + \langle Ax^{t+1}-b,v\rangle \geq \inf_x \{ F(x) + \langle Ax-b,v\rangle \} = q(v),
    \end{equation}
    the functions $C_q^t(v),t\in S^k$ are majorants of the dual function $q$. Moreover, $u_{q,\rho}$ is a majorant of $q$. Therefore, the Polyak model, the cutting-plane model, and the Polyak cutting-plane model that take the minimum of majorants of $q$, satisfy $\hat{q}^k(v) \geq q(v)$. For the dual two-cut model, we show $\hat{q}^k(v) \geq q(v)$ by induction. The initial model \[\hat{q}^0(v) = C^0_q(v) = F(x^{1}) + \langle Ax^{1}-b,v\rangle \geq \inf_x \{ F(x) + \langle Ax-b,v\rangle\}=q(v).\]
    Assume that for some $k \ge 0$, $\hat{q}^k(v) \geq q(v)$. Then, by the concavity of $\hat{q}^k(v)$, we have that for any $-\hat{g}_q^{k+1} \in \partial (-\hat{q}^k)(v^{k+1})$,
    \[
        q(v) \leq \hat{q}^k(v) \leq \hat{q}^{k}(v^{k+1}) + \langle \hat{g}_q^{k+1}, v-v^{k+1} \rangle.
    \]
    By \eqref{eq:C_q^t_majorant}, we have $C_q^{k+1}(v) \geq q(v)$, which together with the above equation gives
    \[
    \hat{q}^{k+1}(v)  =  \min\{\hat{q}^{k}(v^{k+1})+\langle \hat{g}_q^{k+1}, v-v^{k+1}\rangle , C_q^{k+1}(v)\} \geq q(v),
    \]
    i.e., $\hat{q}^{k+1}(v)$ is a majorant of $q(v)$. Concluding all the above, Assumption \ref{asm:dual_modelDA} (c) holds for all $k\geq 0$.

    The proof of \eqref{eq:model_qrho} satisfying Assumption \ref{asm:dual_modelMM} is very similar and is therefore omitted.

\subsection{Proof of Lemma \ref{lem:dual_model_subdifferential}} \label{appen:lem_dual_model_subdifferential}
We first consider the dual models in \eqref{eq:model_q}. For the cutting-plane model, by \cite[Proposition 2.54]{Mordukhovich2023}, the subdifferential of the maximum of functions is the convex hull of the union of gradients of the ``active'' functions at $x^{k+1}$, which, together with $b=Ax^\star$, yields
\begin{align*}
    \partial (-\hat{q}^k(v))  
    =& \text{conv} \{\cup_{j\in J_k(v)}(-\nabla_v L(x^{j},v))\} \\
    =& \sum_{j \in J_k(v)} \lambda_j (b-Ax^j) \\
    =&\sum_{j \in J_k(v)} \lambda_j A(x^\star-x^j) \subseteq \operatorname{Range}(A),\nonumber
\end{align*}
where $\lambda_j \geq 0 , \sum_{j \in J_k(v)}\lambda_j = 1$ and $J_k(v) = \{j \mid \hat{q}^k(v) = F(x^j) + \langle v,Ax^j-b \rangle, j \in S^k\}$ is the ``active'' set.

For the Polyak cutting-plane model, the subdifferential can be written as
\begin{align*}
    \partial (-\hat{q}^k(v))  
    =& \text{conv} \{\cup_{j\in J_k(v)}(-\nabla_v L(x^{j},v)),\textbf{0}\}. 
\end{align*}
By $\nabla_v L(x^{j},v) = Ax^j-b = A(x^j-x^\star) \subseteq \operatorname{Range}(A)$ and $\textbf{0} \subseteq \operatorname{Range}(A)$, we have 
\begin{align*}
    \partial (-\hat{q}^k(v))  
    = \text{conv} \{\cup_{j\in J_k}(-\nabla_v L(x^{j},v^{k+1})),\textbf{0}\}  \subseteq \operatorname{Range}(A).
\end{align*}
The Polyak model is a special case of the Polyak cutting-plane model and, therefore, it satisfies \eqref{eq:partial_in_range_A}.

For the dual two-cut model, we show $\partial(-\hat{q}^k(v)) \subseteq \operatorname{Range}(A)$ by induction. The initial model 
\[
\hat{q}^0(v) = C_q^0(v) = F(x^1) + \langle Ax^1-b,v \rangle
\]
and it satisfies
\[
 \partial (-\hat{q}^0(v)) = b- Ax^1 = A(x^\star-x^1) \subseteq \operatorname{Range}(A).
\]
Assume that for some $k\geq 0$, $\partial(-\hat{q}^k(v)) \subseteq \operatorname{Range}(A)$. The two-cut model takes the form of, for any $-\hat{g}_q^{k+1}\in \partial(-\hat{q}_\rho^{k})(v^{k+1})$
\[
\hat{q}^{k+1}(v) = \min \{ \hat{q}^k(v^{k+1}) + \langle \hat{g}_q^{k+1},v-v^{k+1}\rangle, F(x^{k+1}) + \langle Ax^{k+1}-b,v \rangle \}.
\]
Therefore, the subdifferential of $-\hat{q}^{k+1}$ is 
\[
    \partial(-\hat{q}^{k+1}(v)) \subseteq \operatorname{conv}\{-\hat{g}_q^{k+1},b-Ax^{k+1}\}.
\]
By $-\hat{g}_q^{k+1} \in \partial(-\hat{q}^{k})(v^{k+1}) \subseteq \operatorname{Range}(A)$ and $b-Ax^{k+1} = A(x^\star-x^{k+1})\subseteq \operatorname{Range}(A)$, we have
\[ \partial(-\hat{q}^{k+1}(v))\subseteq \operatorname{Range}(A).\]
Concluding all the above, Assumption \ref{asm:dual subgradient in range a} holds for all $k\geq 0$.

The proof for \eqref{eq:model_qrho} is very similar to that for \eqref{eq:model_q} and is therefore omitted.

\section{Proof for BDA and BMM}
\subsection{Proof of Theorem \ref{thm:BDA}}\label{appen:thm_BDA}
Define
\begin{equation*}
    L^k(v) = -\hat{q}^k(v)+ \frac{c_d}{2} \| v - v^k\|^2.
\end{equation*}
Because $L^k(v) - \frac{c_d}{2}\|v\|^2$ is convex and $v^{k+1} = \underset{v}{\operatorname{argmin}}~L^k(v)$, we have
\begin{equation}
L^k(v^{k+1}) - L^k(v^\star) \leq  - \frac{1}{2\rho}\| v^{k+1} - v^\star\|^2. \label{Lk1-Lk2 BDA} 
\end{equation}
By Assumption \ref{asm:dual_modelDA} (b),
\begin{equation}\label{Lk1 BDA}
    \begin{split}
        L^k(v^{k+1}) &= -\hat{q}^k(v^{k+1}) + \frac{c_d}{2}\|v^{k+1}-v^k\|^2 \\
    &\geq -F(x^{k+1}) - \langle v^{k+1},Ax^{k+1}-b\rangle + \frac{c_d}{2}\|v^{k+1}-v^k\|^2 
    \end{split}
\end{equation}
and by Assumption \ref{asm:dual_modelDA} (c) and the strong duality between problem \eqref{problem} and its dual problem, 
\begin{equation}\label{Lk2 BDA}
    \begin{split}
        L^k(v^\star) &= -\hat{q}^k(v^\star) + \frac{c_d}{2}\|v^\star-v^k\|^2\\
    &\leq -q(v^\star) + \frac{c_d}{2}\|v^\star-v^k\|^2 \\
    &= -F(x^\star) + \frac{c_d}{2}\|v^\star-v^k\|^2.
    \end{split}
\end{equation}
Substituting \eqref{Lk1 BDA} and \eqref{Lk2 BDA} into \eqref{Lk1-Lk2 BDA} gives 
\begin{equation}
\begin{split}
    F(x^\star) - F(x^{k+1}) \leq& \frac{c_d}{2}\|v^\star-v^k\|^2 - \frac{c_d}{2}\|v^\star-v^{k+1}\|^2 \\
    &-\frac{c_d}{2}\|v^{k+1}-v^k\|^2 + \langle v^{k+1},Ax^{k+1}-b\rangle. \label{A BDA}
\end{split}
\end{equation}
By the first-order optimality condition of the primal update \eqref{eq:BDA-x}, we have that for some $\hat{g}^{k+1}\in \partial \hat{f}^k(x^{k+1})$ and $s^{k+1}\in \partial h(x^{k+1})$,
\begin{align*}
    \hat{g}^{k+1} +s^{k+1}+ c_p(x^{k+1}-x^k) + A^Tv^k = \mathbf{0},
\end{align*}
which, together with $b=Ax^\star$, leads to
\begin{align*}
    \langle v^{k+1},Ax^{k+1}-b\rangle &= \langle v^{k+1}-v^k,Ax^{k+1}-b\rangle + \langle v^k,Ax^{k+1}-b\rangle \\
    &=\langle v^{k+1}-v^k,Ax^{k+1}-b\rangle + \langle A^Tv^k,x^{k+1}-x^\star\rangle \\
    &=\langle v^{k+1}-v^k,Ax^{k+1}-b\rangle - \langle \hat{g}^{k+1}+s^{k+1}+c_p(x^{k+1}-x^k),x^{k+1}-x^\star\rangle.
\end{align*}
Substituting above equation into \eqref{A BDA} and using $c_p\langle x^k-x^{k+1},x^{k+1}-x^\star\rangle = \frac{c_p}{2}\|x^k-x^\star\|^2 -\frac{c_p}{2}\|x^{k+1}-x^\star\|^2-\frac{c_p}{2}\|x^{k+1}-x^k\|^2$, we can get
\begin{equation}\label{C BDA}
\begin{split}
    &F(x^\star) - F(x^{k+1}) - \langle s^{k+1},x^\star-x^{k+1}\rangle - \langle \hat{g}^{k+1},x^\star-x^{k+1}\rangle\\
    \le&\|z^\star-z^k\|^2_\Lambda -\|z^\star-z^{k+1}\|^2_\Lambda-\|z^{k+1}-z^k\|^2_\Lambda + \langle v^{k+1}-v^k,Ax^{k+1}-b\rangle\\
    =& \|z^\star-z^k\|^2_\Lambda -\|z^\star-z^{k+1}\|^2_\Lambda-\frac{c_p}{2}\|x^{k+1}-x^k\|^2- \frac{c_d}{2}\|v^{k+1}-v^k\|^2\\
    &+ \langle v^{k+1}-v^k,Ax^{k+1}-b\rangle\\
    \le&\|z^\star-z^k\|^2_\Lambda -\|z^\star-z^{k+1}\|^2_\Lambda-\frac{c_p}{2}\|x^{k+1}-x^k\|^2 + \frac{\|A\|^2}{2c_d}\|x^{k+1}-x^\star\|^2
    \end{split}
\end{equation}
where the last inequality comes from
\begin{align}
    &\langle v^{k+1}-v^k,Ax^{k+1}-b\rangle \leq \frac{c_d}{2}\|v^{k+1}-v^k\|^2+\frac{1}{2c_d}\|Ax^{k+1}-b\|^2,\label{eq:AMGM_vA}\\
    &\|Ax^{k+1}-b\|^2=\|A(x^{k+1}-x^\star)\|^2 \leq \|A\|^2\cdot\|x^{k+1}-x^\star\|^2.\nonumber
\end{align}
Due to the convexity of $h$ and $F(x) = f(x)+h(x)$, the left-hand side in \eqref{C BDA} satisfies
\begin{equation}\label{eq:bregman_fh}
\begin{split}
        & F(x^\star)-F(x^{k+1}) - \langle \hat{g}^{k+1},x^\star-x^{k+1}\rangle - \langle s^{k+1},x^\star-x^{k+1}\rangle\\
    \ge & f(x^\star)-f(x^{k+1}) - \langle \hat{g}^{k+1},x^\star-x^{k+1}\rangle\\
    = & f(x^\star)-f(x^{k+1}) - \langle \nabla f(x^{k+1}),x^\star-x^{k+1}\rangle + \langle \nabla f(x^{k+1})-\hat{g}^{k+1},x^\star-x^{k+1}\rangle.
\end{split}
\end{equation}
Using the AM-GM inequality and by \eqref{like smooth}, we have
\begin{equation*}
\begin{split}
    \langle \nabla f(x^{k+1})-\hat{g}^{k+1},x^\star-x^{k+1}\rangle& \ge - \frac{1}{\mu}\|\nabla f(x^{k+1})-\hat{g}^{k+1}\|^2 - \frac{\mu}{4}\|x^\star-x^{k+1}\|^2 \\
    &\ge - \frac{\beta^2}{\mu}\|x^{k+1}-x^k\|^2-\frac{\mu}{4}\|x^\star-x^{k+1}\|^2 ,
\end{split}
\end{equation*}
which, together with the strong convexity of $f$, yields
\begin{equation}\label{B BDA}
\begin{split}
    & f(x^\star)-f(x^{k+1}) - \langle \nabla f(x^{k+1}),x^\star-x^{k+1}\rangle+ \langle \nabla f(x^{k+1})-\hat{g}^{k+1},x^\star-x^{k+1}\rangle\\
    \geq & \frac{\mu}{2} \|x^\star-x^{k+1}\|^2  - \frac{\beta^2}{\mu}\|x^{k+1}-x^k\|^2-\frac{\mu}{4}\|x^\star-x^{k+1}\|^2 \\
    = & \frac{\mu}{4}\|x^\star-x^{k+1}\|^2 - \frac{\beta^2}{\mu}\|x^{k+1}-x^k\|^2.
    \end{split}
\end{equation}
Substituting \eqref{B BDA} into \eqref{eq:bregman_fh} and combining the resulting equation with \eqref{C BDA} gives
\begin{equation}\label{eq:xkxstar_bound}
\begin{split}
    &\left(\frac{\mu}{4}\!-\frac{\|A\|^2}{2c_d}\right)\|x^\star-x^{k+1}\|^2\\
    \le & \|z^\star-z^k\|^2_\Lambda \!-\!\|z^\star-z^{k+1}\|^2_\Lambda \!- \!\left(\frac{c_p}{2}\!-\!\frac{\beta^2}{\mu}\!\right)\|x^{k+1}-x^k\|^2.
\end{split}
\end{equation}
Using telescoping cancellation on the above equation yields that for all $K\ge 0$,
\begin{equation*}\label{summable BDA-PD}
    \left(\frac{\mu}{4}-\frac{\|A\|^2}{2c_d}\right)\sum_{k=0}^K\|x^\star-x^{k+1}\|^2 \le \|z^\star-z^0\|_{\Lambda}^2,
\end{equation*}
which implies $\lim_{k\rightarrow+\infty} \|x^k-x^\star\|=0$ and \eqref{eq:BDA_rate} according to \cite[Proposition 3.4]{shi2015extra}.

Next, we derive the linear convergence rate \eqref{eq:linear_rate_BDA} under the setting of $h(x) \equiv0$. The main idea is to show
\begin{equation}\label{eq:key_BDA_linear}
    \left(\frac{\mu}{4}\!-\frac{\|A\|^2}{2c_d}\right)\|x^\star-x^{k+1}\|^2+\left(\frac{c_p}{2}\!-\!\frac{\beta^2}{\mu}\!\right)\|x^{k+1}-x^k\|^2\ge \alpha\|z^k-z^\star\|_{\Lambda}^2,
\end{equation}
which, together with \eqref{eq:xkxstar_bound}, will give \eqref{eq:BDA_rate}.

From the optimality condition of the primal update \eqref{eq:BDA-x} and KKT condition of \eqref{problem}, we have that for some $\hat{g}^{k+1}\in \partial \hat{f}^k(x^{k+1})$,
\begin{equation}\label{eq:KKT-BDA}
    \hat{g}^{k+1} + c_p(x^{k+1}-x^k) + A^Tv^k = \mathbf{0},\qquad \nabla f(x^\star) + A^Tv^\star = \mathbf{0},
\end{equation}
which, together with the smoothness of $f$ and \eqref{like smooth}, leads to
\begin{equation}\label{eq:AT_vk_vstar}
\begin{split}
    &\|A^T(v^k-v^\star)\|^2\\
    =& \|\nabla f(x^\star)-\hat{g}^{k+1}-c_p(x^{k+1}-x^k)\|^2 \\
    \leq & 2\|\nabla f(x^\star)-\hat{g}^{k+1}\|^2 + 2 c_p^2\|x^{k+1}-x^k\|^2 \\
    \leq& 4\|\nabla f(x^\star)-\nabla f(x^{k+1})\|^2 + 4\|\nabla f(x^{k+1})-\hat{g}^{k+1}\|^2 + 2c_p^2\|x^{k+1}-x^k\|^2 \\
    \overset{\eqref{like smooth}}{\leq} &4\beta^2\|x^\star-x^{k+1}\|^2 + (4\beta^2+2c_p^2)\|x^{k+1}-x^k\|^2.
\end{split}
\end{equation}
By the first-order optimality condition on the dual update \eqref{eq:BDA-D} and \eqref{eq:partial_in_range_A},
\[c_d(v^{k+1}-v^k)\in \partial \hat{q}^k(v^{k+1})\subseteq \operatorname{Range}(A).\]
Moreover, by Lemma \ref{lemma:vstar_in_R(A)}, $v^\star-v^0\in \operatorname{Range}(A)$. Therefore,
\begin{equation}\label{eq:vk_vstar_range}
    v^k - v^\star = v^0-v^\star+\sum_{t=0}^{k-1} (v^{t+1}-v^t) \in \operatorname{Range}(A).
\end{equation}
By \eqref{eq:AT_vk_vstar} and \eqref{eq:vk_vstar_range}, we have
\begin{equation}\label{vstar-vk BDA}
\begin{split}
\|v^\star-v^k\|^2 &\leq \frac{1}{\sigma^2_A}\|A^T(v^k-v^\star)\|^2\\
&\le\frac{1}{\sigma^2_A} (4\beta^2\|x^\star-x^{k+1}\|^2 + (4\beta^2+2c_p^2)\|x^{k+1}-x^k\|^2), 
\end{split}
\end{equation}
where $\sigma_A$ is the smallest non-zero singular value of $A$ and the first inequality uses $\sigma_A^2\|w\|^2 \leq \|A^Tw\|^2$ for all $w\in \operatorname{Range}(A)$ \cite[Lemma 1]{alghunaim2020linear}.

By \eqref{vstar-vk BDA} and
\[
\begin{split}
    \frac{1}{2}\|x^\star-x^k\|^2 = \frac{1}{2}\|(x^\star-x^{k+1})+(x^{k+1}-x^k)\|^2 \le \|x^{k+1}-x^k\|^2 + \|x^\star-x^{k+1}\|^2,
\end{split}
\]
we can get
\begin{equation}\label{linear BDA-PD}
    \begin{split}
    &\frac{c_p}{2}\|x^\star-x^k\|^2 + \frac{c_d}{2}\|v^\star-v^k\|^2 \\
    \leq& (\frac{2\beta^2c_d}{\sigma^2_A}+c_p)\|x^\star-x^{k+1}\|^2+(\frac{c_d(2\beta^2+c_p^2)}{\sigma^2_A} +c_p)\|x^{k+1}-x^k\|^2.
    \end{split}
\end{equation}
Letting 
\[
\alpha' = \min \left( \frac{\frac{\mu}{4}-\frac{\|A\|^2}{2c_d}}{\frac{2\beta^2c_d}{\sigma^2_A}+c_p},\frac{\frac{c_p}{2}-\frac{\beta^2}{\mu}}{\frac{c_d(2\beta^2+c_p^2)}{\sigma^2_A} +c_p}  \right),
\]
it follows from \eqref{linear BDA-PD} that
\begin{equation}
    \begin{split}\label{eq:gamma}
    &(\frac{\mu}{4}-\frac{\|A\|^2}{2c_d})\|x^\star-x^{k+1}\|^2+(\frac{c_p}{2}-\frac{\beta^2}{\mu})\|x^{k+1}-x^k\|^2 \\
    \geq& \alpha'((\frac{2\beta^2c_d}{\sigma^2_A}+c_p)\|x^\star-x^{k+1}\|^2+(\frac{c_d(2\beta^2+c_p^2)}{\sigma^2_A} +c_p)\|x^{k+1}-x^k\|^2)\\
    \geq& \alpha' ( \frac{c_p}{2}\|x^\star-x^{k}\|^2 + \frac{c_d}{2}\|v^\star-v^k\|^2) \\
    =& \alpha' \|z^\star-z^k\|_\Lambda^2,
    \end{split}
\end{equation}
Moreover, $\alpha'$ can be expressed as
\[\alpha' = \frac{\sigma^2_A}{c_dc_p}\min\left(\frac{1/2-\|A\|^2/(\mu c_d)}{4\beta^2/(\mu c_p)+2\sigma^2_A/(\mu c_d)}, \frac{1/2-\beta^2/(\mu c_p)}{2\beta^2/c_p^2+1+\sigma^2_A/(c_pc_d)}\right),\]
and
\[
\begin{split}
& 4\beta^2/(\mu c_p)+2\sigma^2_A/(\mu c_d)\le 2+\frac{\sigma^2_A}{\|A\|^2}\le 3,\\
&2\beta^2/c_p^2+1+\sigma^2_A/(c_pc_d)\le 1/2+1+\sigma^2_A\mu^2/(4\beta^2\|A\|^2)\le 3.
\end{split}
\]
Therefore,
\[\alpha' \ge \frac{\sigma^2_A}{6c_dc_p}\min\left(1-2\beta^2/(\mu c_p), 1-2\|A\|^2/(\mu c_d)\right) = \alpha,\]
which, together with \eqref{eq:gamma}, results in \eqref{eq:key_BDA_linear}.

\subsection{Proof of Theorem \ref{thm:BDA-D}}\label{appen:thm_BDA_D}
Recall that BDA-D is a special case of BDA where $\hat{f}^k=f$ and $c_p=0$, so that the derivations in Appendix \ref{appen:thm_BDA} still hold. However, since $\hat{f}^k=f$, we have $\nabla f(x^{k+1})=\hat{g}^{k+1}$, so that \eqref{B BDA} reduces to
\begin{equation}\label{eq:bregman_f}
\begin{split}
    & f(x^\star)-f(x^{k+1}) - \langle \nabla f(x^{k+1}),x^\star-x^{k+1}\rangle+ \langle \nabla f(x^{k+1})-\hat{g}^{k+1},x^\star-x^{k+1}\rangle\\
    =& f(x^\star)-f(x^{k+1}) - \langle \nabla f(x^{k+1}),x^\star-x^{k+1}\rangle\\
    \ge& \frac{\mu}{2}\|x^\star-x^{k+1}\|^2.
    \end{split}
\end{equation}
The last step is due to the strong convexity of $f$. Substituting \eqref{eq:bregman_f} into \eqref{eq:bregman_fh} and combining the resulting equation with \eqref{C BDA} and $c_p=0$ gives
\begin{equation}\label{summable BDA-D}
\begin{split}
    \left(\frac{\mu}{2}-\frac{\|A\|^2}{2c_d}\right)\|x^\star-x^{k+1}\|^2 \le & \frac{c_d}{2}(\|v^\star-v^k\|^2-\|v^\star-v^{k+1}\|^2).
\end{split}
\end{equation}
Using telescoping cancellation on the above equation yields
\begin{equation*}
    \left(\frac{\mu}{2}-\frac{\|A\|^2}{2c_d}\right)\sum_{k=0}^K\|x^\star-x^{k+1}\|^2 \le \frac{c_d}{2}\|v^\star-v^0\|^2,
\end{equation*}
which implies $\lim_{k\rightarrow+\infty} \|x^k-x^\star\|=0$ and \eqref{eq:BDA_rate} according to \cite[Proposition 3.4]{shi2015extra}.

Next, we derive the linear convergence. Since $\hat{f}^k=f$ and $c_p=0$, equation \eqref{eq:KKT-BDA} reduces to
\begin{equation*}\label{eq:KKT-BDAD}
    \nabla f(x^{k+1}) + A^Tv^k = \mathbf{0},\qquad \nabla f(x^\star) + A^Tv^\star = \mathbf{0},
\end{equation*}
which, together with the smoothness of $f$, leads to
\begin{equation}\label{eq:AT_vk_vstar_BDAD}
\begin{split}
    \|A^T(v^k-v^\star)\|^2=& \|\nabla f(x^\star)-\nabla f(x^{k+1})\|^2\\
    \le &\beta^2\|x^\star-x^{k+1}\|^2.
\end{split}
\end{equation}
By \eqref{eq:AT_vk_vstar_BDAD} and the first step in \eqref{vstar-vk BDA}, we have
\begin{equation*}
\begin{split}
\|v^\star-v^k\|^2 &\leq \frac{1}{\sigma^2_A}\|A^T(v^k-v^\star)\|^2\\
&\le\frac{\beta^2}{\sigma^2_A}\|x^\star-x^{k+1}\|^2. 
\end{split}
\end{equation*}
Substituting the above equation into \eqref{summable BDA-D} gives
\begin{align*}
    \|v^\star-v^{k+1}\|^2 \leq (1-\alpha)\|v^\star-v^{k}\|^2 \le (1-\alpha)^{k+1}\|v^\star-v^0\|^2,
\end{align*}
which, together with \eqref{summable BDA-D}, results in
\begin{align*}
    \frac{1}{2}(\mu-\|A\|^2/c_d)\|x^\star-x^{k+1}\|^2\le\frac{c_d}{2}\|v^\star-v^k\|^2 \leq  \frac{c_d}{2}(1-\alpha)^k \cdot\|v^\star-v^{0}\|^2.
\end{align*}

\subsection{Proof of Theorem \ref{thm:BMM}}\label{appen:thm_BMM}
Define 
\begin{equation*}
    L_\rho^k(v) = -\hat{q}^k_\rho(v)+ \frac{c_d}{2} \| v - v^k\|^2.
\end{equation*}
Because $L^k_\rho(v) - \frac{c_d}{2}\|v\|^2$ is convex and $v^{k+1} = \arg\min_v L^k_\rho(v)$, we have 
\begin{equation}\label{eq:Lk1-Lk2_BMM} 
L^k_\rho(v^{k+1}) - L^k_\rho(v^\star) \leq  - \frac{c_d}{2}\| v^{k+1} - v^\star\|^2. 
\end{equation}
By Assumption \ref{asm:dual_modelMM} (b),
\begin{equation}\label{eq:Lk1_BMM}
    \begin{split}
        L^k_\rho(v^{k+1}) =& -\hat{q}^k(v^{k+1}) + \frac{c_d}{2}\|v^{k+1}-v^k\|^2 \\
    \geq& -F(x^{k+1}) - \langle v^{k+1},Ax^{k+1}-b\rangle -\frac{\rho}{2}\|Ax^{k+1}-b\|^2 \\ &+ \frac{c_d}{2}\|v^{k+1}-v^k\|^2,  
    \end{split}
\end{equation}
and by Assumption \ref{asm:dual_modelMM} (c) and the strong duality between problem \eqref{problem} and its dual problem,
\begin{equation}\label{eq:Lk2_BMM}
    \begin{split}
        L^k_\rho(v^\star) &= -\hat{q}^k_\rho(v^\star) + \frac{c_d}{2}\|v^\star-v^k\|^2 \\
    &\leq -q_\rho(v^\star) + \frac{c_d}{2}\|v^\star-v^k\|^2 \\
    &= -F(x^\star) + \frac{c_d}{2}\|v^\star-v^k\|^2.
    \end{split}
\end{equation}
Substituting \eqref{eq:Lk1_BMM} and \eqref{eq:Lk2_BMM} into \eqref{eq:Lk1-Lk2_BMM} gives 
\begin{equation}\label{eq:A_BMM}
    \begin{split}
        &F(x^\star) - F(x^{k+1}) -\frac{\rho}{2}\|Ax^{k+1}-b\|^2 \\
    \leq& \frac{c_d}{2}\|v^\star-v^k\|^2 - \frac{c_d}{2}\|v^\star-v^{k+1}\|^2 -\frac{c_d}{2}\|v^{k+1}-v^k\|^2 + \langle v^{k+1},Ax^{k+1}-b\rangle \\
    \leq& \frac{c_d}{2}\|v^\star-v^k\|^2 - \frac{c_d}{2}\|v^\star-v^{k+1}\|^2 -\frac{c_d}{2}\|v^{k+1}-v^k\|^2 + \langle v^{k},Ax^{k+1}-b\rangle \\
    &+ \langle v^{k+1}-v^k, Ax^{k+1}-b\rangle.
    \end{split}
\end{equation}
By the first-order optimality condition of the primal update \eqref{eq:BMM-x}, we have that for some $\hat{g}^{k+1} \in \partial\hat{f}^k(x^{k+1})$ and $s^{k+1}\in \partial h(x^{k+1})$,
\begin{equation}\label{eq:BMM_first_order}
    \hat{g}^{k+1} + s^{k+1}+c_p(x^{k+1}-x^k) + A^Tv^k +\rho A^T(Ax^{k+1}-b)= \mathbf{0},
\end{equation}
which, together with $b=Ax^\star$, leads to
\begin{equation}\label{eq:vkAxk1+b}
    \begin{split}
    &\langle v^k,Ax^{k+1}-b\rangle\\
    =&\langle A^Tv^k,x^{k+1}-x^\star\rangle \\
    =&- \langle \hat{g}^{k+1}+s^{k+1}+c_p(x^{k+1}-x^k)+\rho A^T(Ax^{k+1}-b),x^{k+1}-x^\star\rangle\\
    =&-\langle \hat{g}^{k+1}+s^{k+1}+c_p(x^{k+1}-x^k),x^{k+1}-x^\star\rangle- \rho\|Ax^{k+1}-b\|^2\\
    =& -\langle \hat{g}^{k+1}+s^{k+1},x^{k+1}-x^\star\rangle- \rho\|Ax^{k+1}-b\|^2\\
    &+\frac{c_p}{2}\|x^k-x^\star\|^2 -\frac{c_p}{2}\|x^{k+1}-x^\star\|^2-\frac{c_p}{2}\|x^{k+1}-x^k\|^2,
    \end{split}
\end{equation}
where the last step uses
\[c_p\langle x^k-x^{k+1},x^{k+1}-x^\star\rangle = \frac{c_p}{2}\|x^k-x^\star\|^2 -\frac{c_p}{2}\|x^{k+1}-x^\star\|^2-\frac{c_p}{2}\|x^{k+1}-x^k\|^2.\]
Substituting \eqref{eq:vkAxk1+b} into \eqref{eq:A_BMM} and using \eqref{eq:AMGM_vA}, we can get
\begin{equation}\label{eq:C_BMM}
    \begin{split}
    &F(x^\star) - F(x^{k+1}) - \langle s^{k+1}+\hat{g}^{k+1},x^\star-x^{k+1}\rangle  +\frac{\rho}{2}\|Ax^{k+1}-b\|^2 \\
    \leq& \|z^\star-z^k\|^2_\Lambda -\|z^\star-z^{k+1}\|^2_\Lambda-\frac{c_p}{2}\|x^{k+1}-x^k\|^2- \frac{c_d}{2}\|v^{k+1}-v^k\|^2\\
    &+ \langle v^{k+1}-v^k,Ax^{k+1}-b\rangle\\
    \le&\|z^\star-z^k\|^2_\Lambda -\|z^\star-z^{k+1}\|^2_\Lambda-\frac{c_p}{2}\|x^{k+1}-x^k\|^2 + \frac{1}{2c_d}\|Ax^{k+1}-b\|^2.
    \end{split}
\end{equation}
Due to  $F(x) = f(x)+h(x)$ and the convexity of $h$, we have 
\begin{equation*}\label{eq:BMM-Fxstar-Fxk1_ub}
    \begin{split}
    & F(x^\star) - F(x^{k+1}) - \langle s^{k+1}+\hat{g}^{k+1},x^\star-x^{k+1}\rangle\\
    \geq& f(x^\star)-f(x^{k+1}) - \langle \hat{g}^{k+1},x^\star-x^{k+1}\rangle\\
    \overset{\eqref{lemma 3.1 1}}{\geq}& f(x^\star) - \hat{f}^k(x^{k+1}) - \frac{\beta}{2}\|x^{k+1}-x^k\|^2 - \langle \hat{g}^{k+1},x^\star-x^{k+1} \rangle\\
    \geq& \hat{f}^k(x^\star) - \hat{f}^k(x^{k+1})  - \langle \hat{g}^{k+1},x^\star-x^{k+1} \rangle- \frac{\beta}{2}\|x^{k+1}-x^k\|^2\\
    \geq& -\frac{\beta}{2}\|x^{k+1}-x^k\|^2,
    \end{split}
\end{equation*}
where the last two steps use Assumption \ref{asm:pri_model} (c) and Assumption \ref{asm:pri_model} (a), respectively. Substituting above equation into \eqref{eq:C_BMM} gives
\begin{equation}\label{sublinear}
    \begin{split}
    &\frac{c_p-\beta}{2}\|x^{k+1}-x^k\|^2 + (\frac{\rho}{2}-\frac{1}{2c_d})\|Ax^{k+1}-b\|^2  \\
    \leq& \|z^\star-z^k\|^2_\Lambda -\|z^\star-z^{k+1}\|^2_\Lambda .  
    \end{split}
\end{equation}
Since $P_N$ is the projection matrix onto $\operatorname{Null}(A)$, $\operatorname{Range}(A^T)\perp\operatorname{Null}(A)$, and $A^Tv^k, A^T(Ax^{k+1}-b) \in \operatorname{Range}(A^T)$, we know that
\[
    P_NA^Tv^k =P_NA^T(Ax^{k+1}-b)=\mathbf{0}.
\]
Multiply both sides of \eqref{eq:BMM_first_order} by $P_N$ and using the equation above, we can get 
$P_N(\hat{g}^{k+1}+s^{k+1})+c_pP_N(x^{k+1}-x^k) = \mathbf{0}$, which implies
\begin{equation}\label{P BMM-PD}
    \begin{split}
        \|P_N(\hat{g}^{k+1}+s^{k+1})\| &= c_p\|P_N(x^{k+1}-x^k)\| \\
    &\leq c_p \|x^{k+1}-x^k\|, 
    \end{split}
\end{equation}
where the last inequality uses $\|P_N\|_2\leq 1$ ($P_N$ is a projection matrix). Letting $g_F^k=\nabla f(x^k)+s^k$ for all $k\ge 0$, it follows that $g_F^k \in \partial F(x^k)$. By \eqref{like smooth} and \eqref{P BMM-PD},
\begin{equation*}
\begin{split}
    \|P_N g_F^{k+1}\|
    &= \|P_N(\nabla f(x^{k+1})+s^{k+1})\|\\
    &= \|P_N(\nabla f(x^{k+1})-\hat{g}^{k+1})+P_N(\hat{g}^{k+1}+s^{k+1})\|\\
    &\le \|\nabla f(x^{k+1})-\hat{g}^{k+1}\| + \|P_N(\hat{g}^{k+1}+s^{k+1})\|\\
    &\leq (\beta+c_p)\|x^{k+1}-x^k\|,
\end{split}
\end{equation*}
where the second last step uses $\|P_N\|_2\le 1$. Substituting above inequality into \eqref{sublinear} gives
\begin{equation*}\label{sublinear of BMM}
\begin{split}
    &\frac{c_p-\beta}{2(c_p+\beta)^2}\|P_N g_F^{k+1}\|^2+(\frac{\rho}{2}-\frac{1}{2c_d})\|Ax^{k+1}-b\|^2\\
    \le&\|z^\star-z^k\|^2_\Lambda -\|z^\star-z^{k+1}\|^2_\Lambda .
\end{split}
\end{equation*}
Using telescoping cancellation on the above equation yields that for all $K\geq 0$,
\begin{equation}\nonumber
\begin{split}
    &\frac{c_p-\beta}{2(c_p+\beta)^2}\sum_{k=0}^K\|P_N g_F^{k+1}\|^2 \leq \|z^\star-z^0\|_\Lambda^2, \\
    &(\frac{\rho}{2}-\frac{1}{2c_d})\sum_{k=0}^K\|Ax^{k+1}-b\|^2\leq \|z^\star-z^0\|_\Lambda^2,
\end{split}
\end{equation}
which implies $\lim_{k \to +\infty}\|P_Ng_F^k\|=0$ and $\lim_{k \to +\infty}\|Ax^k-b\|=0$ and \eqref{eq:BMM_result_sublinear_P}--\eqref{eq:BMM_result_sublinear_Ax-b} according to \cite[Proposition 3.4]{shi2015extra}. 

Next we prove the linear convergence rate 
\eqref{eq:BMM_result_linear}. By the strong convexity of $f$ and $h(x) \equiv0$, the left-hand side of \eqref{eq:C_BMM} can be rewritten as
\begin{equation}\label{eq:BMM_str_case}
    \begin{split}
    &F(x^\star) - F(x^{k+1}) - \langle s^{k+1}+\hat{g}^{k+1},x^\star-x^{k+1}\rangle  +\frac{\rho}{2}\|Ax^{k+1}-b\|^2\\
    =&f(x^\star)-f(x^{k+1}) - \langle \hat{g}^{k+1},x^\star-x^{k+1}\rangle +\frac{\rho}{2}\|Ax^{k+1}-b\|^2\\
    =&f(x^\star)-f(x^{k+1}) - \langle \nabla f(x^{k+1}),x^\star-x^{k+1}\rangle \\
    &+ \langle \nabla f(x^{k+1})-\hat{g}^{k+1},x^\star-x^{k+1}\rangle +\frac{\rho}{2}\|Ax^{k+1}-b\|^2\\
    \geq& \frac{\mu}{2}\|x^\star-x^{k+1}\|^2 + \langle \nabla f(x^{k+1})-\hat{g}^{k+1},x^\star-x^{k+1}\rangle +\frac{\rho}{2}\|Ax^{k+1}-b\|^2.
    \end{split}
\end{equation}
Using the AM-GM inequality and \eqref{like smooth}, we have
\begin{equation}\nonumber
    \begin{split}
        \langle \nabla f(x^{k+1})-\hat{g}^{k+1},x^\star-x^{k+1}\rangle &\geq -\frac{1}{\mu}\|\nabla f(x^{k+1})-\hat{g}^{k+1}\|^2-\frac{\mu}{4}\|x^\star-x^{k+1}\|^2 \\
        &\geq -\frac{\beta^2}{\mu}\|x^{k+1}-x^k\|^2- \frac{\mu}{4}\|x^\star-x^{k+1}\|^2.
    \end{split}
\end{equation}
Substituting above into \eqref{eq:BMM_str_case} and combining the resulting equation with \eqref{eq:C_BMM} gives
\begin{equation}\label{summable 2 BMM-PD}
    \begin{split}
        &\frac{\mu}{4}\|x^\star-x^{k+1}\|^2 +(\frac{c_p}{2}-\frac{\beta^2}{\mu})\|x^{k+1}-x^k\|^2 + (\frac{\rho}{2}-\frac{1}{2c_d})\|Ax^{k+1}-b\|^2\\
    \leq& \|z^k-z^\star\|^2_\Lambda - \|z^{k+1}-z^\star\|^2_\Lambda .
    \end{split}
\end{equation}
From the first-order optimality condition of the primal update \eqref{eq:BMM-x} and KKT condition of \eqref{problem}, we have that for some $\hat{g}^{k+1} \in \hat{f}^k(x^{k+1})$
\begin{equation}\label{eq:smooth_KKT}
    \hat{g}^{k+1} + c_p(x^{k+1}-x^k) + A^Tv^k + \rho A^T(Ax^{k+1}-b)= \mathbf{0}, \quad \nabla f(x^\star) + A^Tv^\star = \mathbf{0}.
\end{equation}
By \eqref{eq:partial_in_range_A} and the first-order optimality condition on dual update \eqref{eq:BMM-D},
\[c_d(v^{k+1}-v^k)\in \partial \hat{q}_\rho^k(v^{k+1})\subseteq \operatorname{Range}(A),\]
which, together with $v^\star-v^0\in \operatorname{Range}(A)$, implies
\begin{equation}\label{eq:BMM_vk_vstar_range}
    v^k - v^\star = v^0-v^\star+\sum_{t=0}^{k-1} (v^{t+1}-v^t) \in \operatorname{Range}(A).
\end{equation}
Letting $(A^T)^\dag$ be the Moore-Penrose pseudo-inverse of $A^T$, according to \cite[Proposition 4.9.2, Page 318]{petersen2012linear}, $(A^T)^\dag A^T$ is the projection matrix onto $\operatorname{Range}(A)$, which, together with \eqref{eq:BMM_vk_vstar_range} and \eqref{eq:smooth_KKT}, implies
\begin{equation*}
\begin{split}
    v^k - v^\star &= (A^T)^\dag A^T(v^k-v^\star)\\
    &= (A^T)^\dag (\nabla f(x^\star)-\hat{g}^{k+1}-c_p(x^{k+1}-x^k)-\rho A^T(Ax^{k+1}-b))\\
    &= (A^T)^\dag (\nabla f(x^\star)-\hat{g}^{k+1}-c_p(x^{k+1}-x^k))-\rho (Ax^{k+1}-b).
\end{split}
\end{equation*}
By the above equation, the smoothness of $f$, and \eqref{like smooth}, we have
\begin{equation}\label{eq:BMM_Hello}
    \begin{split}
    &\|v^k-v^\star\|^2 \\
    =& \|(A^T)^\dag(\nabla f(x^\star)-\hat{g}^{k+1}-c_p(x^{k+1}-x^k))-\rho (Ax^{k+1}-b)\|^2\\
    =& \|(A^T)^\dag(\nabla f(x^\star)-\nabla f(x^{k+1})\!+\!\nabla f(x^{k+1})\! -\! \hat{g}^{k+1}\!-\!c_p(x^{k+1}-x^k))\!-\!\rho (Ax^{k+1}-b)\|^2\\
    \leq& 4\|(A^T)^\dag(\nabla f(x^\star)-\nabla f(x^{k+1}))\|^2 + 4\|(A^T)^\dag(\nabla f(x^{k+1})-\hat{g}^{k+1})\|^2 \\ &+ 4 c_p^2\|(A^T)^\dag(x^{k+1}-x^k)\|^2+ 4\rho^2\|Ax^{k+1}-b\|^2 \\
        \leq& \frac{4\beta^2}{\sigma_A^2}\|x^{k+1}-x^\star\|^2 + \frac{4(\beta^2+c_p^2)}{\sigma_A^2}\|x^{k+1}-x^k\|^2 + 4\rho^2\|Ax^{k+1}-b\|^2,
    \end{split}
\end{equation}
where the last step uses $\|(A^T)^\dag\|=\frac{1}{\sigma_A}$. Moreover, \[\frac{1}{2}\|x^k-x^\star\|^2 \leq \|x
^{k+1}-x^\star\|^2 + \|x^{k+1}-x^k\|^2,\]
which, together with \eqref{eq:BMM_Hello}, yields
\begin{equation}\label{eq:BMM_Hi}
    \begin{split}
        &\frac{c_p}{2}\|x^k-x^\star\|^2+\frac{c_d}{2}\|v^k-v^\star\|^2 \\
        \leq& (\frac{2\beta^2c_d}{\sigma_A^2}+c_p)\|x^{k+1}-x^\star\|^2 + (\frac{2\beta^2c_d+2c_p^2c_d}{ \sigma_A^2}+c_p)\|x^{k+1}-x^k\|^2 \\
    &+2\rho^2c_d\|Ax^{k+1}-b\|^2.
    \end{split}
\end{equation}
Letting 
\begin{align*}
     \alpha' = 
     \min \{ \frac{\frac{\mu}{4}}{\frac{2\beta^2c_d}{\sigma_A^2}+c_p},\frac{\frac{c_p}{2}-\frac{\beta^2}{\mu}}{\frac{2\beta^2c_d+2c_p^2c_d}{\sigma_A^2}+c_p},\frac{\rho-\frac{1}{c_d}}{4\rho^2c_d}\},  
\end{align*}
it follows from \eqref{eq:BMM_Hi} that 
\begin{equation}\label{eq:three_term_sum}
\begin{split}
        &\frac{\mu}{4}\|x^\star-x^{k+1}\|^2 + (\frac{c_p}{2}-\frac{\beta^2}{\mu})\|x^{k+1}-x^k\|^2+ (\frac{\rho}{2}-\frac{1}{2c_d})\|Ax^{k+1}-b\|^2 \\ 
    \ge &  \alpha' \biggl((\frac{2\beta^2c_d}{\sigma_A^2}+c_p)\|x^{k+1}-x^\star\|^2 + (\frac{2\beta^2c_d+2c_p^2c_d}{ \sigma_A^2}+c_p)\|x^{k+1}-x^k\|^2\\
    &+2\rho^2c_d\|Ax^{k+1}-b\|^2) \biggr)\\
    \ge & \alpha' \left(\frac{c_p}{2}\|x^k-x^\star\|^2+\frac{c_d}{2}\|v^k-v^\star\|^2\right).
\end{split}
\end{equation}
Moreover, $\alpha'$ can be expressed as
\[
\alpha' = \frac{\sigma_A^2}{c_pc_d}\min \left(\frac{1}{\frac{8\beta^2}{c_p\mu}+ \frac{4\sigma_A^2}{c_d\mu}},\frac{1-2\beta^2/(\mu c_p)}{\frac{4\beta^2}{c_p^2}+4+\frac{2\sigma_A^2}{c_d c_p}},\frac{1-1/(\rho c_d)}{\frac{4\rho \sigma_A^2}{c_p}} \right)
\]
Because $c_d>1/\rho$ and $c_p>\frac{2\beta^2}{\mu}$, we have
\begin{align*}
    & \frac{8\beta^2}{c_p\mu} \leq 4,~~\frac{4\sigma_A^2}{c_d\mu}\leq \frac{4\rho \sigma_A^2}{\mu},\\
    & 4\beta^2/c_p^2 \le 4\beta^2 \cdot\frac{\mu^2}{4\beta^4}\le 1,\quad \frac{2\sigma_A^2}{c_dc_p}\le \frac{2\rho \mu \sigma_A^2}{2\beta^2}\le \frac{\rho \sigma_A^2}{\mu},\\
    &\frac{4\rho \sigma_A^2}{c_p} \leq 4\rho \sigma_A^2 \cdot \frac{\mu}{2\beta^2} \leq \frac{2\rho\sigma_A^2}{\mu},
\end{align*}
implying that the denominators in all three terms inside the $\min$ bracket are smaller than or equal to $5(1+\rho\sigma_A^2/\mu)$. Therefore,
\begin{align*}
    \alpha' \ge \frac{1}{5c_pc_d(1/\sigma_A^2+\rho/\mu)}\min \left(1-2\beta^2/(\mu c_p),1-1/(\rho c_d)\right) = \alpha,
\end{align*}
which, together with \eqref{eq:three_term_sum} and \eqref{summable 2 BMM-PD}, results in \eqref{eq:BMM_result_linear}.

\subsection{Proof of Theorem \ref{thm:BMM-D}}\label{appen:thm_BMM-D}

By Assumption \ref{asm:dual_modelMM}(b), we have
\begin{equation}\label{Lk1s}
    \begin{split}
     L^k_\rho(v^{k+1}) =& -\hat{q}_\rho^k(v^{k+1}) + \frac{c_d}{2}\|v^{k+1}-v^k\|^2 \\
    \geq& -q_\rho(v^k) + \langle -\nabla q_\rho(v^k),v^{k+1}-v^k\rangle +\frac{c_d}{2}\|v^{k+1}-v^k\|^2,
    \end{split}
\end{equation}
where $L_\rho^k$ is defined in Appendix \ref{appen:thm_BMM}. Since $F$ is proper, closed, and convex, the negative dual function $-q_\rho(v)=- \inf_x \{ F(x) + \langle v,Ax-b\rangle +\frac{\rho}{2}\|Ax-b\|^2 \}$ is convex and $\frac{1}{\rho}$-smooth \cite[Theorem 1]{li2025smoothnessaugmentedlagrangiandual}, which gives
\[
-q_\rho(v^{k+1}) \leq -q_\rho(v^k) + \langle -\nabla q_\rho(v^k),v^{k+1}-v^k \rangle + \frac{1}{2\rho}\|v^{k+1}-v^k\|^2.
\]
Substituting above equation into \eqref{Lk1s} and using \eqref{eq:MM_x}, we have
\begin{equation}\label{Lk1}
    \begin{split}
     L^k_\rho(v^{k+1}) \geq& -q_\rho(v^k) + \langle -\nabla q_\rho(v^k),v^{k+1}-v^k\rangle +\frac{c_d}{2}\|v^{k+1}-v^k\|^2 \\
    \geq& -q_\rho(v^{k+1}) + (\frac{c_d}{2}-\frac{1}{2\rho})\|v^{k+1}-v^k\|^2  \\
    =& -F(x^{k+2}) - \langle v^{k+1},Ax^{k+2}-b \rangle - \frac{\rho}{2}\|Ax^{k+2}-b\|^2 \\
    &+ (\frac{c_d}{2}-\frac{1}{2\rho})\|v^{k+1}-v^k\|^2.
    \end{split}
\end{equation}
By Assumption \ref{asm:dual_modelMM} (c) and the strong duality between problem \eqref{problem} and its dual problem, we have 
\begin{equation}\label{Lk2}
\begin{split}
    L^k_\rho(v^\star) &= -\hat{q}_\rho^k(v^\star) + \frac{c_d}{2}\|v^\star-v^k\|^2 \\
    &\leq -q_\rho(v^\star) + \frac{c_d}{2}\|v^\star-v^k\|^2 \\
    &= -F(x^\star) + \frac{c_d}{2}\|v^\star-v^k\|^2.
\end{split} 
\end{equation}
Substituting \eqref{Lk1} and \eqref{Lk2} into \eqref{eq:Lk1-Lk2_BMM} gives
\begin{equation}\label{A}
    \begin{split}
        &\quad F(x^\star) -F(x^{k+2}) - \langle v^{k+1},Ax^{k+2}-b \rangle - \frac{\rho}{2}\|Ax^{k+2}-b\|^2  \\
    &\leq \frac{c_d}{2}\|v^\star-v^k\|^2 - \frac{c_d}{2}\|v^\star-v^{k+1}\|^2-(\frac{c_d}{2}-\frac{1}{2\rho})\|v^{k+1}-v^k\|^2. 
    \end{split}
\end{equation}
By the first-order optimality condition of the primal update \eqref{eq:MM_x}, we have that for some $g_F^{k+2}\in \partial F(x^{k+2})$, 
\begin{equation}\label{eq:BMM-D-firsto}
    g_F^{k+2} + \rho A^T(Ax^{k+2}-b) + A^Tv^{k+1} = \mathbf{0},
\end{equation}
which together with $b=Ax^\star$ implies 
\begin{equation}\nonumber
    \begin{split}
        \langle v^{k+1},Ax^{k+2}-b \rangle =& \langle A^Tv^{k+1},x^{k+2}-x^\star \rangle \\
        =& \langle -g_F^{k+2}-\rho A^T(Ax^{k+2}-b),x^{k+2}-x^\star \rangle \\
        =&\langle g_F^{k+2},x^\star-x^{k+2} \rangle - \rho \|Ax^{k+2}-b\|^2.
    \end{split}
\end{equation}
Substituting above equation into \eqref{A} and using $c_d \geq \frac{1}{\rho}$, we have 
\begin{equation}\label{eq:BMM-D_summable_Ax-b}
\begin{split}
    &F(x^\star) - F(x^{k+2}) - \langle g_F^{k+2},x^\star-x^{k+2}\rangle + \frac{\rho}{2}\|Ax^{k+2}-b\|^2 \\
    \leq& \frac{c_d}{2}\|v^\star-v^k\|^2 - \frac{c_d}{2}\|v^\star-v^{k+1}\|^2.
\end{split}
\end{equation}
Using telescoping cancellation on the above equation yields that for all $K\geq 1$,
\begin{equation}\nonumber
   \frac{\rho}{2} \sum_{k=1}^K \|Ax^{k+1}-b\|^2 \leq \frac{c_d}{2}\|v^\star-v^0\|^2,
\end{equation}
which implies $\lim_{k \to +\infty}\|Ax^{k+1}-b\|=0$ and \eqref{eq:BMM_result_sublinear_Ax-b} according to \cite[Proposition 3.4]{shi2015extra}. 
From \eqref{eq:BMM-D-firsto}, we can know that
\begin{equation}\label{eq:BMM-D-g_in_rangeA}
g_F^{k+2} = -\rho A^T(Ax^{k+2}-b) - A^Tv^{k+1} \in \operatorname{Range}(A^T).  
\end{equation}
Since $P_N$ is the projection matrix onto $\operatorname{Null}(A)$ and $\operatorname{Null}(A)\perp \operatorname{Range}(A^T)$, by \eqref{eq:BMM-D-g_in_rangeA} we have
\[
P_Ng_F^{k+2} = \mathbf{0}.
\]

Next, we derive the linear convergence. 
By $h(x) \equiv0$ and the smoothness of $f$, the left-hand side of \eqref{eq:BMM-D_summable_Ax-b} satisfies, 
\begin{equation}\nonumber
    \begin{split}
        &F(x^\star) - F(x^{k+2}) - \langle g_F^{k+2},x^\star-x^{k+2}\rangle + \frac{\rho}{2}\|Ax^{k+2}-b\|^2 \\
        =& f(x^\star) - f(x^{k+2}) -  \langle \nabla f(x^{k+2}),x^\star-x^{k+2}\rangle + \frac{\rho}{2}\|Ax^{k+2}-b\|^2  \\
        \geq & \frac{1}{2\beta}\|\nabla f(x^{k+2})-\nabla f(x^\star)\|^2 + \frac{\rho}{2}\|Ax^{k+2}-b\|^2,
    \end{split}
\end{equation}
where the last step uses \cite[(2.1.10)]{nesterov2018lectures}.
Substituting above equation into \eqref{eq:BMM-D_summable_Ax-b}, gives
\begin{equation}\label{1/k}
    \begin{split}
        &\frac{1}{2\beta}\|\nabla f(x^{k+2})-\nabla f(x^\star)\|^2 + \frac{\rho}{2}\|Ax^{k+2}-b\|^2 \\
        \leq& \frac{c_d}{2}\|v^\star-v^k\|^2 - \frac{c_d}{2}\|v^\star-v^{k+1}\|^2.
    \end{split}
\end{equation}
By the first-order optimality condition of the primal update \eqref{eq:MM_x} and the KKT condition of problem \eqref{problem}, we can get
\begin{equation}\label{eq:BMM-D_opKKT}
    \nabla f(x^{k+2}) +\rho A^T(Ax^{k+2}-b) + A^Tv^{k+1} = \mathbf{0},~~\nabla f(x^\star) + A^Tv^\star = \mathbf{0}.
\end{equation}
Letting $(A^T)^\dag$ be the Moore-Penrose pseudo-inverse of $A^T$, according to \cite[Proposition 29, Page 238]{petersen2012linear}, $(A^T)^\dag A^T$ is the projection matrix onto $\operatorname{Range}(A)$, which, together with \eqref{eq:BMM_vk_vstar_range} and \eqref{eq:BMM-D_opKKT}, implies
\begin{equation*}
    \begin{split}
        v^{k+1}-v^\star =& (A^T)^\dagger A^T(v^{k+1}-v^\star) \\
        =& (A^T)^\dagger (\nabla f(x^\star) - \nabla f(x^{k+2})) - \rho (A^T)^\dagger A^T(Ax^{k+2}-b) \\
        =&(A^T)^\dagger(\nabla f(x^\star) - \nabla f(x^{k+2})) - \rho (Ax^{k+2}-b).
    \end{split}
\end{equation*}
By the above equation, the AM-GM inequality, and $\|(A^T)^\dagger\| = \frac{1}{\sigma_A}$, we have
\begin{equation}\label{eq:BMM-D_ATv}
    \begin{split}
        \frac{c_d}{2}\|v^{k+1}-v^\star\|^2 =& \frac{c_d}{2}\|(A^T)^\dagger(\nabla f(x^\star) - \nabla f(x^{k+2})) - \rho (Ax^{k+2}-b)\|^2 \\
        \leq& c_d\|(A^T)^\dagger(\nabla f(x^\star) - \nabla f(x^{k+2}))\|^2 + c_d\rho^2\|Ax^{k+2}-b\|^2 \\
        \leq& \frac{c_d}{\sigma_A^2}\|\nabla f(x^\star) - \nabla f(x^{k+2})\|^2 + c_d\rho^2\|Ax^{k+2}-b\|^2.
    \end{split}
\end{equation}
By $\alpha = \min (\frac{ \sigma_A^2}{2c_d\beta},\frac{1}{2c_d\rho})$ and \eqref{eq:BMM-D_ATv},
\begin{align*}
    &\quad \frac{1}{2\beta}\|\nabla f(x^{k+2})-\nabla f(x^\star)\|^2 +\frac{\rho}{2}\|Ax^{k+2}-b\|^2\\
    &\geq \alpha \cdot \left( \frac{c_d}{\sigma_A^2}\|\nabla f(x^\star) - \nabla f(x^{k+2})\|^2 + c_d\rho^2\|Ax^{k+2}-b\|^2 \right)\\
    &\geq \alpha \cdot \frac{c_d}{2}\|v^{k+1}-v^\star\|^2.
\end{align*}
Substituting above inequality into \eqref{1/k}, gives
\[
\|v^{k+1}-v^\star\|^2 \leq \frac{1}{(1+\alpha)}\|v^k-v^\star\|^2 \leq \frac{1}{(1+\alpha)^{k+1}}\|v^0-v^\star\|^2,
\]
i.e., \eqref{eq:BMMD_v_rate} holds. By \eqref{eq:BMMD_v_rate} and \eqref{1/k}, we have \eqref{eq:BMMD_gra_rate}--\eqref{eq:BMMD_feas_rate}.

\end{appendices}

\bibliography{sn-bibliography}

\end{document}